\documentclass{article}

\usepackage[colorlinks, citecolor=blue]{hyperref}
\hypersetup{linkcolor=blue,
anchorcolor=blue,
citecolor=blue }

\usepackage{amscd}
\usepackage{amsmath}
\usepackage{amssymb}
\usepackage{amsthm}
\usepackage{dsfont}
\usepackage{mathrsfs}
\usepackage{epsfig}
\usepackage{graphicx,graphics}
\usepackage{multirow}
\usepackage{pstricks}
\usepackage{url}
\usepackage{float}
\usepackage[body={16cm,24cm}, top=2cm]{geometry}
\usepackage{bm}
\usepackage[linewidth=1pt]{mdframed}

\usepackage{enumerate}
\usepackage{cases}

\usepackage[numbers,sort&compress]{natbib}

\allowdisplaybreaks

\newtheorem{theorem}{Theorem}[section]

\newtheorem{definition}[theorem]{Definition}
\newtheorem{remark}[theorem]{Remark}

\newtheorem{assumption}[theorem]{Assumption}
\newtheorem{lemma}[theorem]{Lemma}


\setlength{\parindent}{1.5em}

\pagestyle{plain}

\def \st {\mathrm{ s.t. }}
\def \dom {\mathrm{ dom\ }}
\def \dis {\mathrm{ dist }}

\begin{document}
\title {Effective two-stage image segmentation: a new non-Lipschitz decomposition approach with convergent algorithm}

\author{Xueyan Guo{$^a$}   \qquad Yunhua Xue{$^a$}
	\qquad Chunlin Wu{$^{a}$}\footnote{Corresponding author. Email: wucl@nankai.edu.cn}  \\
	\small  $^a$ School of Mathematical Sciences, Nankai University, Tianjin, China}
\date{}
\maketitle

\textbf{Abstract}
Image segmentation is an important median level vision topic. Accurate and efficient multiphase segmentation for images with intensity inhomogeneity is still a great challenge. We present a new two-stage multiphase segmentation method trying to tackle this, where the key is to compute an inhomogeneity-free approximate image. For this, we propose to use a new non-Lipschitz variational decomposition model in the first stage. The minimization problem is solved by an iterative support shrinking algorithm, with a global convergence guarantee and a lower bound theory of the image gradient of the iterative sequence. The latter indicates that the generated approximate image (inhomogeneity-corrected component) is with very neat edges and suitable for the following thresholding operation. In the second stage, the segmentation is done by applying a widely-used simple thresholding technique to the piecewise constant approximation. Numerical experiments indicate good convergence properties and effectiveness of our method in multiphase segmentation for either clean or noisy homogeneous and inhomogeneous images. Both visual and quantitative comparisons with some state-of-the-art approaches demonstrate the performance advantages of our non-Lipschitz based method.

\vspace{0.5cm}
\noindent \textbf{Keywords} Image segmentation, two-stage, thresholding, intensity inhomogeneity, image decomposition, non-Lipschitz, convergence

\section{Introduction}
\label{intro}
As an important median level vision problem, image segmentation plays a central role in bridging image restoration and various high level applications. It aims to partition an image into several regions so that there are uniform characteristics in each region. Pixel intensity is the most basic and widely used feature for segmentation. There are various approaches for intensity based segmentation and in this paper we focus on the energy minimization methods.

The main challenge of segmentation in the pixel intensity feature space is the influence of intensity inhomogeneity and the need for multiphase segmentation. Although conventional approaches work quite well in certain cases, most of them are still difficult to efficiently handle simultaneously these two problems well with theoretical convergence guarantee. The recent two-stage segmentation approaches \cite{cai2013two,duan2015l_,chang2017new,Chan2018Convex,li2016a,cai2018linkage} have some good characteristics and partially overcome those drawbacks. Meanwhile there are still some shortcomings and further researches are needed. See the related work.

In this paper, we propose a novel two-stage image segmentation method using a continuous but non-Lipschitz decomposition model. Experiments and comparisons to some typical state-of-the-art techniques demonstrate the good performance of our method. The good results benefit from the clean piecewise constant approximate images after the inhomogeneity removal in the first stage. Moreover, our iterative algorithm is proved to be convergent. These advantages are due to the non-Lipschitz regularization we adopted, which has better edge preservation property than total variation and allows to be minimized efficiently with global convergence guarantee.

\subsection{Related work}
Energy minimization methods for image segmentation solve some predefined variational models. There are two types of such methods, i.e., edge-based and region-based. Roughly speaking, edge-based minimization approaches are earlier ones, which involve only the segmentation curve as the unknown into the objective functional. The curve is represented explicitly as a parameterized curve \cite{kass1988snakes} or implicitly as the zero level curve of a level set function \cite{caselles1993geometric,malladi1995shape,caselles1997geodesic}. These objective models are solved usually by gradient descent algorithms, where the curves evolve with an edge detector dependent speed and stop at the boundaries of the detected objects. This type of methods work well for images with sharp and clean edges, but fail to give good results for those with weak edges. As the empty curve set is the trivial global minimizer of the objectives, we need good initializations for them to get good segmentation results. They are also complicated to deal with multiphase segmentation problems.

Region-based energy minimization methods usually simultaneously compute an approximate image of the original and the segmentation curves, and thus can be applied to segment images without edges. Among these, the Mumford-Shah model proposed in \cite{mumford1989optimal} is the most fundamental one, and many others were indeed proposed based on it. The model aims to minimize the following energy functional
\begin{equation}\label{model Mumfor-Shah}
\min_{u,C} \ \int_{\Omega}(f-u)^2d\mathbf{x} + \lambda\int_{\Omega\backslash C}|\nabla u|^2d\mathbf{x} + \mu |C|,
\end{equation}
where $\Omega\subset \mathbb{R}^2$ is the image domain, $f:\Omega\rightarrow\mathbb{R}$ is the given grayscale image, $u:\Omega\rightarrow\mathbb{R}$ is continuous in $\Omega\backslash C$ but may be discontinuous across $C$, and $|C|$ denotes the length of curve $C$. For a given $f$, it gives a piecewise smooth approximation and the separating curves between the smooth pieces. Clearly, dropping any of the three terms in \eqref{model Mumfor-Shah} leads to a trivial and meaningless solution. This minimization problem \eqref{model Mumfor-Shah} is an abstract model and difficult to be solved directly. Lots of efforts are contributed to reformulate or modify it to implementable ones.

A basic type of such efforts is to restrict the approximate image $u$ to be a piecewise constant function \cite{chan2001ieee,vese2002multiphase,chan2006algorithms,pock2008convex, lellmann2009convex,lellmann2009convex1,brown2009convex,brown2010convex,lellmann2011continuous,bae2011global,brown2012completely,li2010variational}. The very interesting Chan-Vese model in \cite{chan2001ieee} considers the two phase case, which reads
\begin{equation}\label{model Chan-Vese}
\min_{c_1,c_2,\phi} \ \lambda_1\int_{\Omega}(f-c_1)^2H(\phi)d\mathbf{x} + \lambda_2\int_{\Omega}(f-c_2)^2(1-H(\phi))d\mathbf{x} + \mu\int_{\Omega}\big|\nabla H(\phi)\big|d\mathbf{x} + \nu\int_{\Omega} H(\phi)d\mathbf{x},
\end{equation}
where the level set function $\phi: \Omega \to \mathbb{R}$ represents the curve and two regions by $C=\{\mathbf{x}\in\Omega:\phi(\mathbf{x})=0\}, inside(C)=\{\mathbf{x}\in\Omega:\phi(\mathbf{x})>0\}, outside(C)=\{\mathbf{x}\in\Omega:\phi(\mathbf{x})<0\}$; $H$ is the Heaviside function; the third term is the length of the curve; the last term is exactly the area of the $inside(C)$ region. The minimization problem in \eqref{model Chan-Vese} was then solved by gradient descent and alternating minimization. This approach was extended to multiphase case in \cite{vese2002multiphase} by using more level set functions. To overcome the numerical difficulties raised by the Heaviside function, some other sophisticated approaches \cite{chan2006algorithms,pock2008convex,lellmann2009convex,lellmann2009convex1,brown2009convex,brown2010convex,lellmann2011continuous, bae2011global,brown2012completely} were proposed by using characteristic functions of interested regions, yielding convex objectives in the case of the optimal constants $c_i$ known a priori, i.e., in the labeling case. All these piecewise constant models work well for approximately homogeneous images, but fail for those with stronger inhomogeneity.

The other class of models assume the approximate image $u$ to be piecewise smooth functions \cite{vese2002multiphase,vese2003multiphase,li2008minimization,chen2017general, li2011level}. Most of them use level set functions to represent the curve and minimize the energies in their level set formulations. In particular, Vese and Chan generalized their piecewise constant models \cite{chan2001ieee} to the piecewise smooth cases in \cite{vese2002multiphase}. The approaches in \cite{vese2003multiphase, chen2017general} assume $u$ to be piecewise polynomials, where only the optimal polynomial coefficients and the level set function need to be computed. A region-scalable local fitting model was proposed in \cite{li2008minimization} by introducing a kernel function into the Chan-Vese model in \cite{chan2001ieee}. In \cite{li2011level}, the authors constructed another classical local fitting model by assuming $u$ to be the product of a piecewise constant and a smooth function, and applied successfully to MRI image segmentation. This idea was recently used to construct a variant of Mumford-Shah model \cite{Liyutong2020} with $TV_p$ regularized region characteristic functions. These piecewise smooth models have shown some abilities to segment inhomogeneous images. However, whether using level set formulation or characteristic functions, they are computationally expensive and have rare convergence results, especially for the multiphase case.

The above reviewed methods, either edge-based, or region-based, are all one-stage methods and obtain the segmentation results in the energy minimization procedure. In contrast, some recently proposed very interesting approaches \cite{cai2013two,duan2015l_,li2016a,chang2017new,Chan2018Convex,cai2018linkage} do the segmentation in two stages. For an input image, they find an approximate image $u$ in the first stage, and threshold $u$ into its constituents by some thresholding approaches in the second stage. These two-stage approaches have some good characteristics. Firstly, as emphasized in \cite{cai2013two}, there is no need to give the number of segments first, any segmentation can be obtained after $u$ is computed in the first stage; those one-stage methods however require a predefined segments number, and if it changes, a new minimization problem needs to be solved. Secondly, the computation is much more efficient than one-stage methods, especially for multiphase cases, because
one-stage methods use multiple level set functions or region characteristic functions as unknown variables to represent the segmentation curves, which make the computation expensive and complicated.

In two-stage methods, the key is to find an approximate image $u$ in the first stage, which is suitable for the following thresholding or clustering operation. As the first two-stage approach, \cite{cai2013two} presented the following convex variant of the Mumford-Shah model
\begin{equation}\label{model_TSMS}
\min_{u} \frac{\lambda}{2} \int_{\Omega}(f-Au)^2d\mathbf{x} + \frac{\mu}{2}\int_{\Omega}|\nabla u|^2d\mathbf{x} + \int_{\Omega}|\nabla u|d\mathbf{x},
\end{equation}
which computes a smooth approximation of the input image for segmentation. This model benefits from its convexity and is very stable for homogeneous image segmentation, but it has limited ability to handle image inhomogeneity. In \cite{li2016a}, the authors proposed an interesting variant of \cite{cai2013two} with a more stable hill-climbing procedure for multi-channel image segmentation. After providing a deep understanding that a partial minimizer of the piecewise constant Mumford-Shah model can be obtained by thresholding the minimizer of the Rudin-Osher-Fatemi(ROF) model in \cite{cai2018linkage}, the authors constructed a so-called thresholded-ROF segmentation tool. In \cite{duan2015l_}, the authors proposed the following $L_0$ gradient regularized Mumford-Shah model
\begin{equation}\label{model_L0MS}
\min_{u,v} \frac{1}{2}\mathcal{E}(u,v) + \alpha\| \nabla u\|_0 + \frac{\mu}{2}\| \nabla v\|^2 + \frac{\gamma}{2}{\Vert v\Vert}^{2},
\end{equation}
by using the same assumption in \cite{li2011level}. In \cite{chang2017new}, the authors improved their previous model \eqref{model_L0MS} in 3D by replacing the discrete Tikhonov regularizer with a high order one and removing the local kernel function in the data fitting term. These $L_0$ related methods can generate piecewise constant approximations, which facilitate providing good segmentation results in the second stage. However, the $L_0$ minimization may generate false edges like isolated speckles for noisy images with strong inhomogeneity, due to its flatness over $(0,+\infty)$. Besides, there is no convergence analysis for the minimization algorithms in \cite{duan2015l_,chang2017new}. In \cite{Chan2018Convex}, Chan et al. proposed the following Convex Non-Convex (CNC) variational segmentation model
\begin{equation}\label{model_CNC}
\min_{u} \mathcal{J}(u;\lambda,T,a):= \frac{\lambda}{2}\|u-f\|^2 + \sum\limits_{i=1}^{N}\phi(\|(\nabla u)_i\|;T,a),
\end{equation}
where $\phi(\cdot;T,a):[0,+\infty)\to\mathbb{R}$ is a parameterized, piecewise defined non-convex penalty function, helping the model to recover images with sharp edges. Under some sufficient conditions on the parameters $\lambda,T,a$ such that the objective functional $\mathcal{J}(\cdot;\lambda,T,a)$ is strictly convex, the minimization algorithm in \cite{Chan2018Convex} is shown convergent. This method works quite well for approximately homogeneous images even with weak edges. However, like \eqref{model_TSMS}, this model does not explicitly consider image inhomogeneity and thus cannot segment strongly inhomogeneous images well.

As can been seen from the literature review, efficient and flexible multiphase segmentation for inhomogeneous images is still a great challenge.
As two-stage methods have some advantages, we follow this line and present a new approach trying to overcome the drawbacks of existing ones.
From the later Figure \ref{fig0_seg}, we see that the key step is to find a piecewise constant inhomogeneity-corrected approximation. This can be done by a decomposition model with appropriate regularizers for different image components, like those used in \cite{chambolle1997image,ng2011total,liang2015retinex,chang2017new} for different imaging applications. To regularize the piecewise constant inhomogeneity-free component, we are inspired by recent advances in non-Lipschitz regularized image restoration. It has been shown for various image restoration problems that continuous non-Lipschitz regularization has extremely good ability for recovering piecewise constant image with neat edges, by both the lower bound theory \cite{nikolova2005analysis,chen2012non,zeng2018edge,zeng2019non} and numerical experiments \cite{chen2012non,bian2015linearly,chen2013optimality,Chao2018An,zeng2019non}. Although non-Lipschitz minimization problems are very difficult to solve, there are some interesting advances, like smooth approximate methods \cite{chen2012non,chen2013optimality,bian2015linearly}, iterative reweighted $\ell_1$ (IRL1) for sparse recovery \cite{foucart2009sparsest,chen2014convergence}, iterative reweighted least squares
(IRLS) \cite{chartrand2008iteratively,daubechies2010iteratively,lai2013improved}, and the recent iterative support shrinking algorithms with proximal linearization (ISSAPL) for different signal and image restoration problems \cite{Chao2018An,zeng2019non,liu2019new,zhenzhe2019}. We mention that, there is so far no works studying continuous but non-Lipschitz regularization for image decomposition problems in the literature, except the just accepted \cite{Wang2020} presenting a nonconvex exponential TV-type Retinex model solved by an alternating minimization with no convergence analysis provided.

\subsection{Our contribution and paper organization}
In this paper we present a new two-stage method by using a continuous but non-Lipschitz decomposition model, trying to integrate the advantages of all existing two-stage methods together with better performance. The contributions can be summarized as follows:
\begin{enumerate}[1.]
	\item In the first stage, we propose a non-Lipschitz decomposition model to compute a piecewise constant inhomogeneity-corrected approximation. The objective function uses a continuous but non-Lipschitz regularizer and a second order discrete Tikhonov regularizer to model the piecewise constant approximation and the intensity inhomogeneity, respectively.
	\item By a motivating analysis, we naturally extend the recent iterative support shrinking algorithms with proximal linearization to solve our non-Lipschitz decomposition model. The global convergence of the algorithm is also established.
	\item To show the effectiveness and advantage of our method for image segmentation, we conduct a series of numerical experiments and compare the results with one typical level set method and some other two-stage segmentation techniques including the TV regularized model, the $L_0$ regularized model and the Convex Non-Convex model.
\end{enumerate}

The rest of the paper is organized as follows. The overview of our two-stage image segmentation method is given in Section \ref{sec_twostage}, including the proposed non-Lipschitz decomposition model in the first stage and the segmentation in the second stage. In Section \ref{sec_algorithm-convergence}, we give the algorithm for solving the decomposition model and establish its global convergence. The numerical experiments and comparisons with the state-of-art methods are given in Section \ref{sec_experiment}. We conclude the paper in Section \ref{sec_conclusions}.

\subsection*{Notations}\label{sec_notations}
Without loss of generality, we represent a grayscale image as an $n\times n$ matrix $\mathbf{u}$. Denote by $\mathcal{V}$ the linear space $\mathbb{R}^{n\times n}$. $\mathbf{J}:=\{(i,j):1\leq i,j\leq n\}$ is the set of the indices of all pixels. The horizontal and vertical discrete forward and backward difference operators are defined as follows
\begin{align*}
\begin{cases}
({\mathcal{D}_{+}^{x}} \mathbf{u})_{i,j}=\mathbf{u}_{i,j+1}-\mathbf{u}_{i,j}, & 1\leq i\leq n, 1\leq j\leq n, \\
({\mathcal{D}_{-}^{x}} \mathbf{u})_{i,j}=\mathbf{u}_{i,j}-\mathbf{u}_{i,j-1}, & 1\leq i\leq n, 1\leq j\leq n, \\
({\mathcal{D}_{+}^{y}} \mathbf{u})_{i,j}=\mathbf{u}_{i+1,j}-\mathbf{u}_{i,j}, & 1\leq i\leq n, 1\leq j\leq n, \\
({\mathcal{D}_{-}^{y}} \mathbf{u})_{i,j}=\mathbf{u}_{i,j}-\mathbf{u}_{i-1,j}, & 1\leq i\leq n, 1\leq j\leq n,
\end{cases}
\end{align*}
with periodic boundary conditions.
The discrete gradient operator is a mapping $\mathcal{D}$: $\mathcal{V} \longrightarrow  \mathcal{V} \times \mathcal{V}$ by $\forall \mathbf{u}, \mathcal{D}\mathbf{u} = (\mathcal{D}_{+}^x \mathbf{u},\mathcal{D}_{+}^y \mathbf{u})$. Clearly, the adjoint operator of $\mathcal{D}$ is $\mathcal{D}^T$: $\mathcal{V} \times \mathcal{V} \longrightarrow  \mathcal{V}$, written as $\mathcal{D}^T(\mathbf{p}^1,\mathbf{p}^2)=-{\mathcal{D}_{-}^{x}}\mathbf{p}^1-{\mathcal{D}_{-}^{y}}\mathbf{p}^2$. The discrete Hessian operator is a mapping $\mathcal{H}$: $V \longrightarrow  \mathcal{V} \times \mathcal{V} \times \mathcal{V} \times \mathcal{V}$, defined as
$$\mathcal{H}\mathbf{u} = \begin{pmatrix} {\mathcal{D}_{-}^{x}}\mathcal{D}_{+}^x \mathbf{u} & \mathcal{D}_{+}^x\mathcal{D}_{+}^y \mathbf{u} \\ \mathcal{D}_{+}^y\mathcal{D}_{+}^x \mathbf{u} & {\mathcal{D}_{-}^{y}}\mathcal{D}_{+}^y \mathbf{u} \end{pmatrix}.$$
Similarly, under the periodic boundary condition, the adjoint operator of $\mathcal{H}$ is $\mathcal{H}^T$: $\mathcal{V} \times \mathcal{V} \times \mathcal{V} \times \mathcal{V} \longrightarrow  \mathcal{V}$, which reads
$$\mathcal{H}^T\begin{pmatrix} \mathbf{p}^{11} & \mathbf{p}^{12} \\ \mathbf{p}^{21} & \mathbf{p}^{22} \end{pmatrix}={\mathcal{D}_{+}^{x}}\mathcal{D}_{-}^x \mathbf{p}^{11} + {\mathcal{D}_{-}^{y}}{\mathcal{D}_{-}^{x}} \mathbf{p}^{12} + {\mathcal{D}_{-}^{x}}{\mathcal{D}_{-}^{y}} \mathbf{p}^{21} + \mathcal{D}_{+}^{y}\mathcal{D}_{-}^{y} \mathbf{p}^{22}.$$
For more details, see, e.g., \cite{wang2008new,wu2010augmented} and the references therein.

For convenience of description in theoretical analysis, we also use another representation, which rearranges column by column an image $\mathbf{u} \in \mathcal{V}$ into a 1D vector $u\in\mathbb{R}^N, N=n^2$, like \cite{chen2012non,zeng2018edge,Chao2018An,zeng2019non}. Thus the corresponding index set of all pixels is $J:=\{k:1\leq k\leq N\}$. There is a one-to-one correspondence between $J$ and $\mathbf{J}$: $k \leftrightarrows (i,j)$, where $k \in J$ and $(i,j) \in \mathbf{J}$. Using this correspondence, one can reformulate those above discrete difference operators for $u \in\mathbb{R}^N$, which are denoted as $D$, $H$, etc. For instance,
$$
(D_+^x u)_k=(\mathcal{D}_{+}^x \mathbf{u})_{i,j}=
\begin{cases}
u_{k+n}-u_{k}, & 1\leq \lceil k/n \rceil \leq n-1, \\
u_{k-n(n-1)}-u_{k}, & \lceil k/n \rceil = n,
\end{cases}
$$
$$
(D_+^y u)_k=(\mathcal{D}_{+}^y \mathbf{u})_{i,j}=
\begin{cases}
u_{k+1}-u_{k}, & 1\leq k\bmod n \leq n-1, \\
u_{k-n+1}-u_{k}, & k\bmod n = 0,
\end{cases}
$$
where $D_+^x$, $D_+^y  \in \mathbb{R}^{N\times N}$. The discrete gradient operator for  $u \in\mathbb{R}^N$ is $D=\begin{pmatrix} D_+^x \\ D_+^y \end{pmatrix} \in \mathbb{R}^{2N\times N}$. $D_-^x$, $D_-^y$ can be similarly defined. Thus the discrete Hessian operator for $u \in\mathbb{R}^N$ is
$H = \begin{pmatrix} D_{-}^{x}D_{+}^x \\ D_{+}^xD_{+}^y \\ D_{+}^yD_{+}^x \\ D_{-}^{y}D_{+}^y \end{pmatrix} \in \mathbb{R}^{4N\times N}$. We also denote $D_k=\begin{pmatrix} D_+^x(k,:) \\ D_+^y(k,:) \end{pmatrix} \in \mathbb{R}^{2 \times N}$.

To simplify the notation, the $\ell_2$-norm $\|\cdot\|_2$ of a vector is abbreviated as $\|\cdot\|$ in the main body of this paper.
For $V_1 \subseteq \mathbb{R}^N, V_2 \subseteq \mathbb{R}^N$, the tensor product of $V_1$ and $V_2$  reads as $V_1 \otimes V_2 :=\left\{ (v_1,v_2):v_1\in V_1, v_2\in V_2 \right\}$. We also use $\# \Lambda$ to denote the cardinality of a set $\Lambda$. The kernel of $D_k$ is written as $\ker D_k=\{u\in \mathbb{R}^{N}: \Vert D_k u\Vert=0\}$. Given $\hat{u}\in \mathbb{R}^N$, we denote the support set of the gradients of $\hat{u}$ as
$$\Omega_1(\hat{u})=\{k\in J:\Vert D_k \hat{u}\Vert \neq 0\},$$
and denote $\Omega_0(\hat{u}):=J\backslash \Omega_1(\hat{u})$.

\section{An overview of our two-stage image segmentation method}
\label{sec_twostage}
Image segmentation can be regarded as a pixel classification problem in a certain feature space, which can be done by a clustering or thresholding procedure. In median level vision, the most widely used feature is the pixel intensity. Figure \ref{fig0_seg} shows some results by simply applying the thresholding method in \cite{cai2013two}, which is an improved version of a conventional clustering algorithm implemented by the MATLAB command KMEANS. This thresholding method works very well for piecewise constant images, but fails for those with intensity inhomogeneity, yielding uncorrect object recognition. From this observation, we see that, if we can remove the intensity inhomogeneity and compute a piecewise constant approximate image first, then we can segment it more precisely.
This is exactly a two-stage strategy, like in \cite{cai2013two,duan2015l_,chang2017new,Chan2018Convex}, where the first stage is the key one. Our approach is described as follows.

\begin{figure}[H]
	\centerline{
		\begin{tabular}{c@{}c@{}c@{}c@{}c@{}c}
			\includegraphics[width=0.8in]{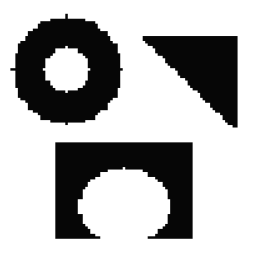} \ &
			\includegraphics[width=0.8in]{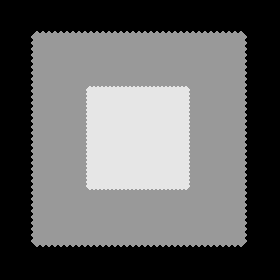} \ &
			\includegraphics[width=0.8in]{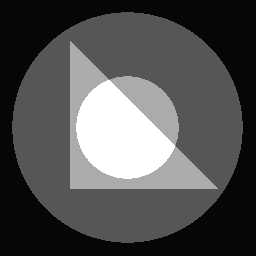} \ \ \ &
			\includegraphics[width=0.8in]{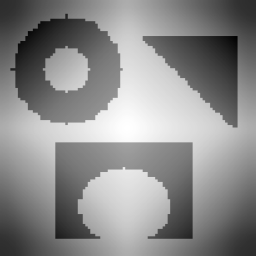} \ &
			\includegraphics[width=0.8in]{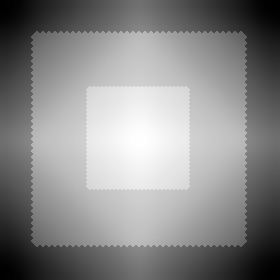} \ &
			\includegraphics[width=0.8in]{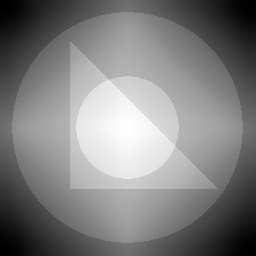} \\
			\includegraphics[width=0.8in]{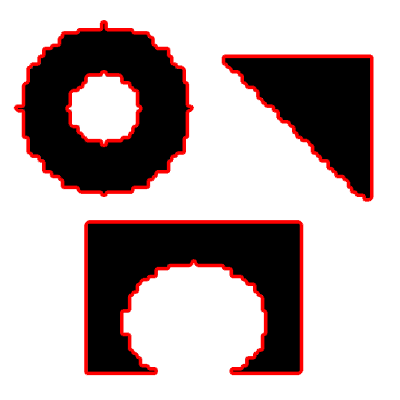} \ &
			\includegraphics[width=0.8in]{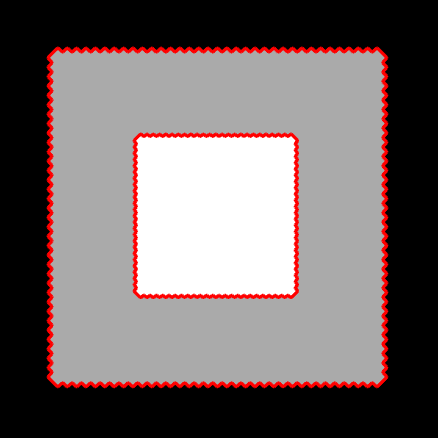} \ &
			\includegraphics[width=0.8in]{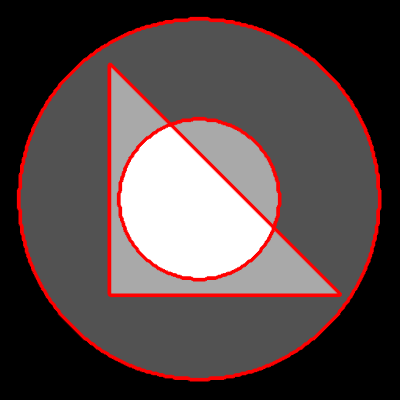} \ \ \ &
			\includegraphics[width=0.8in]{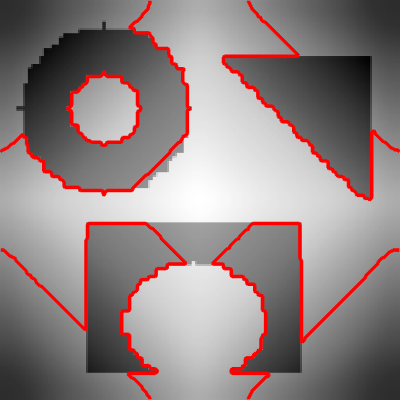} \ &
			\includegraphics[width=0.8in]{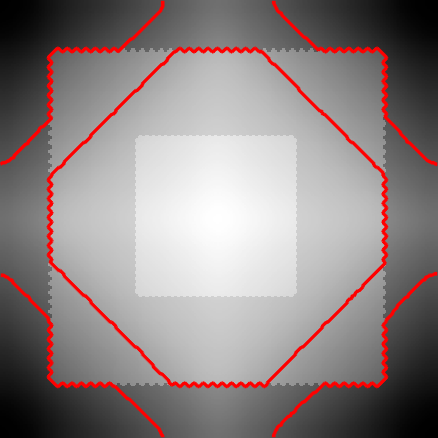} \ &
			\includegraphics[width=0.8in]{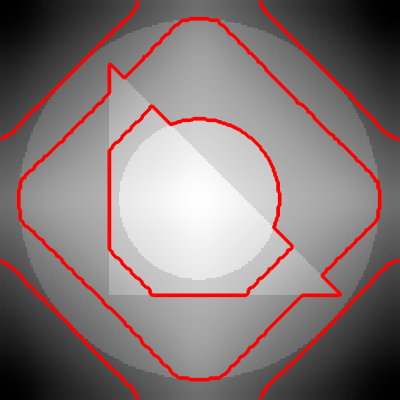} \\
		\end{tabular}
	}
	\caption{\small\sl The necessity of inhomogeneity correction for inhomogeneous image segmentation using the thresholding method in \cite{cai2013two}. Row 1: Some piecewise constant (the first three columns) and intensity inhomogeneous (the last three columns) images; Row 2: the corresponding segmentation results by using the thresholding method directly.}
	\label{fig0_seg}
\end{figure}

\subsection{The first stage: inhomogeneity removal by non-Lipschitz variational decomposition}\label{sec_stageone}

As continuous non-Lipschitz regularization has been shown to generate piecewise constant solutions with neat edges in image restoration \cite{nikolova2005analysis,chen2012non,chen2013optimality,bian2015linearly,zeng2018edge,Chao2018An,zeng2019non}, we believe that it will benefit the solution of image decomposition problems. We here propose to combine it with the Tikhonov regularizer with second-order differences, to build the following non-Lipschitz additive decomposition model
\begin{equation}\label{equ_model1}
\min \limits_{u,v} \ F(u,v):=\frac{1}{2}{\Vert f-u-v\Vert}^{2} + \alpha \sum\limits_{i\in J}\phi(\Vert D_i u\Vert) + \frac{\beta}{2}{\Vert Hv\Vert}^{2} + \frac{\gamma}{2}{\Vert v\Vert}^{2},
\end{equation}
where $\phi:\ [0,\infty)\to[0,\infty)$ is a potential function which is usually assumed to satisfy Assumption \ref{assum-phi}. We will present an algorithm and convergence analysis for this model in the next section.

\begin{assumption}\label{assum-phi}
	\begin{enumerate}[(i)]
		\item $\phi:[0,+\infty)\rightarrow [0,+\infty)$ is continuous, concave and coercive with $\phi(0)=0$;
		\item $\phi$ is $C^1$ on $(0,+\infty)$ with $\phi'(t)|_{(0,+\infty)}>0$ and $\phi'(0+)=+\infty$;
		\item For any $c>0$, $\phi'$ is $L_{c}$-Lipschitz continuous on $[c,+\infty)$, i.e., there exists a constant $L_c$ determined by c, such that for all $x,y \in [c,+\infty),\ |\phi'(x)-\phi'(y)| \leq L_c|x-y|$.
	\end{enumerate}
\end{assumption}
\begin{remark}\label{remark_1}
	\begin{enumerate}[(a)]
		\item Assumption \ref{assum-phi}(\romannumeral1) implies that $F(u,v)$ is continuous, bounded below, and coercive. Therefore, the minimizer of $F(u,v)$ always exists.
		\item Assumption \ref{assum-phi}(\romannumeral2) implies that $\phi$ is non-Lipschitz.
	\end{enumerate}
\end{remark}

\begin{remark}\label{remark_gic}
	We mention that, in \cite{Wuchunlin2020}, we consider a general non-Lipschitz regularized infimal convolution model with two low level vision applications, i.e., Retinex and cartoon-texture decomposition.
\end{remark}

\subsection{The second stage: segmentation by thresholding}\label{sec_stagetwo}
After the inhomogeneity removal from the decomposition model in \eqref{equ_model1}, we adopt the widely-used thresholding approach in \cite{cai2013two} to get the segmentation of $u$ in the second stage, whose thresholds are determined by some clustering algorithm. For the sake of completeness, we give its procedure here. The approach uses the $\mathcal{K}$-means clustering algorithm by the MATLAB $\mathcal{K}$-means command KMEANS, to classify the pixel values of $u$ into $\mathcal{K}$ clusters. Specifically, let the mean value of each cluster be $\hat{\rho}_1, \hat{\rho}_2, \cdots, \hat{\rho}_\mathcal{K}$ and, without loss of generality, $\hat{\rho}_1 \leq \hat{\rho}_2 \leq \cdots \leq \hat{\rho}_\mathcal{K}$. Based on this, the approach defines the $(\mathcal{K}-1)$ thresholds as
$$\rho_i = \frac{\hat{\rho}_i+\hat{\rho}_{i+1}}{2}, \quad i=1,2,\ldots,\mathcal{K}-1.$$
Then the $i$th phase of $u$, $1 \leq i \leq \mathcal{K}$, is simply given by $\{ j\in J:\rho_{i-1} < u_j \leq \rho_i\}$.

\section{The algorithm and convergence analysis} \label{sec_algorithm-convergence}

\subsection{Algorithm}\label{sec_algorithm}
We first compute the subdifferential of $F(u,v)$ and present a motivating theorem of our iterative algorithm.
\begin{lemma}\label{lemma-subdifferential}
	(Subdifferential) Let $\partial F(\hat{u},\hat{v})$ be the subdifferential of $F(u,v)$ at $(\hat{u},\hat{v})$. Then $\partial F(\hat{u},\hat{v})=W_1 \otimes W_2$, where
	\begin{equation} \label{equ_subdifferential}
	\left\{
	\begin{aligned}
	W_1 & = \alpha\sum_{ i \in  \Omega_0(\hat{u})}(\ker D_i)^\bot + \alpha\sum_{ i \in  \Omega_1(\hat{u})} \phi'(\|D_i \hat{u}\|)\frac{{D_i}^TD_i \hat{u}}{\|D_i \hat{u}\|} + (\hat{u}+\hat{v}-f), \\
	W_2 & = \left\{ (\hat{u}+\hat{v}-f) + \beta H^TH\hat{v} + \gamma \hat{v} \right\}.
	\end{aligned} \right.
	\end{equation}	
\end{lemma}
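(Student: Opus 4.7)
I would split $F$ into a smooth part $F_1(u,v)=\tfrac12\|f-u-v\|^2+\tfrac{\beta}{2}\|Hv\|^2+\tfrac{\gamma}{2}\|v\|^2$ (everywhere $C^1$) and a non-Lipschitz part $F_2(u,v)=\alpha\sum_{i\in J}\phi(\|D_iu\|)$ (independent of $v$), then combine the two by the sum rule for the limiting subdifferential, which applies without qualification because $F_1$ is $C^1$. A direct computation gives $\nabla F_1(\hat u,\hat v)=\bigl(\hat u+\hat v-f,\ (\hat u+\hat v-f)+\beta H^TH\hat v+\gamma\hat v\bigr)$, and the $v$-independence of $F_2$ yields $\partial F_2(\hat u,\hat v)=\partial_uF_2(\hat u)\times\{0\}$. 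Adding $\nabla F_1(\hat u,\hat v)$ fills the $v$-slot with the singleton $W_2$ and leaves the $u$-slot as a translate of $\partial_uF_2(\hat u)$, producing the product form $W_1\otimes W_2$ once $\partial_uF_2(\hat u)$ is identified.

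The bulk of the work is to identify $\partial_uF_2(\hat u)$ by splitting $J=\Omega_0(\hat u)\sqcup\Omega_1(\hat u)$. For $i\in\Omega_1(\hat u)$, $D_i\hat u\neq 0$, so $\phi$ is $C^1$ there by Assumption \ref{assum-phi}(ii) and the Euclidean norm is $C^1$ off the origin; the chain rule then gives the classical gradient contribution $\alpha\phi'(\|D_i\hat u\|)D_i^TD_i\hat u/\|D_i\hat u\|$, reproducing the second sum in $W_1$. These smooth terms can be pulled out via the sum rule, reducing the problem to computing $\partial F_2^{(0)}(\hat u)$ where $F_2^{(0)}(u):=\alpha\sum_{i\in\Omega_0(\hat u)}\phi(\|D_iu\|)$, and the claim is $\partial F_2^{(0)}(\hat u)=\alpha\sum_{i\in\Omega_0(\hat u)}(\ker D_i)^\perp$.

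The main obstacle is this singular piece. I would argue via the Fréchet subdifferential, leveraging the property $\phi(t)/t\to+\infty$ as $t\to 0^+$ which follows from $\phi'(0^+)=+\infty$ together with concavity. For the inclusion $\supseteq$, take $p^*=\alpha\sum_{i\in\Omega_0(\hat u)}D_i^Tw_i$ and, for $h=u-\hat u$, decompose $h=h^\parallel+h^\perp$ with $h^\parallel\in\sum_{i\in\Omega_0(\hat u)}(\ker D_i)^\perp$ and $h^\perp\in\bigcap_{i\in\Omega_0(\hat u)}\ker D_i$; since $D_ih=D_ih^\parallel$ on the active indices and each $D_i$ is injective on $(\ker D_i)^\perp$, a term-by-term estimate of the form $(\phi(\|D_ih\|)-\langle w_i,D_ih\rangle)/\|h\|\ge(\|D_ih\|/\|h\|)\bigl[\phi(\|D_ih\|)/\|D_ih\|-\|w_i\|\bigr]$ has an eventually positive bracket, giving $\liminf\ge0$. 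For the reverse inclusion, any $p^*$ outside $\sum_{i\in\Omega_0(\hat u)}(\ker D_i)^\perp$ admits $\xi\in\bigcap_{i\in\Omega_0(\hat u)}\ker D_i$ with $\langle p^*,\xi\rangle>0$; along $h=\varepsilon\xi$, $F_2^{(0)}$ stays at zero while the linearization is strictly positive, refuting the Fréchet inequality. Since $\sum_{i\in\Omega_0(\hat u)}(\ker D_i)^\perp$ is closed, the limiting subdifferential coincides with the Fréchet one, completing the identification of $W_1$ and hence $\partial F(\hat u,\hat v)=W_1\otimes W_2$.
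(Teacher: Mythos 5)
Your proof follows the same skeleton as the paper's: isolate the terms that are differentiable at $(\hat u,\hat v)$ (the quadratic terms together with the $\phi(\|D_i\cdot\|)$ terms over $\Omega_1(\hat u)$, which are $C^1$ near $\hat u$ because $\|D_i\hat u\|>0$ there), apply the sum rule $\partial(f+g)=\nabla f+\partial g$ valid for $f\in C^1$, and reduce everything to the identification $\partial\bigl(\alpha\sum_{i\in\Omega_0(\hat u)}\phi(\|D_i\cdot\|)\bigr)(\hat u)=\alpha\sum_{i\in\Omega_0(\hat u)}(\ker D_i)^\perp$. The genuine difference is in how that last identity is handled: the paper outsources it entirely to citations (Proposition 10.5 of Rockafellar--Wets and a theorem from a companion paper on non-Lipschitz models), whereas you prove it from first principles via the Fr\'echet subdifferential, using $\phi(t)/t\to+\infty$ as $t\to0^+$, which does follow from concavity, $\phi(0)=0$ and $\phi'(0+)=+\infty$ in Assumption \ref{assum-phi}. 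Both of your Fr\'echet inclusions are sound: the term-by-term bound works because $\|D_ih\|\le\|D_i\|\,\|h\|$ makes the bracket $\phi(\|D_ih\|)/\|D_ih\|-\|w_i\|$ positive uniformly over the finitely many active indices once $\|h\|$ is small (the $h=h^\parallel+h^\perp$ decomposition is actually not needed for this direction), and the reverse inclusion via a direction $\xi\in\bigcap_{i\in\Omega_0(\hat u)}\ker D_i$ with $\langle p^*,\xi\rangle>0$ is exactly right. Your version is therefore more self-contained than the paper's, at the cost of length.

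The one step you must repair is the final passage from the Fr\'echet to the limiting subdifferential. ``The set $\sum_{i\in\Omega_0(\hat u)}(\ker D_i)^\perp$ is closed, hence the two subdifferentials coincide'' is not a valid inference: the Fr\'echet subdifferential is always closed, yet it is frequently a proper subset of the limiting one. What you actually need is that the Fr\'echet subdifferential of $u\mapsto\alpha\sum_{i\in\Omega_0(\hat u)}\phi(\|D_iu\|)$ at \emph{every} point $u'$, not just at $\hat u$, is contained in the same fixed closed subspace $\alpha\sum_{i\in\Omega_0(\hat u)}(\ker D_i)^\perp$: at such a $u'$, each index $i\in\Omega_0(\hat u)$ with $D_iu'\ne0$ contributes the gradient $\alpha\phi'(\|D_iu'\|)D_i^TD_iu'/\|D_iu'\|\in(\ker D_i)^\perp$, and each still-singular index contributes a subset of $(\ker D_i)^\perp$ by your argument applied at $u'$. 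Hence every Fr\'echet subgradient at every point lies in that subspace, so every limit of such subgradients does too, giving $\partial\subseteq\alpha\sum_{i\in\Omega_0(\hat u)}(\ker D_i)^\perp=\widehat{\partial}\subseteq\partial$ and therefore equality. With that sentence added, the proof is complete.
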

\begin{proof}
	Since $J=\Omega_0(\hat{u}) \cup \Omega_1(\hat{u})$, $F(u,v)$ in \eqref{equ_model1} reads
	$$F(u,v)=\frac{1}{2}{\Vert f-u-v\Vert}^{2} + \frac{\beta}{2}{\Vert Hv\Vert}^{2} + \frac{\gamma}{2}{\Vert v\Vert}^{2} + \alpha \sum\limits_{i\in \Omega_1(\hat{u})}\phi(\Vert D_i u\Vert) + \alpha \sum\limits_{i\in \Omega_0(\hat{u})}\phi(\Vert D_i u\Vert).$$
	Because $\|D_i \hat{u}\| \not= 0,\ \forall i \in \Omega_1(\hat{u})$ and $\|D_i \hat{u}\| = 0,\ \forall i \in \Omega_0(\hat{u})$, we have, by (\cite[Exercise 8.8]{rockafellar2009variational}),
	$$\partial F(\hat{u},\hat{v})=\nabla \left( \frac{1}{2}{\Vert f-u-v\Vert}^{2} + \frac{\beta}{2}{\Vert Hv\Vert}^{2} + \frac{\gamma}{2}{\Vert v\Vert}^{2} + \alpha \sum\limits_{i\in \Omega_1(\hat{u})}\phi(\Vert D_i u\Vert) \right)(\hat{u},\hat{v}) + \ \partial \left( \alpha\sum\limits_{i\in \Omega_0(\hat{u})}\phi(\Vert D_i u\Vert) \right)(\hat{u},\hat{v}).$$
	The first term on the right hand side is clearly
	$$
	\begin{aligned}
	& \nabla \left( \frac{1}{2}{\Vert f-u-v\Vert}^{2} + \frac{\beta}{2}{\Vert Hv\Vert}^{2} + \frac{\gamma}{2}{\Vert v\Vert}^{2} + \alpha \sum\limits_{i\in \Omega_1(\hat{u})}\phi(\Vert D_i u\Vert) \right)(\hat{u},\hat{v}) \\
	= & \begin{pmatrix} \hat{u}+\hat{v}-f + \alpha\sum_{ i \in  \Omega_1(\hat{u})} \phi'(\|D_i \hat{u}\|)\frac{{D_i}^TD_i \hat{u}}{\|D_i \hat{u}\|}\\ (\hat{u}+\hat{v}-f) + \beta H^TH\hat{v} + \gamma \hat{v} \end{pmatrix}.
	\end{aligned}
	$$
	By (\cite[Proposition 10.5]{rockafellar2009variational}) and referring to \cite[Theorem 2.3]{zhenzhe2019}, we obtain
	$$\partial \left( \alpha\sum\limits_{i\in \Omega_0(\hat{u})}\phi(\Vert D_i u\Vert) \right)(\hat{u},\hat{v})=\alpha\sum_{ i \in  \Omega_0(\hat{u})}(\ker D_i)^\bot \otimes \left\{0 : 0\in \mathbb{R}^N \right\}.$$
	This completes the proof.
\end{proof}

\begin{theorem}\label{theorem_motivation}(Motivation theorem)
	For a given point $(\hat{u},\hat{v})$, there exists a constant $\delta_0>0$, such that for any local minimizer $(u^*,v^*) \in B((\hat{u},\hat{v}),\delta_0)$ of $F(u,v)$, if exists, we have
	$$\Omega_0(\hat{u}) \subseteq \Omega_0(u^*).$$
\end{theorem}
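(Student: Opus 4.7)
The plan is to argue by contradiction. Assuming $(u^{*},v^{*}) \in B((\hat{u},\hat{v}), \delta_{0})$ is a local minimum with $S := \Omega_{0}(\hat{u}) \cap \Omega_{1}(u^{*}) \neq \emptyset$, I would construct a one-parameter perturbation of $u^{*}$ along which $F(\cdot, v^{*})$ has a strictly negative right derivative at $s=0$, contradicting local optimality.

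First I would shrink $\delta_{0}$ so that for every $u^{*} \in B(\hat{u},\delta_{0})$ and every $i \in \Omega_{1}(\hat{u})$ the bound $\|D_{i} u^{*}\| \geq \tfrac{1}{2}\|D_{i}\hat{u}\|$ holds. This secures two facts: (i) $\Omega_{0}(u^{*}) \subseteq \Omega_{0}(\hat{u})$, so no index active for $\hat{u}$ can be deactivated by $u^{*}$; and (ii) by the concavity of $\phi$ (which forces $\phi'$ to be decreasing) together with Assumption~\ref{assum-phi}(iii), $\phi'(\|D_{i}u^{*}\|)$ is uniformly bounded for every $i \in \Omega_{1}(\hat{u})$. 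Next, let $\tilde{u}$ be the orthogonal projection of $u^{*}$ onto $K := \bigcap_{i \in \Omega_{0}(\hat{u})} \ker D_{i}$ and set $u(s) := u^{*} + s(\tilde{u}-u^{*})$. Since $\hat{u} \in K$, $\|\tilde{u}-u^{*}\| \leq \|\hat{u}-u^{*}\| \leq \delta_{0}$; a pseudoinverse estimate sharpens this to $\|\tilde{u}-u^{*}\| \leq C_{P}\sqrt{\sum_{i \in S}\|D_{i}u^{*}\|^{2}}$, where $C_{P}$ depends only on $\hat{u}$. The critical feature is that $D_{i}\tilde{u}=0$ for every $i \in \Omega_{0}(\hat{u})$, so $D_{i}u(s) = (1-s)D_{i}u^{*}$ along the ray, and combined with (i) \emph{no index outside $\Omega_{1}(u^{*})$ is ever activated}.

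Differentiating $F(u(s),v^{*})$ at $s=0^{+}$ splits into three clean pieces. The data term and the $i \in \Omega_{1}(\hat{u})$ gradient terms contribute $O(\|\tilde{u}-u^{*}\|)$ with a constant $C$ depending only on $(\hat{u},\hat{v},f)$, using (ii). Indices $i \in \Omega_{0}(u^{*})$ give zero since $D_{i}u(s)\equiv 0$. Each $i \in S$ contributes exactly $-\alpha\phi'(\|D_{i}u^{*}\|)\|D_{i}u^{*}\|$. Writing $M := \max_{i \in S}\|D_{i}u^{*}\|$ and using $\sum_{i \in S}\phi'(t_{i})t_{i} \geq M\phi'(M)$ along with the projection bound $\|\tilde{u}-u^{*}\| \leq C_{P}\sqrt{|S|}\,M$, I obtain
\[
\frac{d}{ds}F(u(s),v^{*})\bigg|_{s=0^{+}} \leq M\bigl(C\,C_{P}\sqrt{|S|} - \alpha\,\phi'(M)\bigr).
\]
Since $M \leq \max_{i}\|D_{i}\|\,\delta_{0} \to 0$ and $\phi'(0^{+}) = +\infty$ by Assumption~\ref{assum-phi}(ii), one further shrinkage of $\delta_{0}$ forces the bracket to be strictly negative, contradicting local minimality of $(u^{*},v^{*})$.

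The main obstacle, and what ultimately pins down $\delta_{0}$, is securing the non-activation property $\Omega_{0}(u^{*}) \subseteq \Omega_{0}(\hat{u})$ \emph{before} choosing the descent direction: without it, the perturbation $\tilde{u}-u^{*}$ could turn on some $i \in \Omega_{1}(\hat{u})\cap\Omega_{0}(u^{*})$, producing an infinite positive marginal cost (from $\phi'(0^{+})=+\infty$) that would swamp the gain at $S$. Once this is arranged, the remaining work is bookkeeping—controlling bounded linear-in-$\|\tilde{u}-u^{*}\|$ terms against the $\alpha M\phi'(M)$ blow-up furnished by the non-Lipschitz slope of $\phi$ at the origin.
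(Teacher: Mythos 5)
Your proof is correct, but it takes a genuinely different route from the paper's. The paper argues through the first-order optimality condition of the local minimizer: it derives a bound of the form $\sum_{i\in\Omega_1(u^*)}\phi'(\|D_iu^*\|)\langle D_iu^*/\|D_iu^*\|,D_iw\rangle\leq (g(\hat u,\hat v)+1)\|w\|$ for all $w$ in the relevant kernel intersection, invokes the ``conservativeness of image gradient fields'' machinery of \cite{zeng2018edge,Chao2018An} to extract a uniform positive lower bound $\theta(\hat u,\hat v)$ on all nonzero $\|D_iu^*\|$ over the ball, and then concludes by continuity of $D_i$ (an index in $\Omega_0(\hat u)$ cannot jump above $\theta$ within a small enough ball). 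You instead bypass the lower-bound lemma entirely and contradict local minimality at the level of objective values, by moving $u^*$ toward its projection onto $\bigcap_{i\in\Omega_0(\hat u)}\ker D_i$ and showing the one-sided derivative is negative: the gain $-\alpha\,\phi'(M)M$ from the non-Lipschitz slope $\phi'(0^+)=+\infty$ dominates the $O(\|\tilde u-u^*\|)=O(\sqrt{|S|}\,M)$ cost of the bounded terms once $\delta_0$ is small. Your preliminary shrinkage guaranteeing $\Omega_0(u^*)\subseteq\Omega_0(\hat u)$ is exactly the right safeguard against activating a currently-inactive gradient (which would incur an infinite marginal cost), and the pseudoinverse bound $\|\tilde u-u^*\|\leq C_P\bigl(\sum_{i\in S}\|D_iu^*\|^2\bigr)^{1/2}$ is valid with $C_P$ determined by $\Omega_0(\hat u)$ alone. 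What each approach buys: yours is self-contained and elementary, exposing the descent mechanism directly rather than citing external results; the paper's version produces the quantitative threshold $\theta(\hat u,\hat v)$ as a by-product, which mirrors the argument reused later for the iterates in Theorem \ref{theorem_lowerbound}, so the two proofs share infrastructure there.
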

\begin{proof}
	Let $g(u,v):=\frac{1}{\alpha}\|f-u-v\|^2$. Since $g(u,v)$ is continuous, for $\epsilon=1$, there exists a $\delta_1$, such that for any $(u^*,v^*) \in B((\hat{u},\hat{v}), \delta_1)$, we have
	\begin{equation} \label{equ_continuity}
	|g(u^*,v^*)-g(\hat{u},\hat{v})| < 1.
	\end{equation}
	
	\noindent Now we establish a lower bound $\theta(\hat{u},\hat{v})$ of $\|D_i u^*\|$ for any $(u^*,v^*) \in B((\hat{u},\hat{v}), \delta_1)$.
	
	Since $(u^*,v^*)$ is a local minimizer, by the first-order optimality condition and Lemma \ref{lemma-subdifferential}, we have
	$$
	0 \in \alpha\sum_{ i \in  \Omega_0(u^*)}(\ker D_i)^\bot + \alpha\sum_{ i \in  \Omega_1(u^*)} \phi'(\|D_i u^*\|)\frac{{D_i}^TD_i u^*}{\|D_i u^*\|} + (u^*+v^*-f).
	$$
	For $\sum_{ i \in  \Omega_0(u^*)}(\ker D_i)^\bot = \left(\cap_{i \in  \Omega_0(u^*)}\ker D_i\right)^\bot$, we have
	$$
	\left\langle \alpha\sum_{ i \in  \Omega_1(u^*)} \phi'(\|D_i u^*\|)\frac{{D_i}^TD_i u^*}{\|D_i u^*\|} + (u^*+v^*-f), w \right\rangle = 0, \hspace{0.5cm} \forall \ w \in \cap_{i \in  \Omega_0(u^*)}\ker D_i.
	$$
	Hence,
	\begin{equation}\label{equ_inequality}
	\begin{aligned}
	\sum_{ i \in \Omega_1(u^*)}\phi'(\|D_i u^*\|)\left\langle \frac{D_i u^*}{\|D_i u^*\|}, D_i w \right\rangle
	= & \frac{1}{\alpha} \left\langle f-u^*-v^*, w \right\rangle   \\
	\leq & \frac{1}{\alpha}  \|f-u^*-v^*\|\|w\|  \\
	= & g(u^*,v^*) \|w\| \\
	\mbox{ [ by \eqref{equ_continuity} ] } < & (g(\hat{u},\hat{v})+1)\|w\|.
	\end{aligned}
	\end{equation}	
	Obviously, $(g(\hat{u},\hat{v})+1)$ is a positive constant relying only on $(\hat{u},\hat{v})$. From \eqref{equ_inequality}, we can obtain a positive lower bound of nonzero $\|D_i u^*\|$ by a similar argument as in \cite[Theorem 1]{Chao2018An}, or \cite{zhenzhe2019}, by using the tool of ``Conservativeness of image gradient fields" developed in \cite{zeng2018edge}. That is to say, there exists $\theta(\hat{u},\hat{v})>0$ such that
	\begin{equation} \label{equ_locallowerbound1}
	either\ \ \Vert D_i u^* \Vert=0\ \ or \ \ \Vert D_i u^* \Vert>\theta(\hat{u},\hat{v}),\quad \forall (u^*,v^*) \in B((\hat{u},\hat{v}), \delta_1),\ \forall i \in J.
	\end{equation}
	We mention that, $\theta(\hat{u},\hat{v})$ is a constant dependent on the given $(\hat{u},\hat{v})$.
	
	Since $D_i$ is continuous, for $\epsilon=\theta(\hat{u},\hat{v})$, there exists a $\delta_0 \leq \delta_1$, such that	
	\begin{equation} \label{equ_localcontinuous}
	\|D_i u^* - D_i \hat{u}\| < \theta(\hat{u},\hat{v}), \quad \forall (u^*,v^*) \in B((\hat{u},\hat{v}), \delta_0),\ \forall i \in J.
	\end{equation}
	
	For any $(u^*,v^*) \in B((\hat{u},\hat{v}), \delta_0)$, we prove $\Omega_0(\hat{u}) \subseteq \Omega_0(u^*)$ by contradiction. If $\exists \ i_0\in J$, $\|D_{i_0} \hat{u}\|=0, \|D_{i_0} u^*\|\not=0$. On one hand, by \eqref{equ_locallowerbound1} and $\delta_0 \leq \delta_1$, we have
	\begin{equation} \label{equ_contradiction1}
	\|D_{i_0} u^* - D_{i_0} \hat{u}\|=\|D_{i_0} u^*\|>\theta(\hat{u},\hat{v}).
	\end{equation}
	On the other hand, by \eqref{equ_localcontinuous}, we have
	
	\begin{equation} \label{equ_contradiction2}
	\|D_{i_0} u^* - D_{i_0} \hat{u}\| < \theta(\hat{u},\hat{v}).
	\end{equation}
	
	\noindent \eqref{equ_contradiction1} and \eqref{equ_contradiction2} form a contradiction, which proves $\Omega_0(\hat{u}) \subseteq \Omega_0(u^*)$.
\end{proof}

According to Theorem \ref{theorem_motivation}, if a local minimizer $(u^*,v^*)$ is near to a given point $(\hat{u},\hat{v})$, $\Vert D_i u^* \Vert$ should be zero when $\Vert D_i \hat u \Vert=0$. This phenomenon naturally implies an iterative support shrinking procedure for the problem in \eqref{equ_model1}. We mention that this kind of strategy was also derived for different signal and image restoration problems with different objective functions \cite{Chao2018An,zeng2019non,liu2019new,zhenzhe2019}. Given $(u^{k},v^{k})$, we thus compute $(u^{k+1},v^{k+1})$ by solving
\begin{equation}\label{model1-shrink}
(\mathcal{E}_k) \hspace{0.2cm} \left\{
\begin{aligned}
\min \limits_{u,v} \quad &E_k(u,v) := \frac{1}{2}{\Vert f-u-v\Vert}^{2}+\alpha\sum\limits_{i\in\Omega_{1}^{k}}\phi(\Vert D_i u\Vert)
+\frac{\beta}{2}{\Vert Hv\Vert}^{2}+\frac{\gamma}{2}{\Vert v\Vert}^{2}, \\
\st \quad &D_i u=0,\ \ \ \forall i \in \Omega_{0}^{k},
\end{aligned} \right.
\end{equation}
where $\Omega_{1}^{k}=\Omega_{1}(u^k)$ and $\Omega_{0}^{k}=\Omega_{0}(u^k)$.
The following relation can be verified easily:
\begin{equation}\label{equ_FkF}
E_k(u^{k+j},v^{k+j}) = F(u^{k+j},v^{k+j}), \quad \forall k,j \geq 0.
\end{equation}

Due to the non-convexity of $\phi$, $(\mathcal{E}_k)$ in \eqref{model1-shrink} is obviously still difficult to solve. Using a linear approximation of $\phi$ and a proximal technique, like in \cite{Chao2018An,zeng2019non}, we give the following iterative support shrinking algorithm with proximal linearization (ISSAPL) for the problem in \eqref{equ_model1}. Note that, we need only a proximal term for the $u$ variable, because the objective is already strongly convex with respect to $v$.

\begin{mdframed}
	\textbf{ISSAPL-ID: iterative support shrinking algorithm with proximal linearization for image decomposition model \eqref{equ_model1}.}
	\begin{enumerate}
		\item Input $f,\alpha,\beta,\gamma,\rho>0$. Initialize $(u^0,v^0)=(f,f)$.
		\item \label{step-ISSAPL} For $k=0,1,\dots$, compute $(u^{k+1},v^{k+1})$ by solving
		\begin{equation}\label{model1-shrink-approx}
		(\mathcal{G}_k) \hspace{0.2cm} \left\{
		\begin{aligned}
		\min \limits_{u,v} \quad & 	G_k(u,v) :=\frac{1}{2}{\Vert f-u-v\Vert}^{2}+\alpha \sum\limits_{i\in\Omega_{1}^{k}}\phi ^{'}(\Vert D_i u^k\Vert)\Vert D_i u\Vert \\
		&\qquad\qquad +\frac{\rho}{2}{\Vert u-u^{k}\Vert}^{2}+\frac{\beta}{2}{\Vert Hv\Vert}^{2}+\frac{\gamma}{2}{\Vert v\Vert}^{2}, \\
		\st \quad & D_i u=0,\ \ \ \forall i \in \Omega_{0}^{k}.
		\end{aligned} \right.
		\end{equation}
		
		Until a termination criterion is met.
	\end{enumerate}
\end{mdframed}

\subsection{Convergence analysis of ISSAPL-ID} \label{sec_convergence}
In this subsection, we establish the global convergence of the sequence $\{(u^k,v^k)\}$ generated by ISSAPL-ID, by using the Kurdyka-{\L}ojasiewicz (KL) property and an abstract framework shown in \cite[Theorem 2.9]{attouch2013convergence} for descent algorithms. We assume that each $(\mathcal{G}_k)$ in \eqref{equ_model1} is exactly solved.

The KL property of real functions studied in \cite{lojasiewicz1963propriete}\cite{kurdyka1998gradients} has recently become a key concept and tool for the convergence analysis in non-convex optimization. One can refer to \cite{bolte2007lojasiewicz,bolte2007clarke,attouch2009convergence,attouch2010proximal,attouch2013convergence,bolte2014proximal} for its applications in optimization and \cite{attouch2010proximal,attouch2013convergence,bolte2014proximal,ochs2015iteratively} for examples of KL functions. Some related preliminaries of KL property have been provided in the Appendix, where we illustrate that $F(u,v)$ in this paper is indeed a KL function.

A very useful abstract framework for analyzing descent algorithms is shown in \cite[Theorem 2.9]{attouch2013convergence}. For a proper lower semicontinuous function $h(x):\mathbb{R}^n \to \mathbb{R}\cup\{\infty\}$ satisfying the KL property, the authors proved that the sequence $\{x^k\}$ converges to a critical point of $h$, if $\{x^k\}$ satisfies three conditions: sufficient decrease condition, relative error condition and continuity condition. For our problem, we will demonstrate the sufficient decrease condition in Lemma \ref{lemma_decrease} and relative error condition in Lemma \ref{lemma_subgradient_bound}, respectively. The continuity condition is obvious. The global convergence is then concluded in Theorem \ref{theorem_convergence}.

For the convenience of later description, we define $\mathcal{C}_k: = \cap_{i \in \Omega_0^k}\ker D_i = \{u\in \mathbb{R}^{N}: \Vert D_i u\Vert=0,\ \forall i \in \Omega_0^k\}$ and denote its indicator function as $\delta_{\mathcal{C}_k}:\mathbb{R}^N\rightarrow (-\infty,+\infty]$, which reads
$$
\delta_{\mathcal{C}_k}(u)=
\begin{cases}
0  & \text{if}\ u\in \mathcal{C}_k,\\
+\infty  & \mbox{otherwise}.
\end{cases}
$$
Clearly the subdifferential of $\delta_{\mathcal{C}_k}$ at $u$ is
$$
\partial\big( \delta_{\mathcal{C}_k}(u) \big)= N_{\mathcal{C}_k}(u)=
\begin{cases}
\left(\cap_{i \in  \Omega_0^k}\ker D_i\right)^\bot & \text{if}\ u\in \mathcal{C}_k,\\
\emptyset & \mbox{otherwise},
\end{cases}
$$
where $N_{\mathcal{C}_k}(u)$ is the normal cone of $\mathcal{C}_k$ at $u$. Using $\delta_{\mathcal{C}_k}$, we reformulate the minimization problem 	$(\mathcal{G}_k)$ to the following unconstrained one
\begin{equation}\label{equ-unchange-approx-delta}
(\mathcal{G}_k^{\delta}) \hspace{0.5cm}
\min \limits_{u,v} \quad G_k^{\delta}(u,v) := G_k(u,v) + \delta_{\mathcal{C}_k}(u),
\end{equation}
which will be used soon.

We start from the following inequality.
\begin{lemma}\label{lemma-convex property}
	Let $(u^{k+1},v^{k+1})$ be the solution of 	$(\mathcal{G}_k^{\delta})$. Then it holds that
	\begin{equation}\label{equ-convex property}
		G_k(u^k,v^k) \geq G_k(u^{k+1},v^{k+1}) + \frac{1}{2}\min\{\rho,\gamma\}  {\left\| (u^k,v^k)-(u^{k+1},v^{k+1}) \right\|}^{2}.
	\end{equation}
\end{lemma}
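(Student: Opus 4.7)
The plan is to deduce \eqref{equ-convex property} from the strong convexity of the unconstrained reformulation $G_k^{\delta}$ and the optimality of $(u^{k+1},v^{k+1})$, after verifying that the indicator term contributes nothing at the two points of interest.

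First I would observe that $G_k^{\delta}(u,v) = G_k(u,v) + \delta_{\mathcal{C}_k}(u)$ splits as a sum of convex terms: the data fidelity $\tfrac{1}{2}\|f-u-v\|^2$, the weighted sum $\alpha\sum_{i\in\Omega_1^k}\phi'(\|D_i u^k\|)\|D_i u\|$ (convex in $u$ because the weights $\phi'(\|D_i u^k\|)$ are strictly positive by Assumption~\ref{assum-phi}(ii)), the quadratic $\tfrac{\beta}{2}\|Hv\|^2$, and the indicator $\delta_{\mathcal{C}_k}$ (convex since $\mathcal{C}_k$ is a linear subspace). The remaining two terms $\tfrac{\rho}{2}\|u-u^k\|^2$ and $\tfrac{\gamma}{2}\|v\|^2$ are separately strongly convex in $u$ and in $v$ respectively; since for every $(u,v)$,
\begin{equation*}
\tfrac{\rho}{2}\|u-u^k\|^2 + \tfrac{\gamma}{2}\|v\|^2 \ \geq\ \tfrac{1}{2}\min\{\rho,\gamma\}\bigl(\|u-u^k\|^2+\|v\|^2\bigr),
\end{equation*}
these two terms together provide joint $\min\{\rho,\gamma\}$-strong convexity in $(u,v)$. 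Adding the convex remainder leaves $G_k^{\delta}$ itself $\min\{\rho,\gamma\}$-strongly convex on $\mathbb{R}^N\times\mathbb{R}^N$.

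Next, since $(u^{k+1},v^{k+1})$ minimizes $G_k^{\delta}$, the first-order optimality condition gives $0\in\partial G_k^{\delta}(u^{k+1},v^{k+1})$. Combining this with the strong-convexity inequality applied between $(u^{k+1},v^{k+1})$ and $(u^k,v^k)$ yields
\begin{equation*}
G_k^{\delta}(u^k,v^k) \ \geq\ G_k^{\delta}(u^{k+1},v^{k+1}) + \tfrac{1}{2}\min\{\rho,\gamma\}\,\|(u^k,v^k)-(u^{k+1},v^{k+1})\|^2.
\end{equation*}

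Finally I would verify that the indicator is silent at both points. By definition of $\Omega_0^k=\Omega_0(u^k)$, one has $D_i u^k=0$ for all $i\in\Omega_0^k$, so $u^k\in\mathcal{C}_k$ and $\delta_{\mathcal{C}_k}(u^k)=0$. Similarly, feasibility of the minimizer of $(\mathcal{G}_k^{\delta})$ forces $u^{k+1}\in\mathcal{C}_k$, hence $\delta_{\mathcal{C}_k}(u^{k+1})=0$. Therefore $G_k^{\delta}$ agrees with $G_k$ at these two points, and the displayed inequality becomes exactly \eqref{equ-convex property}. The main (mild) subtlety in this plan is the joint strong-convexity step: the quadratics in $u$ and in $v$ are anchored at different centers, but since strong convexity is a statement about the Hessian (or the quadratic lower bound around any point), the two moduli simply aggregate to $\min\{\rho,\gamma\}$ for the joint variable.
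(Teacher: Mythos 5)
Your proposal is correct and follows essentially the same route as the paper: establish $\min\{\rho,\gamma\}$-strong convexity of $G_k^{\delta}$, invoke the first-order optimality condition $0\in\partial G_k^{\delta}(u^{k+1},v^{k+1})$, apply the strong-convexity inequality, and note that $\delta_{\mathcal{C}_k}(u^k)=\delta_{\mathcal{C}_k}(u^{k+1})=0$ so that $G_k^{\delta}$ and $G_k$ agree at the two points. The only difference is that you justify the strong convexity by explicitly decomposing $G_k^{\delta}$ into convex terms plus the two anchored quadratics, whereas the paper simply asserts that $G_k^{\delta}(u,v)-\frac{1}{2}\min\{\rho,\gamma\}\|(u,v)\|^2$ is convex and cites \cite[Exercise 12.59]{rockafellar2009variational}; your version is a more detailed account of the same argument.
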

\begin{proof}
	It is not hard to verify that $G_k^{\delta}(u,v)-\frac{1}{2}\min\{\rho,\gamma\}  {\left\| (u,v) \right\|}^{2}$ is a convex function. By (\cite[Exercise 12.59]{rockafellar2009variational}), $G_k^{\delta}(u,v)$ is thus strongly convex with constant $\frac{1}{2}\text{min}\{\rho,\gamma\}$. We therefore have
	\begin{equation} \label{equ-convex property-temp}
	\begin{aligned}
	G_k^{\delta}(u^k,v^k) \geq G_k^{\delta}(u^{k+1},v^{k+1}) & + \left\langle \partial G_k^{\delta}(u^{k+1},v^{k+1}), (u^k,v^k)-(u^{k+1},v^{k+1}) \right\rangle \\
	& + \frac{1}{2}\text{min}\{\rho,\gamma\} {\left\| (u^k,v^k)-(u^{k+1},v^{k+1}) \right\|}^{2}.
	\end{aligned}
	\end{equation}
	Since $(u^{k+1},v^{k+1})$ is the solution of $(\mathcal{G}_k^{\delta})$, it follows from the first-order optimality condition that
	$$0 \in \partial G_k^{\delta}(u^{k+1},v^{k+1}).$$
	Combining it with \eqref{equ-convex property-temp} and $\delta_{\mathcal{C}_k}(u^k)=\delta_{\mathcal{C}_k}(u^{k+1})=0$, we obtain \eqref{equ-convex property}.
\end{proof}

\begin{lemma}\label{lemma_decrease}(Sufficient decrease condition)
	The sequence $\{F(u^{k},v^{k})\}$ is nonincreasing and in particular
	\begin{equation}\label{equ_Fnonincrease}
	\frac{1}{2}\min\{\rho,\gamma\}  {\left\| (u^{k+1},v^{k+1})-(u^k,v^k) \right\|}^{2} \leq F(u^k,v^k) - F(u^{k+1},v^{k+1}), \quad \forall k \geq 0.
	\end{equation}
\end{lemma}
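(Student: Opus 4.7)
The plan is to compare the auxiliary surrogate $G_k$ with the true objective $F$ at the two consecutive iterates, then invoke the strong convexity inequality already proven in Lemma~\ref{lemma-convex property}. Concretely, I would split the argument into three ingredients.

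First, I would use the concavity of $\phi$ (Assumption~\ref{assum-phi}(i)) on $(0,+\infty)$ to write the subgradient/tangent inequality
\begin{equation*}
\phi'(\|D_i u^k\|)\,\|D_i u^{k+1}\| \;\geq\; \phi(\|D_i u^{k+1}\|) - \phi(\|D_i u^k\|) + \phi'(\|D_i u^k\|)\|D_i u^k\|
\end{equation*}
for every $i\in\Omega_1^k$ (where $\|D_i u^k\|>0$, so $\phi'$ is well defined by Assumption~\ref{assum-phi}(ii)). Summing over $i\in\Omega_1^k$ and plugging into the definition of $G_k$ gives, after cancellation,
\begin{equation*}
G_k(u^{k+1},v^{k+1}) \;\geq\; E_k(u^{k+1},v^{k+1}) - \alpha\!\sum_{i\in\Omega_1^k}\phi(\|D_i u^k\|) + \alpha\!\sum_{i\in\Omega_1^k}\phi'(\|D_i u^k\|)\|D_i u^k\| + \tfrac{\rho}{2}\|u^{k+1}-u^k\|^2.
\end{equation*}
Meanwhile, since $u^k\in\mathcal{C}_k$, direct evaluation yields
\begin{equation*}
G_k(u^k,v^k) \;=\; F(u^k,v^k) - \alpha\!\sum_{i\in\Omega_1^k}\phi(\|D_i u^k\|) + \alpha\!\sum_{i\in\Omega_1^k}\phi'(\|D_i u^k\|)\|D_i u^k\|,
\end{equation*}
where I used $\phi(0)=0$ to drop the zero-support terms.

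Second, I would invoke the relation \eqref{equ_FkF}, namely $E_k(u^{k+1},v^{k+1}) = F(u^{k+1},v^{k+1})$, which rests on the support-shrinking guarantee (so that $D_i u^{k+1}=0$ for all $i\in\Omega_0^k$, making the extra $\Omega_0^k$-terms of $F$ evaluate to $\phi(0)=0$). Subtracting the two displays above and using \eqref{equ_FkF} gives
\begin{equation*}
G_k(u^k,v^k) - G_k(u^{k+1},v^{k+1}) \;\leq\; F(u^k,v^k) - F(u^{k+1},v^{k+1}) - \tfrac{\rho}{2}\|u^{k+1}-u^k\|^2.
\end{equation*}

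Third, I would combine this with the strong-convexity inequality \eqref{equ-convex property} from Lemma~\ref{lemma-convex property}, which lower bounds the left-hand side by $\tfrac{1}{2}\min\{\rho,\gamma\}\|(u^{k+1},v^{k+1})-(u^k,v^k)\|^2$. Rearranging and discarding the nonnegative extra term $\tfrac{\rho}{2}\|u^{k+1}-u^k\|^2$ on the right yields \eqref{equ_Fnonincrease}; monotonicity of $\{F(u^k,v^k)\}$ follows immediately. The only delicate step is the concavity/tangent-line bookkeeping in ingredient one, which must be carried out carefully so that the $\phi(\|D_i u^k\|)$ and $\phi'(\|D_i u^k\|)\|D_i u^k\|$ terms cancel exactly with those appearing in $G_k(u^k,v^k)$; everything else is algebra plus direct appeals to Lemma~\ref{lemma-convex property} and \eqref{equ_FkF}.
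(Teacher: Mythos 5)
Your proposal is correct and follows essentially the same route as the paper: the concavity (tangent-line) inequality for $\phi$ relating $G_k$ to $E_k$, the identity \eqref{equ_FkF}, and the strong-convexity bound of Lemma~\ref{lemma-convex property}. The paper merely arranges the same ingredients as a single chain of inequalities rather than subtracting the two evaluations of $G_k$, so the bookkeeping differs only cosmetically.
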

\begin{proof}
	Since $\phi$ is concave, we have
	$$
	\phi(\|D_i u^{k+1}\|)\leq \phi(\|D_i u^k\|)+ \phi'(\|D_i u^k\|)(\|D_i u^{k+1}\|-\|D_i u^k\|), \quad \forall i \in \Omega_1^k.
	$$
	This indicates
	\begin{equation}\label{equ_FkH}
	E_k(u^{k+1},v^{k+1}) + \frac{\rho}{2}\|u^{k+1}-u^k\|^2 \leq 	G_k(u^{k+1},v^{k+1}) + \alpha\sum_{ i \in \Omega_1^k}\Big(\phi(\|D_i u^k\|) - \phi'(\|D_i u^k\|)\|D_i u^k\|\Big).
	\end{equation}
	It then follows that
	\begin{align*}
	&   F(u^{k+1},v^{k+1}) + \frac{1}{2}\text{min}\{\rho,\gamma\}{\left\| (u^{k+1},v^{k+1})-(u^k,v^k) \right\|}^{2} \\
	\leq & F(u^{k+1},v^{k+1}) + \frac{\rho}{2}\|u^{k+1}-u^k\|^2 + \frac{1}{2}\text{min}\{\rho,\gamma\}{\left\| (u^{k+1},v^{k+1})-(u^k,v^k) \right\|}^{2} \\
	\mbox{ [ by \eqref{equ_FkF} ] } =    & E_k(u^{k+1},v^{k+1}) + \frac{\rho}{2}\|u^{k+1}-u^k\|^2 + \frac{1}{2}\text{min}\{\rho,\gamma\} {\left\| (u^{k+1},v^{k+1})-(u^k,v^k) \right\|}^{2} \\
	\mbox{ [ by \eqref{equ_FkH} ] } \leq & 	G_k(u^{k+1},v^{k+1}) + \alpha\sum_{ i \in \Omega_1^k}\Big(\phi(\|D_i u^k\|) - \phi'(\|D_i u^k\|)\|D_i u^k\|\Big)  \\
	+ & \frac{1}{2}\text{min}\{\rho,\gamma\} {\left\| (u^{k+1},v^{k+1})-(u^k,v^k) \right\|}^{2} \\
	\mbox{ [ by \eqref{equ-convex property} ] }  \leq & 	G_k(u^{k},v^k) + \alpha\sum_{ i \in \Omega_1^k}\Big(\phi(\|D_i u^k\|) - \phi'(\|D_i u^k\|)\|D_i u^k\|\Big)  \\
	= & E_k(u^{k},v^k) \\	
	\mbox{ [ by \eqref{equ_FkF} ] } = & F(u^k,v^k),
	\end{align*}
	which proves \eqref{equ_Fnonincrease}.
\end{proof}

Lemma \ref{lemma_decrease} implies the following properties of $\{(u^k,v^k)\}$, which will be used in the proof of Theorem \ref{theorem_lowerbound} and Theorem \ref{theorem_convergence}.
\begin{lemma}\label{lemma_bounded}
	The sequence $\{(u^k,v^k)\}$ is bounded and satisfies
	\begin{equation}\label{equ_uk}
	\lim_{k\rightarrow \infty}{\left\| (u^{k+1},v^{k+1})-(u^k,v^k) \right\|}=0.
	\end{equation}
\end{lemma}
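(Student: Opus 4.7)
The plan is to deduce both claims directly from the sufficient decrease estimate \eqref{equ_Fnonincrease} in Lemma \ref{lemma_decrease}, together with the coercivity and lower boundedness of $F$ recorded in Remark \ref{remark_1}(a).

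For the boundedness, I would first observe that Lemma \ref{lemma_decrease} gives $F(u^{k+1},v^{k+1}) \leq F(u^k,v^k)$ for every $k$, so by induction $F(u^k,v^k) \leq F(u^0,v^0)$ for all $k$. Hence $\{(u^k,v^k)\}$ lies inside the sublevel set $\{(u,v): F(u,v) \leq F(u^0,v^0)\}$. Since $F$ is coercive by Remark \ref{remark_1}(a), this sublevel set is bounded, and therefore so is the iterate sequence.

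For the asymptotic closeness \eqref{equ_uk}, I would sum \eqref{equ_Fnonincrease} from $k=0$ to $k=K$ to obtain a telescoping estimate
\begin{equation*}
\frac{1}{2}\min\{\rho,\gamma\} \sum_{k=0}^{K} \left\| (u^{k+1},v^{k+1}) - (u^k,v^k) \right\|^{2} \leq F(u^0,v^0) - F(u^{K+1},v^{K+1}).
\end{equation*}
Because $F$ is bounded below (again by Remark \ref{remark_1}(a)) and $\{F(u^k,v^k)\}$ is nonincreasing, the limit $\lim_{k\to\infty} F(u^k,v^k)$ exists and is finite. Letting $K \to \infty$ then yields
\begin{equation*}
\sum_{k=0}^{\infty} \left\| (u^{k+1},v^{k+1}) - (u^k,v^k) \right\|^{2} < \infty,
\end{equation*}
which forces the summand to tend to zero, giving \eqref{equ_uk}.

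There is no real obstacle here: the statement is a standard consequence of the descent inequality plus coercivity and lower boundedness of the objective. The only thing to be a little careful about is invoking $\rho,\gamma > 0$ so that $\min\{\rho,\gamma\}$ is strictly positive and can be divided through; these are guaranteed by the algorithm's input assumption $\rho > 0$ and the model's use of $\gamma > 0$ in \eqref{equ_model1}.
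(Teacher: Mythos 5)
Your proposal is correct and follows essentially the same route as the paper: boundedness from monotone decrease of $F(u^k,v^k)$ plus coercivity of $F$ (Remark \ref{remark_1}(a)), and \eqref{equ_uk} from telescoping the sufficient decrease inequality \eqref{equ_Fnonincrease} together with $F$ being bounded below. The only cosmetic difference is that the paper invokes $F(u,v)\geq 0$ directly where you cite lower boundedness; the arguments are the same.
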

\begin{proof}
	Combining Lemma \ref{lemma_decrease} and $F(u,v)\geq 0$, we know that $\{F(u^k,v^k)\}$ is bounded and convergent.
	By Assumption \ref{assum-phi}(\romannumeral1), $F(u,v)$ is coercive (\cite[Definition 3.25]{rockafellar2009variational}). Thus the sequence $\{(u^k,v^k)\}$ is bounded.
	
	Let $M$ be a positive integer. Summing \eqref{equ_Fnonincrease} from $k=0$ to $M$, we obtain
	$$
	\sum_{k=0}^{M} {\left\| (u^{k+1},v^{k+1})-(u^k,v^k) \right\|}^{2} \leq \frac{2}{\text{min}\{\rho,\gamma\}} \left(F(u^0,v^0) - F(u^{M+1},v^{M+1})\right) \leq \frac{2}{\text{min}\{\rho,\gamma\}} F(u^0,v^0).
	$$
	Taking the limit as $M\rightarrow \infty$ yields
	$$
	\sum_{k=0}^{\infty} {\left\| (u^{k+1},v^{k+1})-(u^k,v^k) \right\|}^{2} \leq \frac{2}{\text{min}\{\rho,\gamma\}} F(u^0,v^0),
	$$
	which leads to \eqref{equ_uk}.
\end{proof}

The subgradient lower bound for the iterates gap (Lemma \ref{lemma_subgradient_bound}) is not difficult for those objective functions with Lipschitz gradients \cite{bolte2014proximal,zhang2017nonconvex}. This is however not trivial in our problem due to the non-Lipschitz objective gradient. To overcome this difficulty, we construct a lower bound for the nonzero gradients of the iterative sequence (Theorem \ref{theorem_lowerbound}). For this, let us analyze our algorithm in more details.

A basic yet crucial property of our algorithm is the finite convergence of the support set sequence. Like \cite[Lemma 2]{Chao2018An}, since the sequence \{$\Omega_0^k$\} satisfies $\Omega_0^k \subseteq \Omega_0^{k+1}$ and $0 \leq \#\Omega_0^k \leq N$, it converges within a finite number of iterations, i.e., there exists $K$, such that
\begin{equation}\label{equ_unchangeconstaint}
\Omega_0^k =\overline{\Omega}_0:=\Omega_0^K \  and \  \Omega_1^k =\overline{\Omega}_1:=\Omega_1^K, \quad \forall k \geq K.
\end{equation}

We now write the first order optimality condition of 	$(\mathcal{G}_k^{\delta})$ for $k \geq K$, which will be used in the proofs of Theorem \ref{theorem_lowerbound} and Lemma \ref{lemma_subgradient_bound}. When $k \geq K$, we have $\Omega_1^k=\overline{\Omega}_1$, $\Omega_0^k=\overline{\Omega}_0$, $\mathcal{C}_k=\mathcal{C}_K$ and $\|D_iu^{k}\| \neq 0, \forall i \in \overline{\Omega}_1$. By a similar reasoning as the proof of Lemma \ref{lemma-subdifferential}, we obtain
$\partial G_k^{\delta}(u,v)=W_1 \otimes W_2$ with	
\begin{numcases}{}
W_1 = \alpha\sum_{ i \in  \overline{\Omega}_1} \phi'(\|D_i u^k\|)\frac{{D_i}^TD_i u}{\|D_i u\|} + (u+v-f) + \rho(u-u^k) + \left(\cap_{i \in  \overline{\Omega}_0}\ker D_i\right)^\bot, \nonumber \\
W_2 = \left\{ (u+v-f) + \beta H^THv + \gamma v \right\}. \nonumber
\end{numcases}
As $(u^{k+1},v^{k+1})$ solves $(\mathcal{G}_k^{\delta})$, we clearly have
\begin{equation} \label{equ_optimalityK}
\left\{
\begin{aligned}
&\alpha\sum_{ i \in  \overline{\Omega}_1} \phi'(\|D_i u^k\|)\frac{{D_i}^TD_i u^{k+1}}{\|D_i u^{k+1}\|}
+ (u^{k+1}+v^{k+1}-f) + \rho(u^{k+1}-u^k)+ \left(\cap_{i \in  \overline{\Omega}_0} \ker D_i\right)^\bot \ni 0, \\
&(u^{k+1}+v^{k+1}-f) + \beta H^THv^{k+1} + \gamma v^{k+1} =0.
\end{aligned} \right.
\end{equation}

The following theorem shows a lower bound theory for the iteration sequence. It not only helps to overcome the non-Lipshitz difficulty in the convergence analysis, but also indicates in some sense that the algorithm generates good approximate image components suitable for thresholding.
\begin{theorem}\label{theorem_lowerbound}(Lower bound of $\|D_i u^{k}\|$)
	There exists a constant $\theta>0$ such that
	$$
	\mbox{either }\quad \|D_i u^{k}\|=0 \quad \mbox{ or } \quad \|D_i u^{k}\|>\theta,  \quad \forall k \geq K, \forall i \in J,
	$$
	with $K$ defined in \eqref{equ_unchangeconstaint}. Moreover, for each $i \in \overline{\Omega}_1$ and $k \geq K$, we have
	\begin{equation}\label{equ_lip}
	\Big|\phi'(\|D_i u^{k+1}\|)-\phi'(\|D_i u^k\|)\Big| \leq L_{\theta}\|D_i\| \|u^{k+1}-u^k\|,
	\end{equation}
	where $L_{\theta}$ is as defined in Assumption \ref{assum-phi}(\romannumeral3).
\end{theorem}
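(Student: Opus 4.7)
The plan is to derive the lower bound $\theta$ from the first-order optimality condition \eqref{equ_optimalityK} combined with the boundedness results of Lemma \ref{lemma_bounded}, and then to obtain the Lipschitz estimate \eqref{equ_lip} as a direct consequence of Assumption \ref{assum-phi}(iii). The workhorse is testing \eqref{equ_optimalityK} against elements of $\mathcal{C}_K = \cap_{i\in\overline{\Omega}_0}\ker D_i$. Because any element of $\left(\cap_{i\in\overline{\Omega}_0}\ker D_i\right)^{\bot}$ annihilates such a $w$, the first inclusion of \eqref{equ_optimalityK} gives the identity
\begin{equation*}
\alpha\sum_{i\in\overline{\Omega}_1}\phi'(\|D_iu^k\|)\left\langle\frac{D_iu^{k+1}}{\|D_iu^{k+1}\|},D_iw\right\rangle = \langle f-u^{k+1}-v^{k+1}-\rho(u^{k+1}-u^k),\,w\rangle.
\end{equation*}
Cauchy--Schwarz on the right, together with the boundedness of $\{(u^k,v^k)\}$ and $\|u^{k+1}-u^k\|\to 0$ from Lemma \ref{lemma_bounded}, bounds the right-hand side by $M\|w\|$ with $M$ independent of $k$.

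To convert this aggregate bound into a pointwise control on each $\phi'(\|D_{i_0}u^k\|)$, I would follow the same ``conservativeness of image gradient fields'' argument used in \cite[Theorem 1]{Chao2018An} and \cite{zhenzhe2019}, as developed in \cite{zeng2018edge}. For each $i_0\in\overline{\Omega}_1$, that tool provides a specific test element $w\in\mathcal{C}_K$ whose discrete gradient essentially concentrates at the index $i_0$; substituting it into the identity above yields $\phi'(\|D_{i_0}u^k\|)\le C_{i_0}$ with $C_{i_0}$ independent of $k$. Taking $C := \max_{i_0\in\overline{\Omega}_1}C_{i_0}$ (a finite maximum, since $\overline{\Omega}_1\subseteq J$ is finite) gives a $k$-uniform upper bound on all active $\phi'$-values. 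Since Assumption \ref{assum-phi}(ii) ensures $\phi'$ is continuous, strictly positive on $(0,+\infty)$, and satisfies $\phi'(0^+)=+\infty$, its sublevel set $\{t>0:\phi'(t)\le C\}$ has the form $[\theta,+\infty)$ for some $\theta>0$, yielding the claimed dichotomy $\|D_iu^k\|=0$ or $\|D_iu^k\|>\theta$ for all $k\ge K$ and $i\in J$.

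For \eqref{equ_lip}, fix $i\in\overline{\Omega}_1$ and $k\ge K$; the stabilization \eqref{equ_unchangeconstaint} forces $\|D_iu^k\|,\|D_iu^{k+1}\|>\theta$, so both arguments lie in $[\theta,+\infty)$. Assumption \ref{assum-phi}(iii) then gives $|\phi'(\|D_iu^{k+1}\|)-\phi'(\|D_iu^k\|)|\le L_\theta\,\big|\|D_iu^{k+1}\|-\|D_iu^k\|\big|$, and the reverse triangle inequality together with the operator-norm estimate yields $\big|\|D_iu^{k+1}\|-\|D_iu^k\|\big|\le\|D_i(u^{k+1}-u^k)\|\le\|D_i\|\|u^{k+1}-u^k\|$, closing \eqref{equ_lip}.

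The main obstacle is the index-isolation step: the linear functionals $w\mapsto\langle D_iu^{k+1}/\|D_iu^{k+1}\|,D_iw\rangle$ are generically not independent on $\mathcal{C}_K$, so naive testing cannot separate a single index $i_0$. The discrete curl-free / line-integral construction of \cite{zeng2018edge} is exactly what makes the single-index isolation possible, and this is where I would spend most of the effort were I to reconstruct the argument from first principles rather than cite \cite{Chao2018An,zhenzhe2019}. Everything else is algebraic manipulation of the optimality condition or a direct invocation of Assumption \ref{assum-phi}.
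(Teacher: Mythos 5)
Your proposal is correct and follows essentially the same route as the paper: test the optimality condition \eqref{equ_optimalityK} against $w\in\cap_{i\in\overline{\Omega}_0}\ker D_i$, bound the right-hand side uniformly in $k$ via Lemma \ref{lemma_bounded}, invoke the conservativeness-of-gradient-fields argument of \cite{zeng2018edge,Chao2018An,zhenzhe2019} for the lower bound, and then apply Assumption \ref{assum-phi}(iii) with the reverse triangle inequality for \eqref{equ_lip}. The paper likewise delegates the index-isolation step to those references, so your identification of that step as the only genuinely nontrivial ingredient matches the paper's treatment.
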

\begin{proof}
	Obviously, it suffices to prove that there exists a constant $\theta>0$ such that
	$$
	\|D_i u^{k}\|>\theta,  \quad \forall k \geq K, \forall i \in \overline{\Omega}_1.
	$$
	
	Since $(u^{k+1},v^{k+1})$ solves $(\mathcal{G}_k^{\delta})$ in \eqref{equ-unchange-approx-delta}, by the first formula in \eqref{equ_optimalityK}, we have
	$$
	0 \in \alpha\sum_{ i \in  \overline{\Omega}_1} \phi'(\|D_i u^k\|)\frac{{D_i}^TD_i u^{k+1}}{\|D_i u^{k+1}\|} + (u^{k+1}+v^{k+1}-f) + \rho(u^{k+1}-u^k) + \left(\cap_{i \in  \overline{\Omega}_0}\ker D_i\right)^\bot.	
	$$
	Therefore, for any $w \in \cap_{i \in \overline{\Omega}_0}\ker D_i$, one deduces that
	$$
	\left\langle \alpha\sum_{ i \in  \overline{\Omega}_1} \phi'(\|D_i u^k\|)\frac{{D_i}^TD_i u^{k+1}}{\|D_i u^{k+1}\|} + (u^{k+1}+v^{k+1}-f) + \rho(u^{k+1}-u^k), w \right\rangle = 0.
	$$
	This indicates
	\begin{align*}
	\alpha\sum_{ i \in \overline{\Omega}_1}\phi'(\|D_i u^k\|)\left\langle \frac{D_i u^{k+1}}{\|D_i u^{k+1}\|}, D_i w \right\rangle
	= & \left\langle f-u^{k+1}-v^{k+1}, w \right\rangle + \rho \left\langle u^k-u^{k+1}, w \right\rangle   \\
	\leq & \left( \|f\|+\|u^{k+1}\|+\|v^{k+1}\| \right) \|w\| +\rho \|u^k-u^{k+1}\|\|w\|.
	\end{align*}
	As $\|u^k-u^{k+1}\|\rightarrow 0$ and $\{(u^k,v^k)\}$ is bounded (Lemma \ref{lemma_bounded}), there exists $\delta>0$, which is independent of $k$, such that
	\begin{equation}\label{proof_upper}
	\sum_{ i \in \overline{\Omega}_1}\phi'(\|D_i u^k\|)\left\langle \frac{D_i u^{k+1}}{\|D_i u^{k+1}\|}, D_i w \right\rangle
	\leq \delta \|w\|.
	\end{equation}
	From \eqref{proof_upper}, the existence of a positive lower bound of nonzero $\|D_i u^k\|$ can be established by a similar argument as in
	\cite[Theorem 1]{Chao2018An}, or \cite{zhenzhe2019}, by using the tool of ``Conservativeness of image gradient fields" developed in \cite{zeng2018edge}. That is to say, there exists $\theta>0$ such that
	\begin{equation}\label{equ_lowerbounds}
	\|D_i u^k\| > \theta,\ \forall k \geq K, \forall i \in \overline{\Omega}_1.
	\end{equation}
	
	Combining \eqref{equ_lowerbounds} with Assumption \ref{assum-phi}(\romannumeral3), we obtain that, for each $i \in \overline{\Omega}_1$ and $k \geq K$,
	$$
	\Big|\phi'(\|D_i u^{k+1}\|)-\phi'(\|D_i u^k\|)\Big| \leq L_{\theta} \Big| \|D_i u^{k+1}\|-\|D_i u^k\| \Big|
	\leq L_{\theta}\|D_i\| \|u^{k+1}-u^k\|.
	$$
\end{proof}

\begin{lemma}(A subgradient lower bound for the iterates gap)\label{lemma_subgradient_bound}
	There exists a constant $\Gamma >0$, and for each $k \geq K$, there exists $\tau^{k+1} \in \partial	F(u^{k+1},v^{k+1})$, such that
	\begin{equation}\label{equ_gradientbound}
	\|\tau^{k+1}\| \leq \Gamma \|(u^{k+1},v^{k+1})-(u^{k},v^{k})\|.
	\end{equation}
\end{lemma}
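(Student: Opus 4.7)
The plan is to construct the subgradient $\tau^{k+1}$ explicitly using Lemma \ref{lemma-subdifferential} and the first-order optimality condition \eqref{equ_optimalityK} of $(\mathcal{G}_k^\delta)$, then bound its norm by combining \eqref{equ_lip} from Theorem \ref{theorem_lowerbound} with the proximal term and the already-stabilized support structure from \eqref{equ_unchangeconstaint}.

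First, fix $k \geq K$. Since $\Omega_0(u^{k+1}) = \overline{\Omega}_0$ and $\Omega_1(u^{k+1}) = \overline{\Omega}_1$, Lemma \ref{lemma-subdifferential} tells us that $\partial F(u^{k+1},v^{k+1}) = W_1 \otimes W_2$ where the $W_1$ component contains an arbitrary element from $\alpha\sum_{i\in\overline{\Omega}_0}(\ker D_i)^\perp$ plus the $\phi'(\|D_i u^{k+1}\|)$-weighted gradient sum and the data-fidelity term, while $W_2$ is a singleton. On the other hand, the first inclusion in \eqref{equ_optimalityK} asserts the existence of some $\xi^{k+1} \in \bigl(\cap_{i\in\overline{\Omega}_0}\ker D_i\bigr)^\perp = \sum_{i\in\overline{\Omega}_0}(\ker D_i)^\perp$ such that
\begin{equation*}
\xi^{k+1} = -\alpha\sum_{i\in\overline{\Omega}_1} \phi'(\|D_i u^k\|)\frac{D_i^TD_i u^{k+1}}{\|D_i u^{k+1}\|} - (u^{k+1}+v^{k+1}-f) - \rho(u^{k+1}-u^k).
\end{equation*}

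Next, I would choose $\xi^{k+1}$ itself as the representative from $\alpha\sum_{i\in\overline{\Omega}_0}(\ker D_i)^\perp$ in building $\tau_1^{k+1}\in W_1$. After substituting the optimality expression for $\xi^{k+1}$, the data-fidelity terms cancel and one obtains
\begin{equation*}
\tau_1^{k+1} = \alpha\sum_{i\in\overline{\Omega}_1}\bigl[\phi'(\|D_i u^{k+1}\|)-\phi'(\|D_i u^k\|)\bigr]\frac{D_i^TD_i u^{k+1}}{\|D_i u^{k+1}\|} - \rho(u^{k+1}-u^k).
\end{equation*}
For the second block, the equation in \eqref{equ_optimalityK} gives $\tau_2^{k+1} := (u^{k+1}+v^{k+1}-f)+\beta H^THv^{k+1}+\gamma v^{k+1} = 0$, so only $\tau_1^{k+1}$ needs to be controlled.

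The bound then follows by triangle inequality: the unit vector $D_i u^{k+1}/\|D_i u^{k+1}\|$ gives $\|D_i^T D_i u^{k+1}/\|D_i u^{k+1}\|\|\le \|D_i\|$, and Theorem \ref{theorem_lowerbound}, specifically \eqref{equ_lip}, supplies $|\phi'(\|D_i u^{k+1}\|) - \phi'(\|D_i u^k\|)|\le L_\theta \|D_i\|\|u^{k+1}-u^k\|$ for each $i\in\overline{\Omega}_1$. Combining, one obtains
\begin{equation*}
\|\tau^{k+1}\| = \|\tau_1^{k+1}\| \leq \Bigl(\alpha L_\theta \sum_{i\in\overline{\Omega}_1}\|D_i\|^2 + \rho\Bigr)\|u^{k+1}-u^k\| \leq \Gamma \|(u^{k+1},v^{k+1})-(u^k,v^k)\|,
\end{equation*}
with $\Gamma := \alpha L_\theta\sum_{i\in\overline{\Omega}_1}\|D_i\|^2 + \rho$, a constant independent of $k$.

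The main obstacle is bypassing the non-Lipschitzness of $\phi'$ at $0$, which would break the standard argument; this is precisely what Theorem \ref{theorem_lowerbound} handles, since on the stabilized support $\overline{\Omega}_1$ we stay uniformly bounded away from zero by $\theta$, placing us in the Lipschitz region of $\phi'$. The other subtle point is making the correct choice of the non-unique element in $\alpha\sum_{i\in\overline{\Omega}_0}(\ker D_i)^\perp$ so that the leftover expression after cancellation only sees differences of $\phi'$ values at consecutive iterates — without this clever matching of the $\xi^{k+1}$ freedom with the subdifferential freedom, the bound could not be closed.
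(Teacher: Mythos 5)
Your proof is correct and follows essentially the same route as the paper: construct $\tau^{k+1}$ by matching the normal-cone element forced by the optimality condition \eqref{equ_optimalityK} against the normal-cone freedom in $\partial F(u^{k+1},v^{k+1})$ so that only the difference of $\phi'$ values and the proximal term survive, then invoke \eqref{equ_lip}. The only cosmetic difference is that your constant sums $\|D_i\|^2$ over $\overline{\Omega}_1$ while the paper sums over all of $J$; both are valid choices of $\Gamma$ independent of $k$.
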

\begin{proof}
	By Lemma \ref{lemma-subdifferential}, for any $g \in \partial F(u^{k+1},v^{k+1})$, we have $g=(g_u,g_v)$, where
	\begin{equation} \label{equ_gradient}
	\left\{
	\begin{aligned}
	g_u & \in \alpha\sum_{ i \in  \overline{\Omega}_0}(\ker D_i)^\bot + \alpha\sum_{ i \in  \overline{\Omega}_1} \phi'(\|D_i u^{k+1}\|)\frac{{D_i}^TD_i u^{k+1}}{\|D_i u^{k+1}\|} + (u^{k+1}+v^{k+1}-f), \\
	g_v & = (u^{k+1}+v^{k+1}-f) + \beta H^THv^{k+1} + \gamma v^{k+1}.
	\end{aligned} \right.
	\end{equation}	
	When $k \geq K$, it follows from \eqref{equ_optimalityK} and $\left(\cap_{i \in  \overline{\Omega}_0} \ker D_i\right)^\bot=\sum_{ i \in  \overline{\Omega}_0}(\ker D_i)^\bot$ that,
	\begin{equation} \label{equ_gradient0}
	\left\{
	\begin{aligned}
	& -\alpha\sum_{ i \in  \overline{\Omega}_1} \phi'(\|D_i u^{k}\|)\frac{{D_i}^TD_i u^{k+1}}{\|D_i u^{k+1}\|}
	- (u^{k+1}+v^{k+1}-f) - \rho(u^{k+1}-u^{k}) \in \alpha\sum_{ i \in  \overline{\Omega}_0}(\ker D_i)^\bot, \\
	& (u^{k+1}+v^{k+1}-f) + \beta H^THv^{k+1} + \gamma v^{k+1} =0.
	\end{aligned} \right.
	\end{equation}
	
	Denote $\tau^{k+1}=\begin{pmatrix} \tau_1^{k+1} \\ 0 \end{pmatrix}$, where
	$$\tau_1^{k+1}=\alpha\sum_{ i \in  \overline{\Omega}_1} \left( \phi'(\|D_i u^{k+1}\|)-\phi'(\|D_i u^{k}\|) \right) \frac{{D_i}^TD_i u^{k+1}}{\|D_i u^{k+1}\|}
	- \rho(u^{k+1}-u^{k}).$$
	Combining \eqref{equ_gradient} and \eqref{equ_gradient0}, we obtain $\tau^{k+1} \in \partial	F(u^{k+1},v^{k+1})$. Moreover,
	\begin{align*}
	\|\tau^{k+1}\| = & \|\tau_1^{k+1}\| \\
	\leq & \alpha\sum_{ i \in  \overline{\Omega}_1} \left| \phi'(\|D_i u^{k+1}\|)-\phi'(\|D_i u^{k}\|) \right| \frac{\|{D_i}^T\|\|D_i u^{k+1}\|}{\|D_i u^{k+1}\|}
	+ \rho\|u^{k+1}-u^{k}\|  \\
	\mbox{ [ by \eqref{equ_lip} ] } \leq & \alpha\sum_{i \in \overline{\Omega}_1} L_{\theta}\|D_i\|^2  \|u^{k+1}-u^{k}\| + \rho \|u^{k+1}-u^{k}\| \\
	\leq  & \left( \rho+\alpha L_{\theta} \sum_{i \in J} \|D_i\|^2\right)  \|u^{k+1}-u^{k}\|  \\
	=   & \Gamma  \|u^{k+1}-u^{k}\|  \\
	\leq   & \Gamma  \|(u^{k+1},v^{k+1})-(u^{k},v^{k})\|,  \\
	\end{align*}
	where $\Gamma:= \rho+\alpha L_{\theta} \sum_{i \in J} \|D_i\|^2$.
\end{proof}

Finally, we are able to show the global convergence result.

\begin{theorem}\label{theorem_convergence}(Global convergence)
	Suppose that Assumption \ref{assum-phi} holds and $F(u,v)$ is a KL function. Let $\{(u^k,v^k)\}$ be a sequence generated by ISSAPL-ID. Then $\{(u^k,v^k)\}$ converges
	to a point $(u^*,v^*)$ which is a critical point of $F(u,v)$.
\end{theorem}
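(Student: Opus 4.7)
The plan is to apply the abstract convergence framework of \cite[Theorem 2.9]{attouch2013convergence}, which concludes that any sequence satisfying three conditions (sufficient decrease H1, subgradient-controlled iterate gap H2, and a mild continuity condition H3) together with the KL property of the objective converges to a critical point. Almost all of the machinery has already been developed; what remains is to check these hypotheses and combine them.

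First, I would verify H1 and H2 on the tail $k \geq K$ (with $K$ from \eqref{equ_unchangeconstaint}). H1 is Lemma \ref{lemma_decrease} with descent constant $\tfrac{1}{2}\min\{\rho,\gamma\}$, which holds for every $k \geq 0$. H2 is Lemma \ref{lemma_subgradient_bound}, which supplies $\tau^{k+1}\in\partial F(u^{k+1},v^{k+1})$ with $\|\tau^{k+1}\|\leq\Gamma\|(u^{k+1},v^{k+1})-(u^k,v^k)\|$ for every $k\geq K$. For H3 I would argue as follows: by Lemma \ref{lemma_bounded} the sequence $\{(u^k,v^k)\}$ is bounded, so there is a subsequence $(u^{k_j},v^{k_j})\to(\tilde u,\tilde v)$; since $\phi$ is continuous (Assumption \ref{assum-phi}(i)) the functional $F$ is continuous, hence $F(u^{k_j},v^{k_j})\to F(\tilde u,\tilde v)$, which is precisely the continuity condition required by the framework.

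Once the three conditions are in force on the tail, I would combine them with the assumed KL property of $F$ at any limit point of the bounded sequence to conclude, via the standard telescoping argument on the KL desingularizing function, that $\sum_{k\geq K}\|(u^{k+1},v^{k+1})-(u^k,v^k)\|<\infty$. Finite length implies that $\{(u^k,v^k)\}$ is Cauchy and therefore converges to some $(u^*,v^*)$. Taking $k\to\infty$ in Lemma \ref{lemma_subgradient_bound} gives $\tau^{k+1}\to 0$ with $\tau^{k+1}\in\partial F(u^{k+1},v^{k+1})$; the closedness of the graph of $\partial F$ (from lower semicontinuity of $F$ and continuity of $\phi$ on the set where $\|D_i u\|$ stays bounded away from zero, which holds on the tail by Theorem \ref{theorem_lowerbound}) then yields $0\in\partial F(u^*,v^*)$, so $(u^*,v^*)$ is a critical point.

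Essentially nothing in this theorem is delicate in its own right, since the hard work is packaged into Lemmas \ref{lemma_decrease}--\ref{lemma_subgradient_bound} and Theorem \ref{theorem_lowerbound}; the main subtlety one has to be careful about is that H2 only holds from iteration $K$ onward, but since the finite-length property is insensitive to any finite initial segment this causes no trouble. The KL assumption at the limit point is exactly what bridges the single-step descent and subgradient bounds into the global summability of step sizes, and the fact that $F$ is a KL function is justified in the Appendix referenced earlier, so invoking it here is legitimate.
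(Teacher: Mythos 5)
Your proposal is correct and follows essentially the same route as the paper: both verify the sufficient decrease condition (Lemma \ref{lemma_decrease}), the subgradient bound (Lemma \ref{lemma_subgradient_bound}), and the continuity condition via boundedness (Lemma \ref{lemma_bounded}) and continuity of $F$, then invoke the abstract framework of \cite[Theorem 2.9]{attouch2013convergence} on the tail $k \geq K$. The extra detail you supply about the telescoping KL argument and the closedness of $\partial F$ is internal to the cited theorem and does not change the structure of the argument.
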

\begin{proof}
	We need only prove the convergence of $\{(u^k,v^k)\}$ for $k \geq K$.
	
	Since $\{(u^k,v^k)\}$ is bounded (Lemma \ref{lemma_bounded}) and $F(u,v)$ is continuous, there exists a subsequence $\{(u^{k_j},v^{k_j})\}$ and $(\widetilde{u},\widetilde{v})$ such that
	\begin{equation}\label{equ_convergence}
	\{(u^{k_j},v^{k_j})\} \rightarrow (\widetilde{u},\widetilde{v}) \ \mbox{ and } \ F(u^{k_j},v^{k_j}) \to F(\widetilde{u},\widetilde{v}), \quad \mbox{ as } j \to \infty.
	\end{equation}
	
	Combining \eqref{equ_Fnonincrease}, \eqref{equ_gradientbound} and \eqref{equ_convergence}, by \cite[Theorem 2.9]{attouch2013convergence},
	we conclude that the sequence $\{(u^k,v^k)\}$ converges to $(u^*,v^*)=(\widetilde{u},\widetilde{v})$, and $(u^*,v^*)$ is a critical point of $F(u,v)$.
\end{proof}

\section{Algorithm implementation}
We present implementation details of ISSAPL-ID. We need to solve the problem $(\mathcal{G}_k)$ in \eqref{model1-shrink-approx}. It is strongly convex with linear constraints. It can be solved to any accuracy by numerous efficient and convergent algorithms like those in \cite{wang2008new,wu2010augmented,liang2015retinex}. We here elaborate on the ADMM. We set $\omega_{i}^{k}=\phi ^{'}(\Vert D_i u^k\Vert)$, $\forall i \in \Omega_{1}^{k}$ and introduce a new variable $q=(q_i)$, where $q_i \in \mathbb{R}^2$ and $i \in \Omega_{1}^{k}$. Then $(\mathcal{G}_k)$ in \eqref{model1-shrink-approx} is reformulated as
\begin{equation}\label{model1-split}
\left\{
\begin{aligned}
\min \limits_{u,v,q} \quad & \frac{1}{2}{\Vert f-u-v\Vert}^{2} + \alpha \sum\limits_{i\in\Omega_{1}^{k}}\omega_{i}^{k}\Vert q_i\Vert +\frac{\rho}{2}{\Vert u-u^{k}\Vert}^{2} + \frac{\beta}{2}{\Vert Hv\Vert}^{2} + \frac{\gamma}{2}{\Vert v\Vert}^{2}\\
\st \quad & q_i=D_i u,\ \ \ \forall i \in \Omega_{1}^{k},\\
& D_i u=0,\ \ \ \forall i \in \Omega_{0}^{k}.
\end{aligned} \right.
\end{equation}
The augmented Lagrangian function for the above constrained problem is defined as
\begin{align*}
L(u,v,q;\mu)= & \frac{1}{2}{\Vert f-u-v\Vert}^{2} + \alpha\sum\limits_{i\in\Omega_{1}^{k}}\omega_{i}^{k}\Vert q_i\Vert + \frac{\rho}{2}{\Vert u-u^k\Vert}^{2} + \frac{\beta}{2}{\Vert Hv\Vert}^{2} + \frac{\gamma}{2}{\Vert v\Vert}^{2} \\
& + \sum_{i\in \Omega_1^k} \langle \mu_i, D_i u-q_i \rangle + \sum_{i\in \Omega_0^k} \langle \mu_i, D_i u \rangle \\
& + \frac{r_1}{2}\sum_{i\in \Omega_1^k} \|D_i u-q_i\|^2 + \frac{r_2}{2}\sum_{i\in \Omega_0^k} \|D_i u\|^2,
\end{align*}
where $r_1,r_2>0$ are penalty parameters and $\mu_i \in \mathbb{R}^2$, $i \in J$ are lagrange multipliers. For convenience, we let $r_1=r_2=r$. Applying the ADMM yields the following algorithm.
\begin{mdframed}
	\textbf{ADMM:} the alternating direction method of multipliers for solving $(\mathcal{G}_k)$ in \eqref{model1-shrink-approx}
	\begin{enumerate}
		\item Input $u^k,\omega_i^k,\Omega_0^k,\Omega_1^k,r$. Initialize $(u^{k,0},v^{k,0})=(u^{k},v^{k}),\mu^{k,0}=0$.
		\item \label{step-ADMM}For $t=0,1,\dots$, compute
		\begin{numcases}{}
		q^{k,t+1}=\arg \min_{q}L(u^{k,t},v^{k,t},q;\mu^{k,t});  \label{sub_y}\\
		(u^{k,t+1},v^{k,t+1})=\arg \min_{u,v}L(u,v,q^{k,t+1};\mu^{k,t}); \label{sub_x}\\
		\mu^{k,t+1}_i = \begin{cases}
		\mu^{k,t}_i+ r D_iu^{k,t+1}, & \mbox{if } i \in \Omega_0^k;  \\
		\mu^{k,t}_i+ r (D_iu^{k,t+1}-q^{k,t+1}_i), & \mbox{if } i \in \Omega_1^k.
		\end{cases} \nonumber
		\end{numcases}
		
		Until a termination criterion is met.
		\item Output $(u^{k+1},v^{k+1})=(u^{k,t+1},v^{k,t+1})$.
	\end{enumerate}
\end{mdframed}

The two subproblems in the above algorithm are calculated as follows.
\begin{enumerate}
	\item The $q$-subproblem in \eqref{sub_y}: we can simplify \eqref{sub_y} as
	\begin{equation*}
	q^{k,t+1} = \arg \min_{q} \sum_{ i \in \Omega_1^k} \left( \alpha w^k_i \|q_i\| + \frac{r}{2} \left\| q_i -\left( D_iu^{k,t} + \frac{1}{r} \mu^{k,t}_i \right)\right \|^2 \right).
	\end{equation*}
	According to \cite{wu2010augmented}, the solution of the above problem is
	$$q_i^{k,t+1}=\max \left\{ 1 - \frac{\alpha w_{i}^{k}}{r\| D_iu^{k,t}+\frac{1}{r}\mu_i^{k,t} \|},\ 0 \right\}
	\left(D_iu^{k,t} + \frac{1}{r}\mu_i^{k,t}\right), \quad \forall i \in \Omega_{1}^{k}.$$
	
	\item The $(u,v)$-subproblem in \eqref{sub_x}: we introduce $\widetilde{q}^{k,t+1}=(\widetilde{q}^{k,t+1}_i)$ with $\widetilde{q}^{k,t+1}_i=0 \in \mathbb{R}^2, \forall i \in \Omega_{0}^{k}$ and define $\overline{q}^{k,t+1}=(q^{k,t+1},\widetilde{q}^{k,t+1})$. Then we can simplify \eqref{sub_x} as
	\begin{equation*}
	\begin{aligned}
	(u^{k,t+1},v^{k,t+1}) = \arg \min_{u,v} \bigg\{ & \frac{1}{2}{\Vert f-u-v\Vert}^{2} + \frac{\rho}{2}{\Vert u-u^k\Vert}^{2} + \frac{\beta}{2}{\Vert Hv\Vert}^{2} + \frac{\gamma}{2}{\Vert v\Vert}^{2} \\
	& + \frac{r}{2} \sum_{i\in J} {\Vert D_i u\Vert}^{2} -r \sum_{i\in J} \langle \overline{q}_i^{k,t+1}, D_i u \rangle +\sum_{i\in J} \langle \mu^{k,t}_i, D_i u \rangle \bigg\}.
	\end{aligned}
	\end{equation*}
	
	This is a quadratic optimization problem and its optimality condition gives a linear system 	
	\begin{equation} \label{equ_optimalx}
	\left\{
	\begin{aligned}
	& (1+\rho+r D^TD)u + v - f - \rho u^{k} + D^T(\mu^{k,t}-r \overline{q}^{k,t+1})=0, \\
	& u + (1+\gamma+\beta H^TH)v-f=0.
	\end{aligned}
	\right.
	\end{equation}
	Here we use the periodic boundary condition for the discrete difference, then \eqref{equ_optimalx} can be solved by the fast Fourier transforms
	(FFTs). One can refer to \cite{wang2008new,wu2010augmented} for calculation details.
	If the Neumann boundary condition is used, \eqref{equ_optimalx} can be solved by discrete cosine transforms (DCTs) (refer to \cite{ng1999fast}) or conjugate gradient (CG) method.
\end{enumerate}

\section{Experimental results} \label{sec_experiment}
In this section, we present our experimental results on comparing our method with several state-of-the-art approaches, i.e., four two-stage methods \cite{cai2013two,duan2015l_,chang2017new,Chan2018Convex} and one typical level set method \cite{li2011level}.
We implemented our algorithm in MATLAB R2016a, and the codes of the compared methods were provided by their authors. All the experiments are performed under Windows 10 and MATLAB R2016a running on a desktop (Intel(R) Core(TM) i7-6700 CPU @ 3.40GHz 3.40GHz, 8.00G RAM).

\subsection{The test images, compared methods and result assessment metrics}\label{sec_experiment_compared}
The test images include two synthetic ones in Section \ref{experiment1}, four simulated and real medical images in Section \ref{experiment2}, and 12 real brain MRI images involved in a 3D brain MRI dataset in Section \ref{experiment3}. The simulated and real images in Section \ref{experiment2} and Section \ref{experiment3}, as pointed out in \cite{duan2015l_,chang2017new,li2011level}, are assumed to be product approximations by some piecewise constant functions and smooth functions. That is, an observation $\overline{f}$ is written as
\begin{equation}\label{equ_product}
\overline{f}\approx\overline{u}\times \overline{v},
\end{equation}
with a piecewise constant $\overline{u}$ and a smooth $\overline{v}$.
In order to fit the addition model \eqref{equ_model1}, we therefore convert the image into the logarithmic domain, i.e.,
$$f=\text{log}(\overline{f}),u=\text{log}(\overline{u}),v=\text{log}(\overline{v}),$$
and \eqref{equ_product} becomes
$$f\approx u+v.$$
After solving the model \eqref{equ_model1}, we then reconstruct the piecewise constant part and the smooth part by an exponential transformation.

As in the literature, the five compared methods are abbreviated as CCZ \cite{cai2013two}, L0MS \cite{duan2015l_}, HoL0MS \cite{chang2017new} ,CNCS \cite{Chan2018Convex} and LIC \cite{li2011level} in the following. We mention that different methods have different applicabilities. The code of LIC provided by its authors does not apply to five-phase segmentation, thus LIC is not compared in the test in Figure \ref{fig1_geometry9fn}. L0MS and HoL0MS are two similar approaches and the latter is constructed in a 3D formulation, so HoL0MS is not compared in Section \ref{experiment1} and Section \ref{experiment2}, while L0MS is not compared in Section \ref{experiment3}. As for the brain segmentation test in Section \ref{experiment3}, we do not include LIC for comparison.

The results are compared in two aspects: the inhomogeneity correction results and the segmentation results. In addition to assessing the results visually, we quantitatively evaluate the results in Section \ref{experiment1} and Section  \ref{experiment3}, where ground truth for the test images are available. (Please note that the following widely used quantitative indices can be computed, only for tests with given ground truth.) Like \cite{duan2015l_,chang2017new}, we adopt the coefficient of variations (CV) \cite{likar2001retrospective} to measure the degree of intensity inhomogeneity in a region $\text{T}$, which is defined as
$$\text{CV(T)}=\frac{\sigma(\text{T})}{\mu(\text{T})},$$
where $\mu(\text{T})$ and $\sigma(\text{T})$ are the mean and the standard deviation of the intensities in $\text{T}$. A result with smaller CV value means a better inhomogeneity correction. The segmentation results are evaluated by the following Jaccard similarity (JS) metric \cite{shattuck2001magnetic}
$$\text{JS}(S_1,S_2)=\frac{|S_1 \cap S_2|}{|S_1 \cup S_2|} \times 100\%,$$
where $|\cdot|$ denotes the area of a region, $S_1,S_2$ are the region in the ground truth and the corresponding segmentation result by an algorithm, respectively. Clearly, a result with JS value closer to 1 means a better segmentation.

\subsection{The stopping conditions, parameter settings, and convergence behavior of our algorithm}

We first give stopping conditions. The compared five algorithms are terminated using their default stopping criterions or tuned for them to get good results. For our algorithm, the stopping condition for the inner loop is whether $\frac{\left\| (u^{k,t+1},v^{k,t+1})-(u^{k,t},v^{k,t}) \right\|}{\left\| (u^{k,t},v^{k,t}) \right\|}\leq \epsilon_{\text in}=10^{-4}$ or the ADMM iteration number reaching MAXIT\textunderscore in=100. The outer loop is stopped, if $\frac{\left\| (u^{k+1},v^{k+1})-(u^k,v^k) \right\|}{\left\| (u^k,v^k) \right\|}\leq \epsilon_{\text out}=10^{-4}$ or the iteration number reaches MAXIT\textunderscore out=10.

The parameter settings are as follows. In each method, there are several parameters to be tuned. More specifically, as pointed out in the literature, the scale parameter $\sigma$ and the time step $\Delta t$ of LIC \cite{li2011level}, the fidelity parameter $\lambda$ and the smooth parameter $\mu$ of CCZ \cite{cai2013two}, the regularization parameter $\alpha$ and an algorithm parameter $k$ of L0MS \cite{duan2015l_}, two model parameters $\alpha$ and $\mu$ of HoL0MS \cite{chang2017new}, the regularization parameters $\lambda$ and $T$ of CNCS \cite{Chan2018Convex}, need to be tuned. In all the following experiments, we adopt their default parameters (e.g., HoL0MS in Section \ref{experiment3}) or adjusted carefully these parameters to achieve best results, by CV and JS values (if computable, i.e., the ground truth is available) or visual effect. Note that, the above parameter symbols are directly taken from their papers. Thus one same symbol may correspond to different model or algorithm parameters in different methods.

We now discuss the parameter sensitivity and settings of our algorithm, where we use the widespread potential function: $\phi(t)=t^p, 0<p<1$. According to our experiments, the model parameter $\gamma$ and the algorithm parameter $\rho$ can be simply fixed as $\gamma=10^{-8},\ \rho=10^{-8}$. Figure \ref{fig0_sensitive} gives the sensitivity test of parameters $p, r, \alpha, \beta$ on the second synthetic test image in Figure \ref{fig1_geometry1fn} and the real MRI dataset in Figure \ref{fig3_testimages}. As shown in Figure \ref{fig0_sensitive}(a), our method is robust to $p$, and we therefore set $p=0.5$ in the following experiments. For different test images, our algorithm performs well with $r$ values in different but reasonably large intervals; see Figure \ref{fig0_sensitive}(b). As there is also a large common interval shown in Figure \ref{fig0_sensitive}(b), we set $r=10$ for all the following experiments. As shown in Figure \ref{fig0_sensitive}(c-d), the two model parameters $\alpha, \beta$ have large ranges (especially $\beta$) to give good results, but the range is dependent on specific images. In the following experiments, we tuned $\alpha, \beta$ for different images.

 \begin{figure}[H]
	\centerline{
		\begin{tabular}{c@{}c@{}c@{}c}
			\includegraphics[width=1.3in]{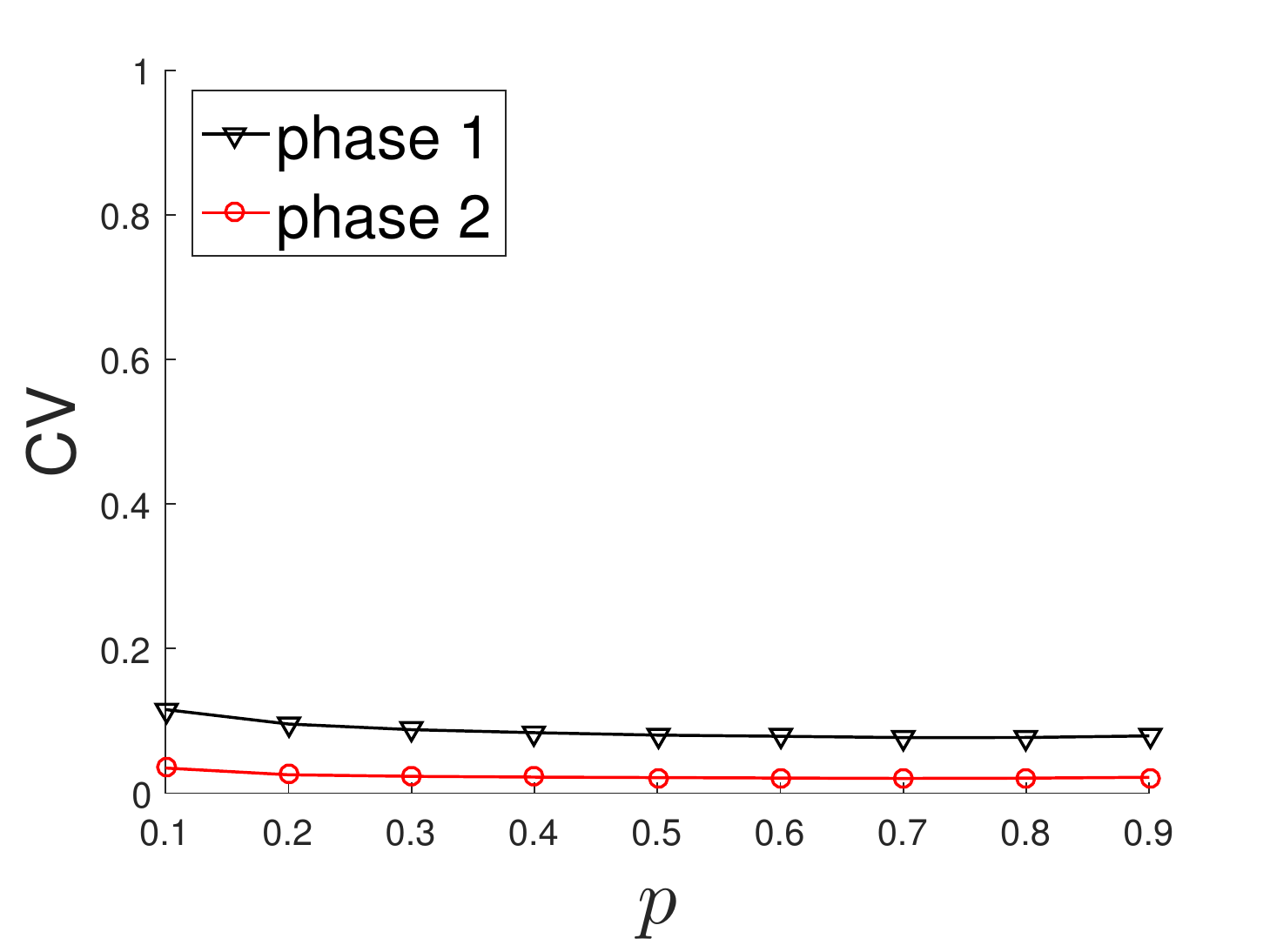} \ &				
			\includegraphics[width=1.3in]{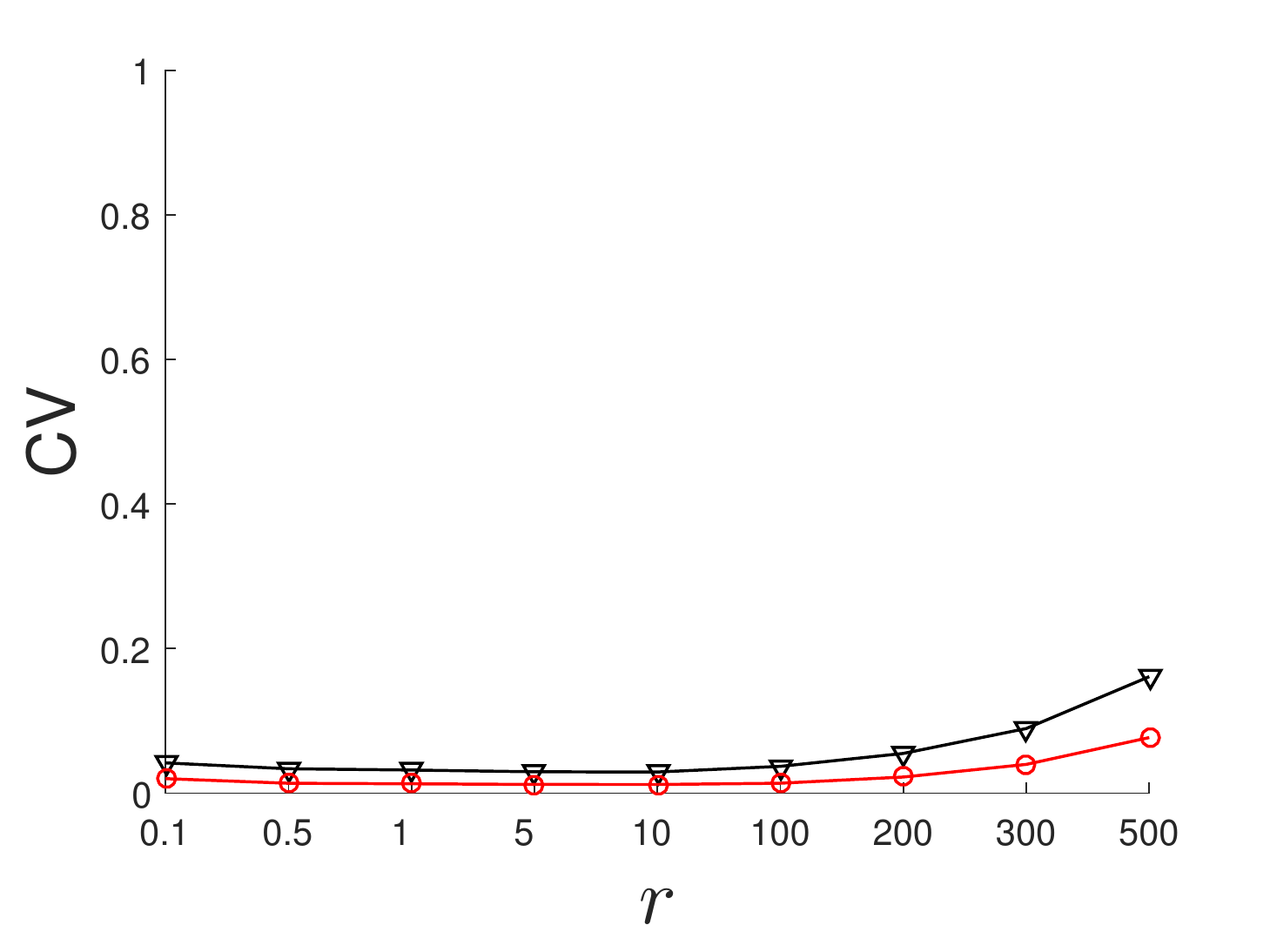} \ &				
			\includegraphics[width=1.3in]{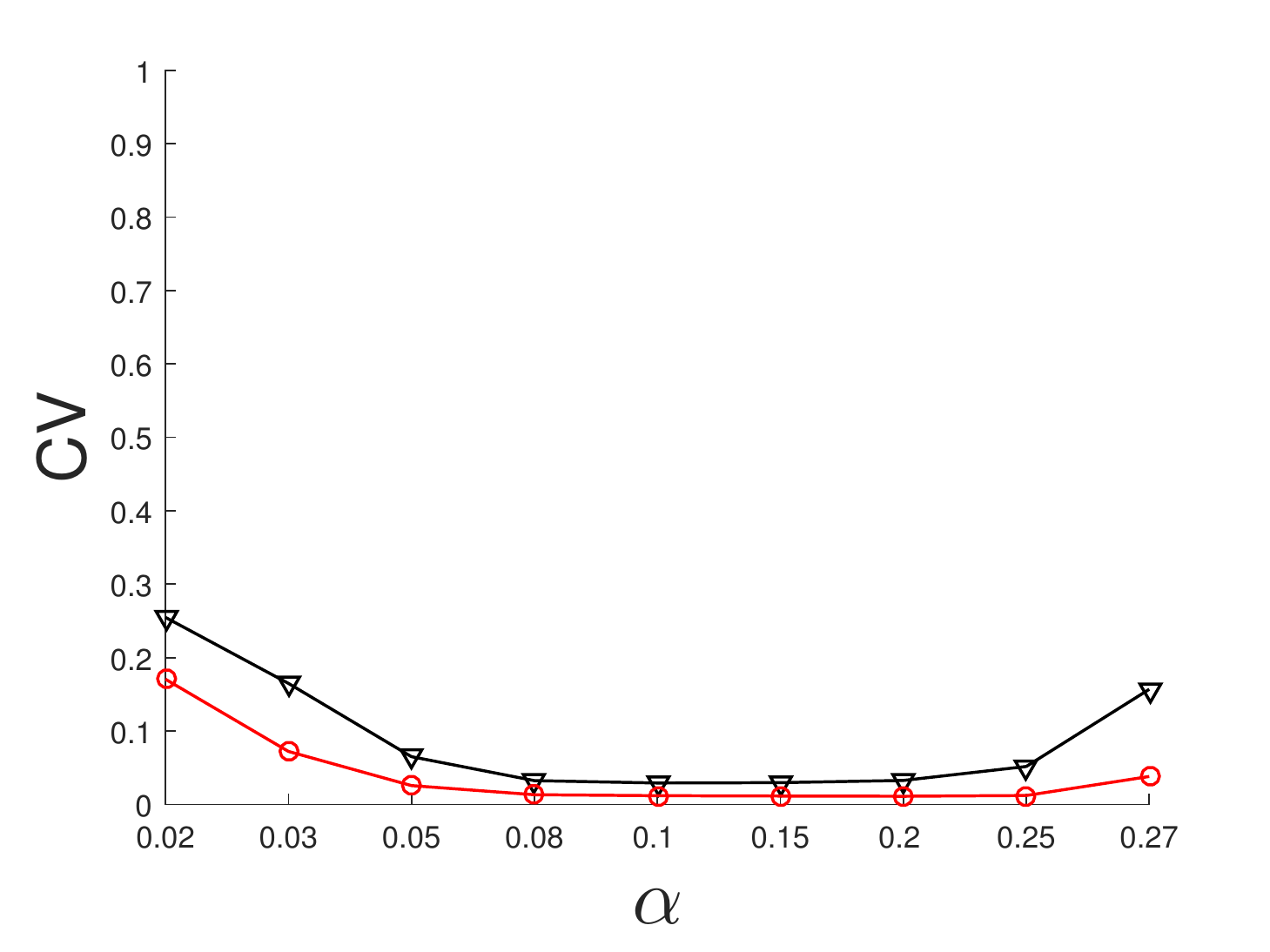} \ &				
			\includegraphics[width=1.3in]{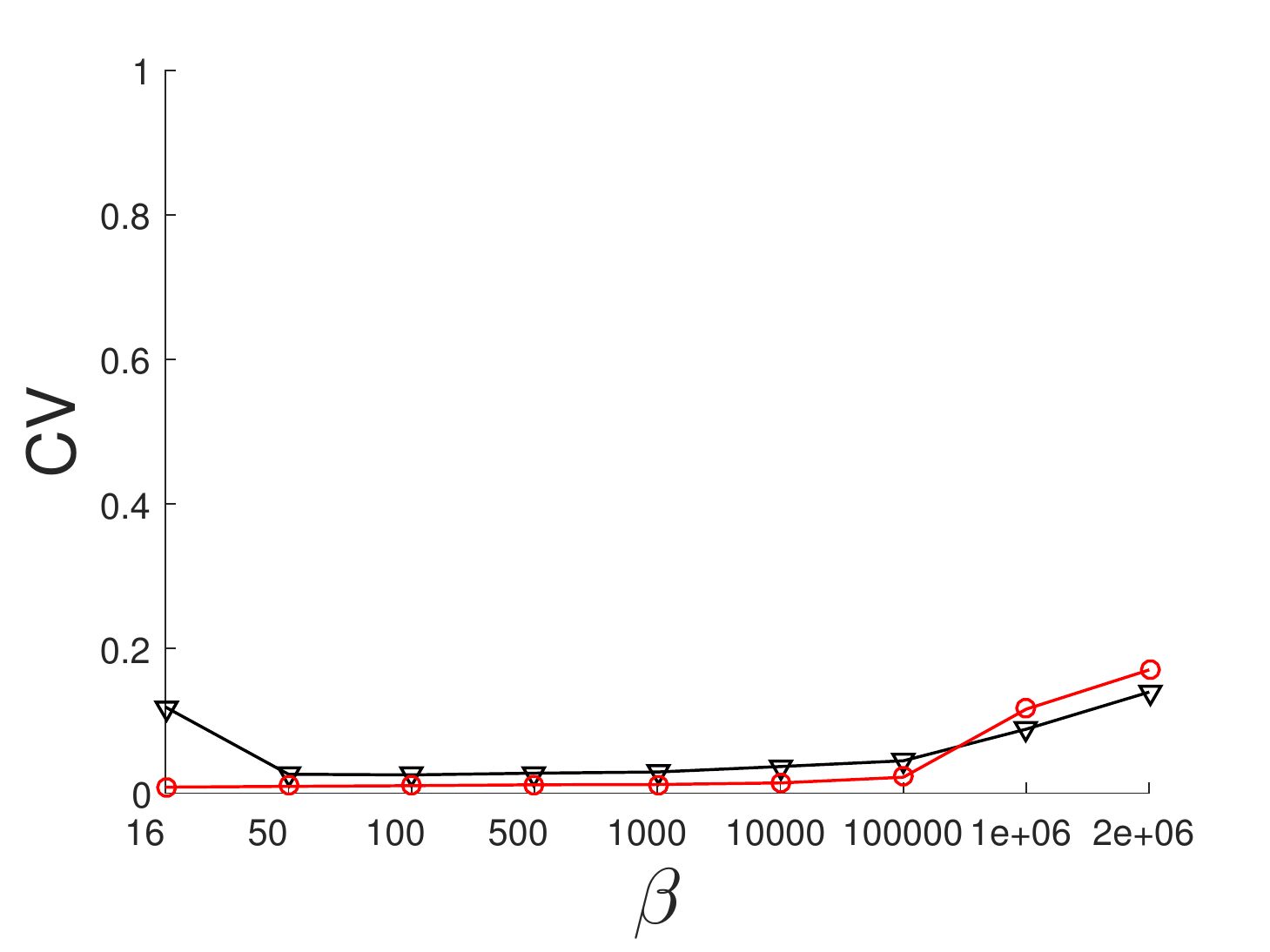} \\			
			\includegraphics[width=1.3in]{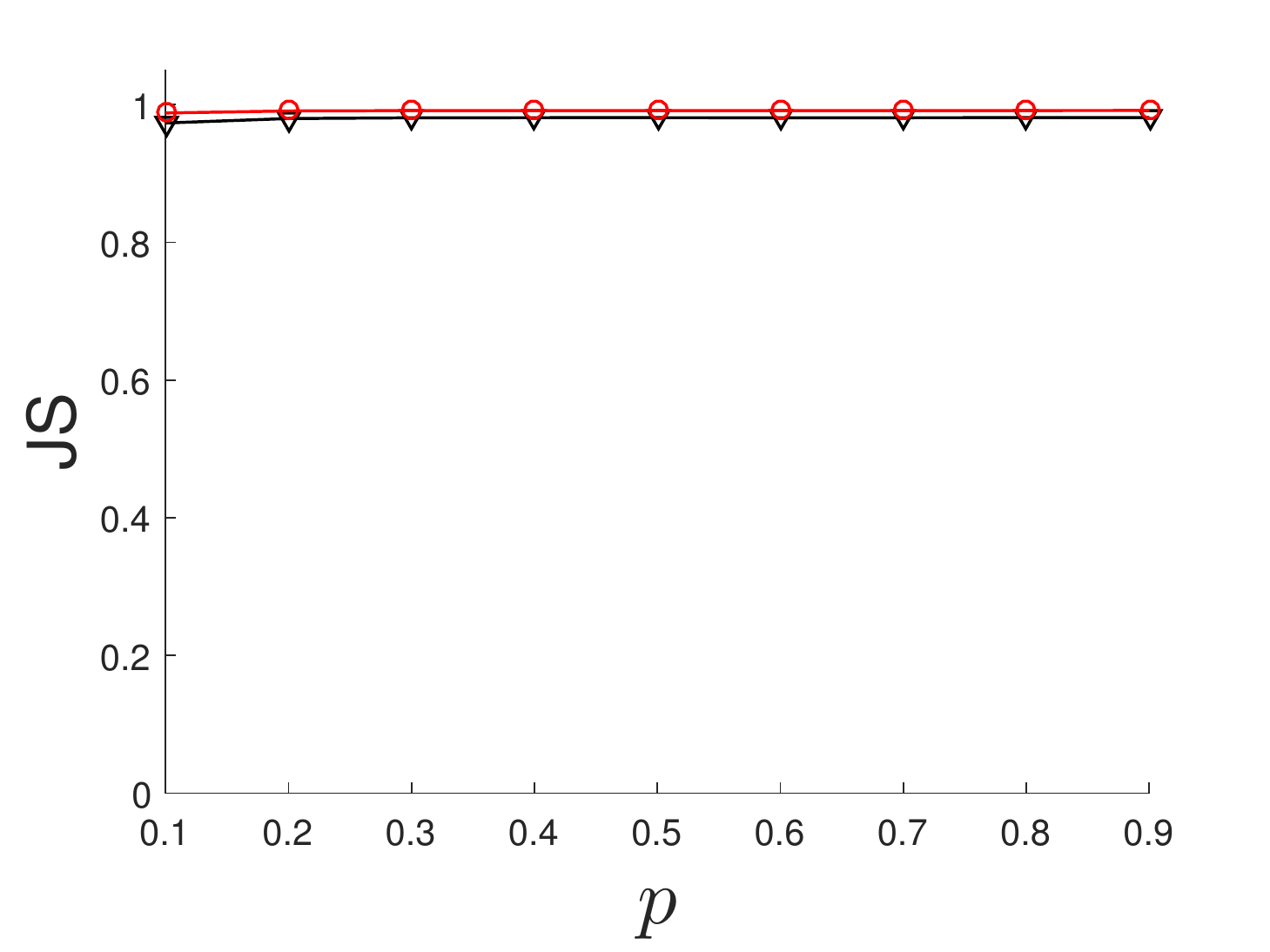} \ &
			\includegraphics[width=1.3in]{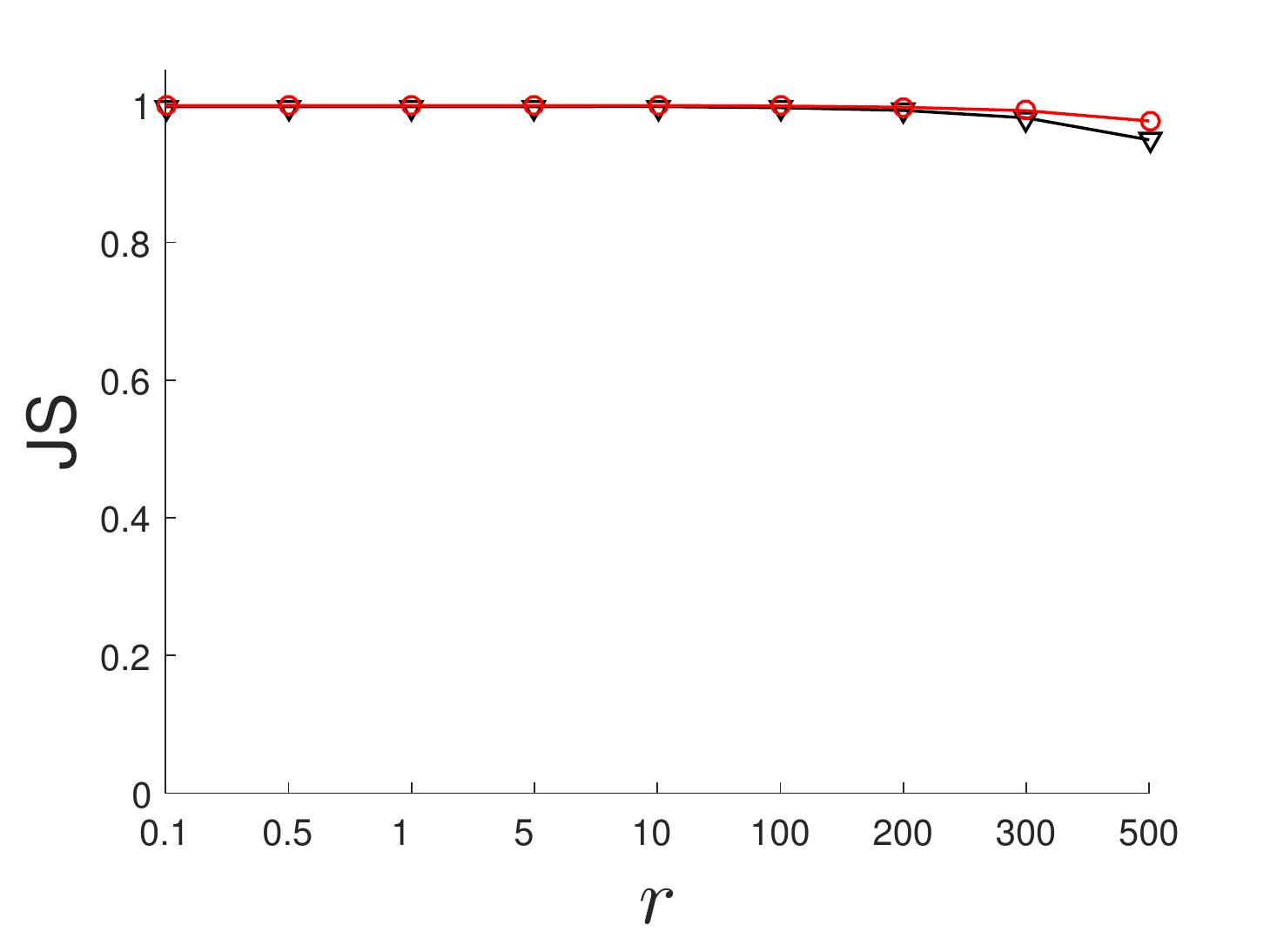} \ &
			\includegraphics[width=1.3in]{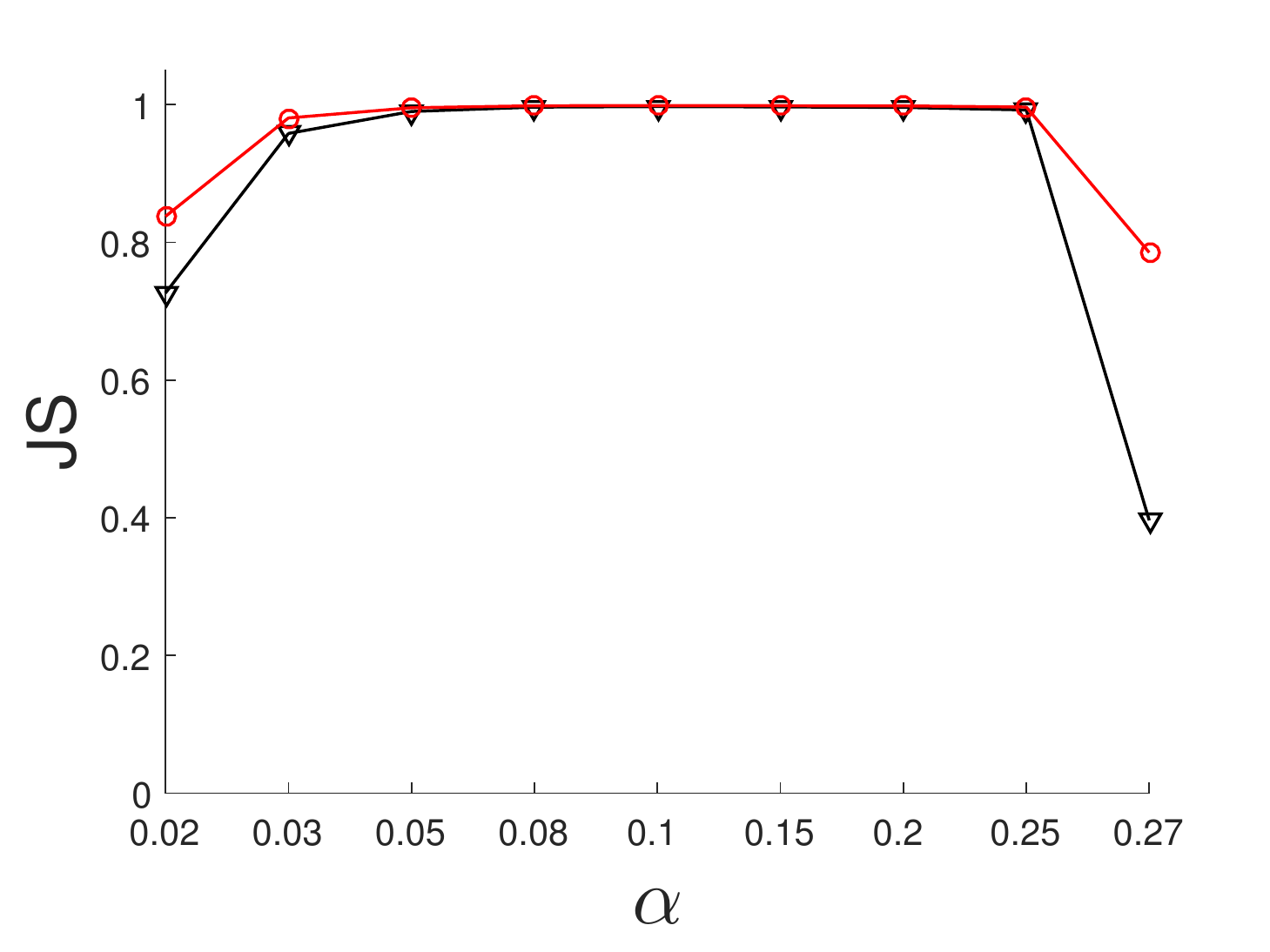} \ &
			\includegraphics[width=1.3in]{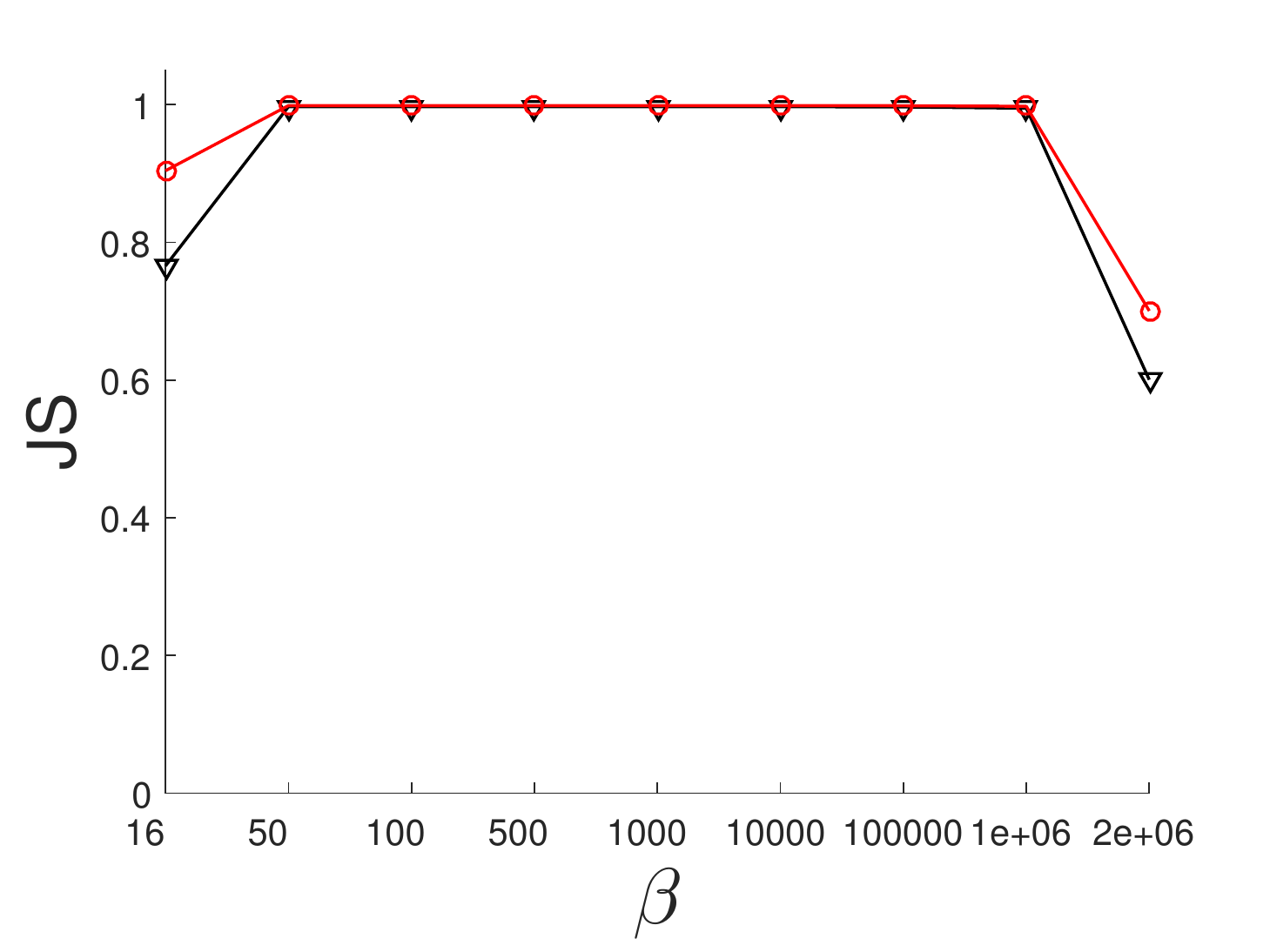} \\
			\includegraphics[width=1.3in]{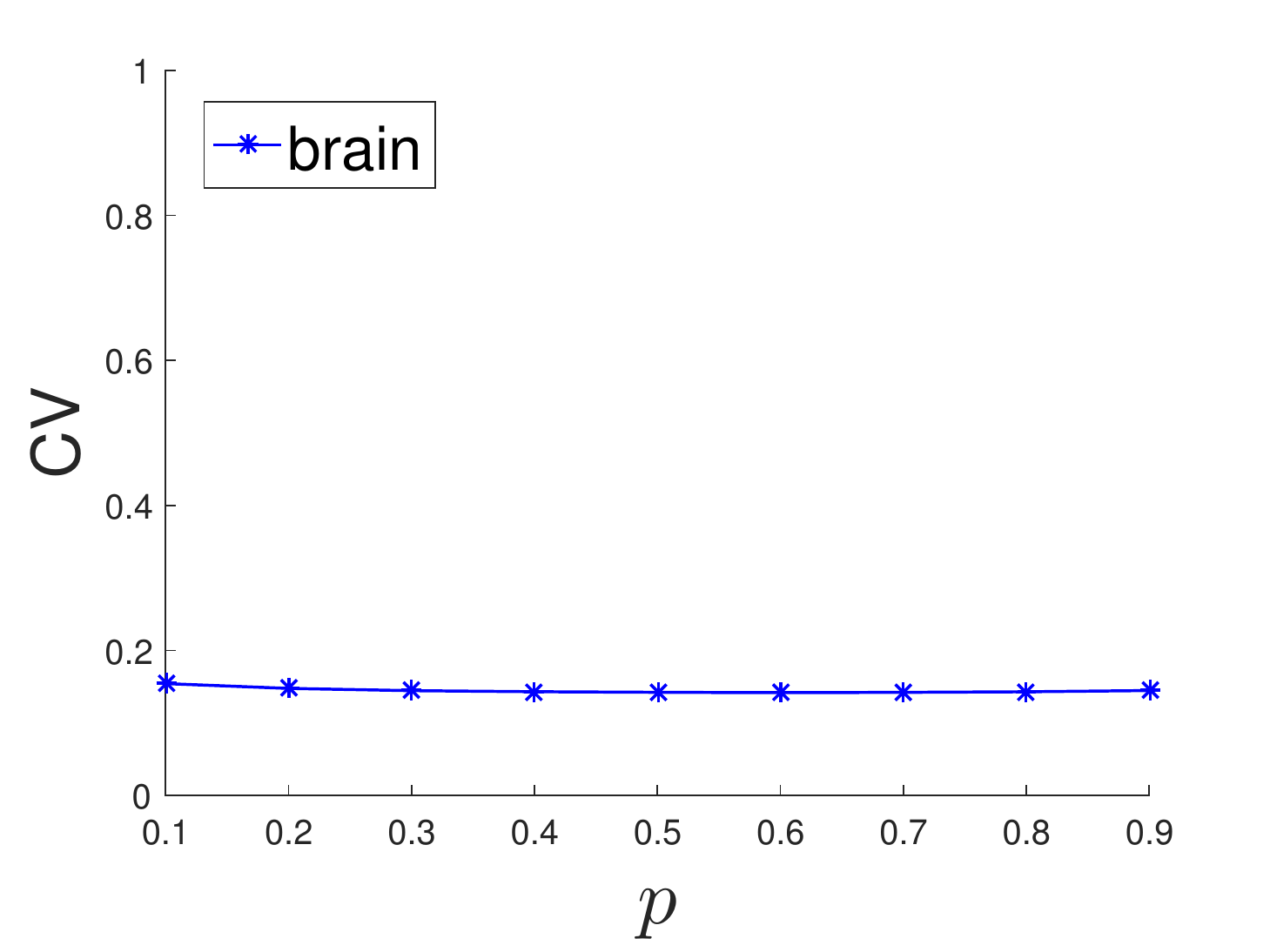} \ &				
			\includegraphics[width=1.3in]{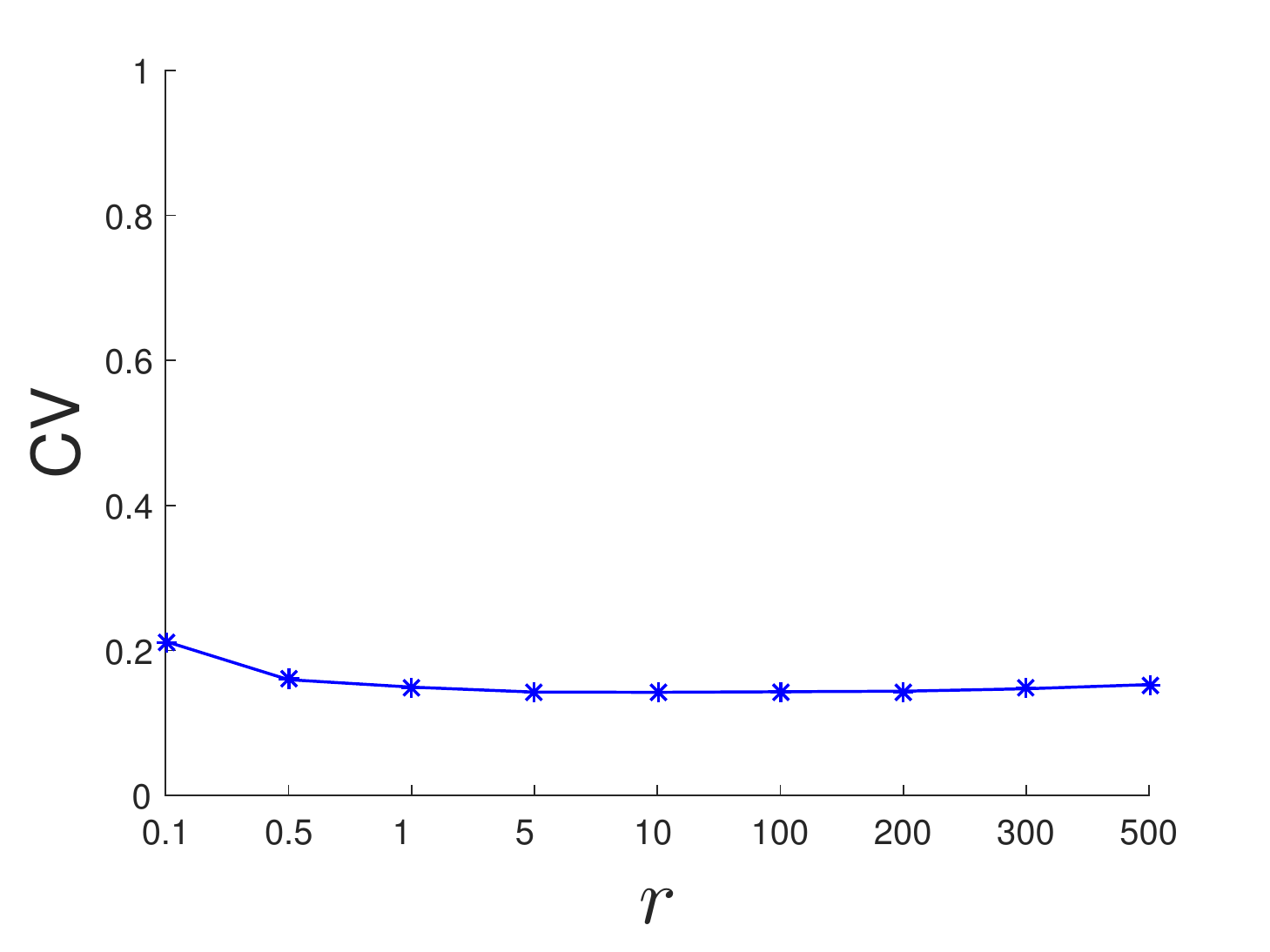} \ &				
			\includegraphics[width=1.3in]{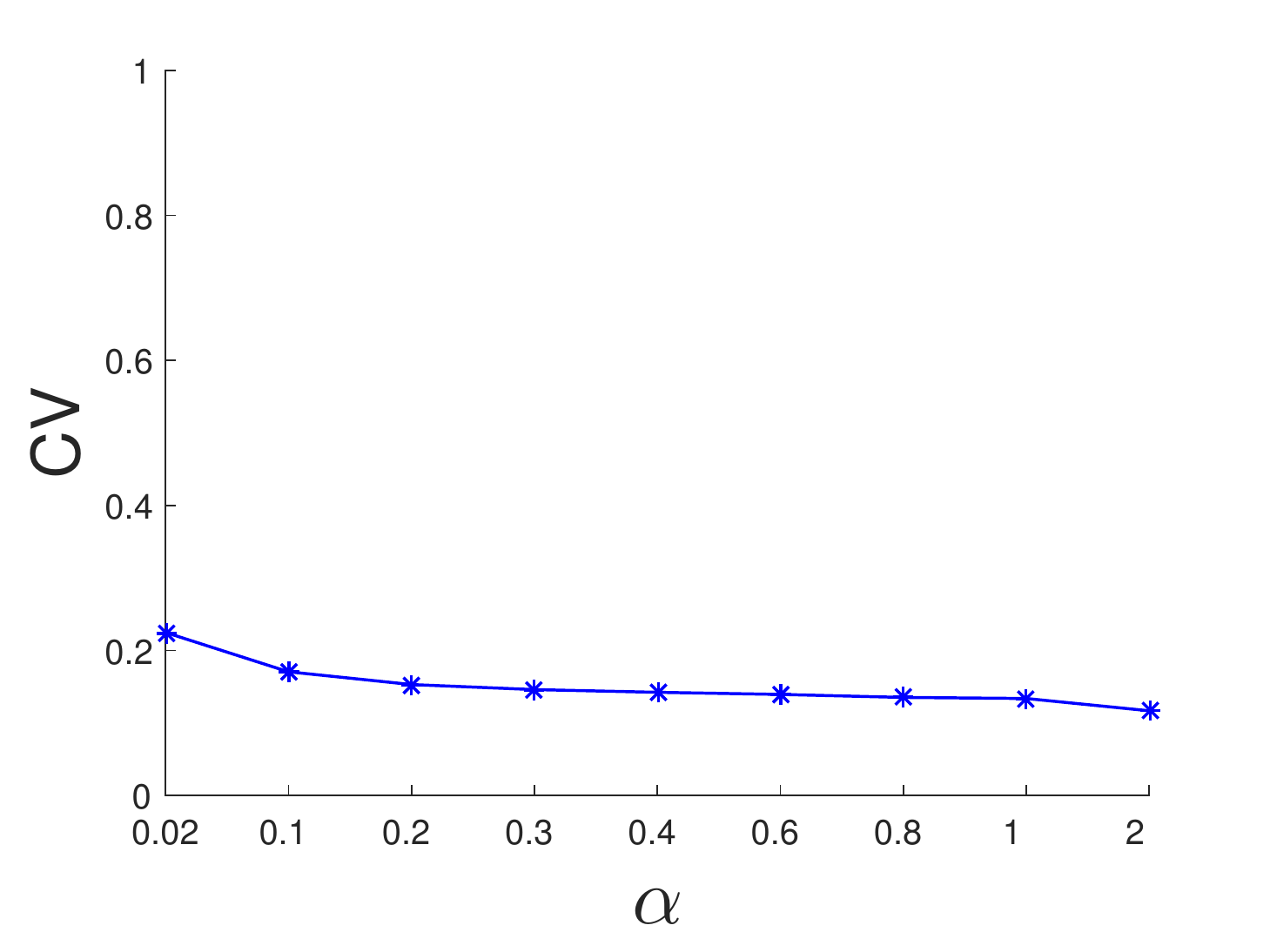} \ &	
			\includegraphics[width=1.3in]{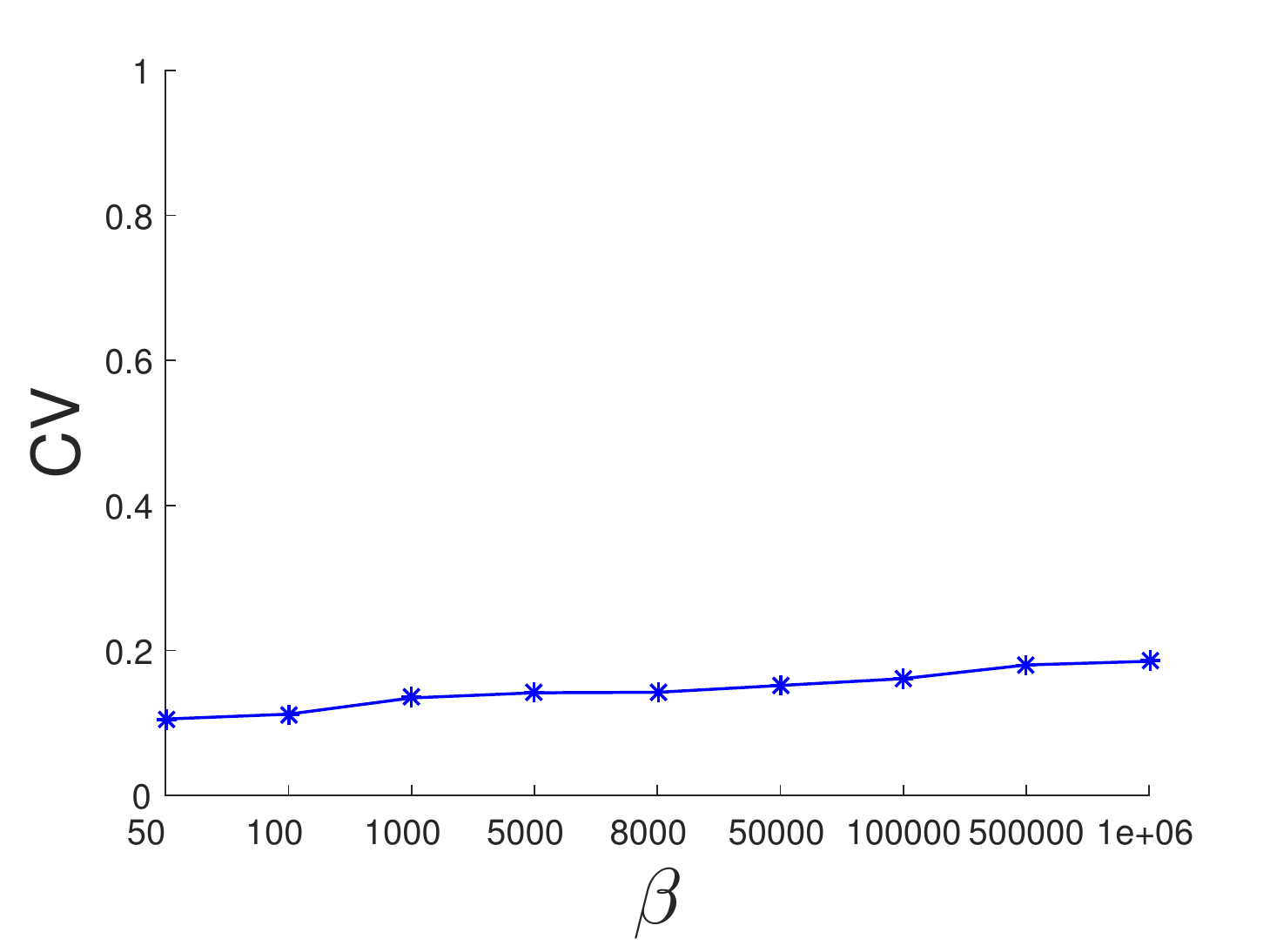} \\	 			
			\includegraphics[width=1.3in]{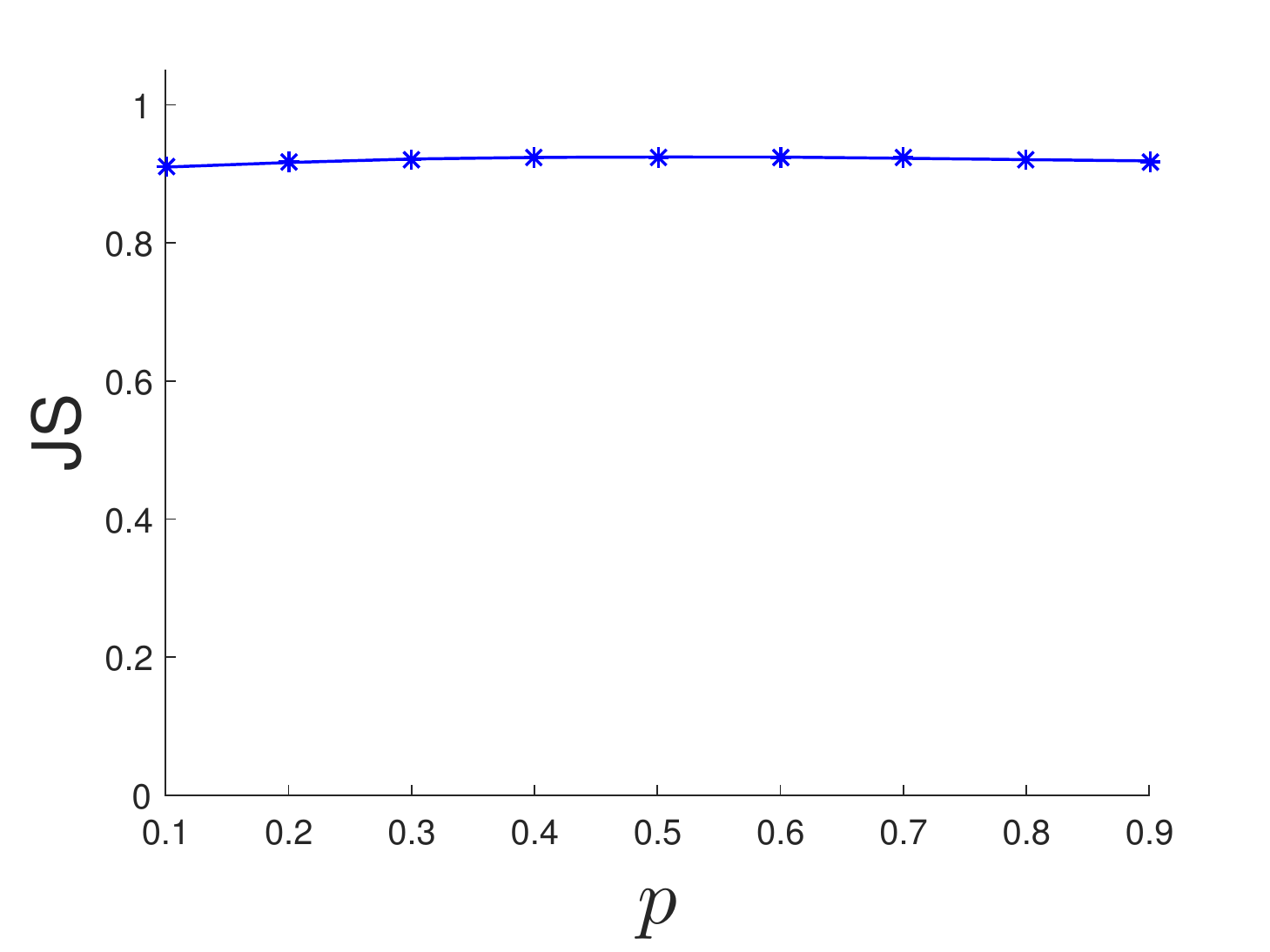} \ &
			\includegraphics[width=1.3in]{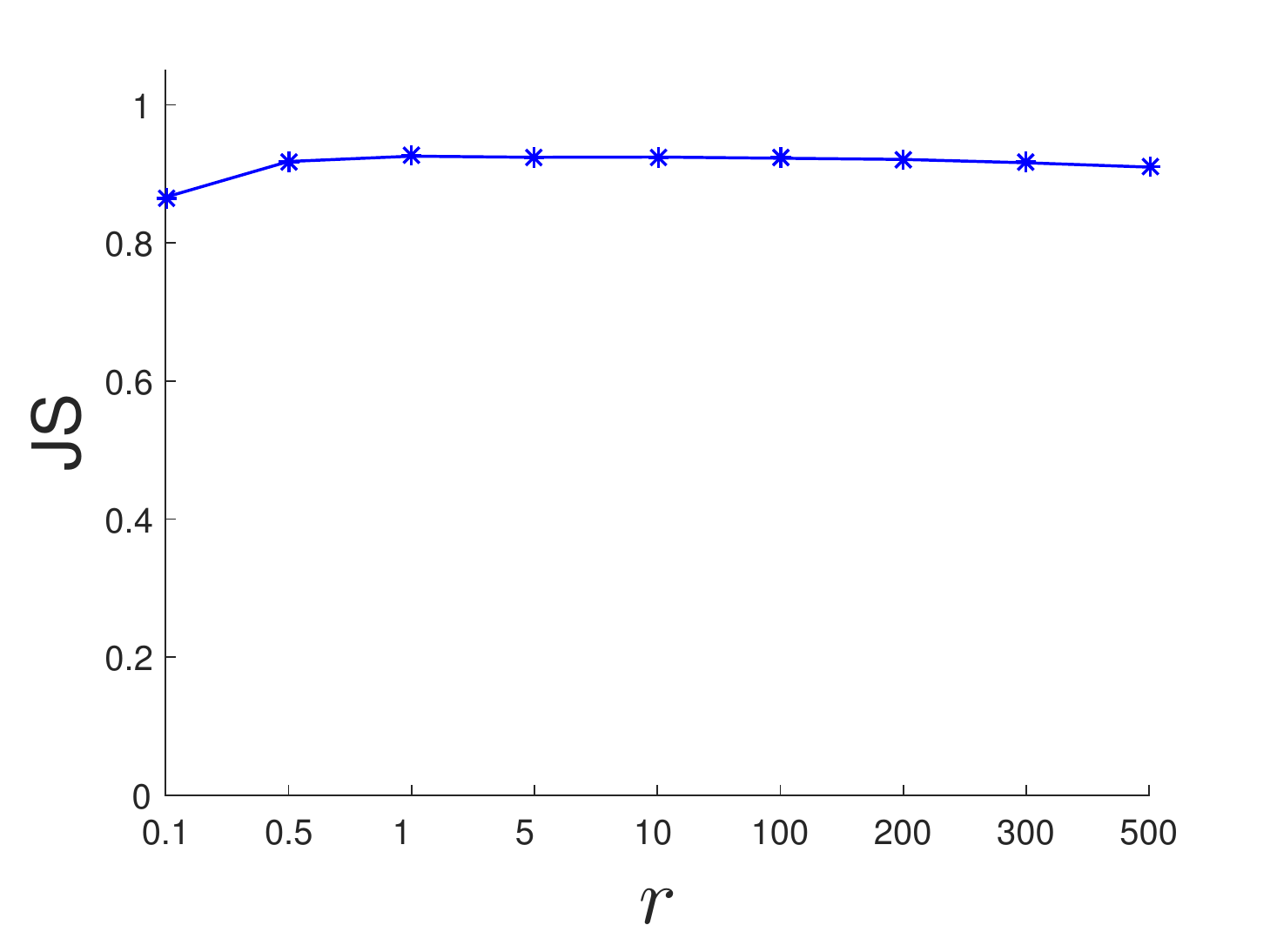} \ &
			\includegraphics[width=1.3in]{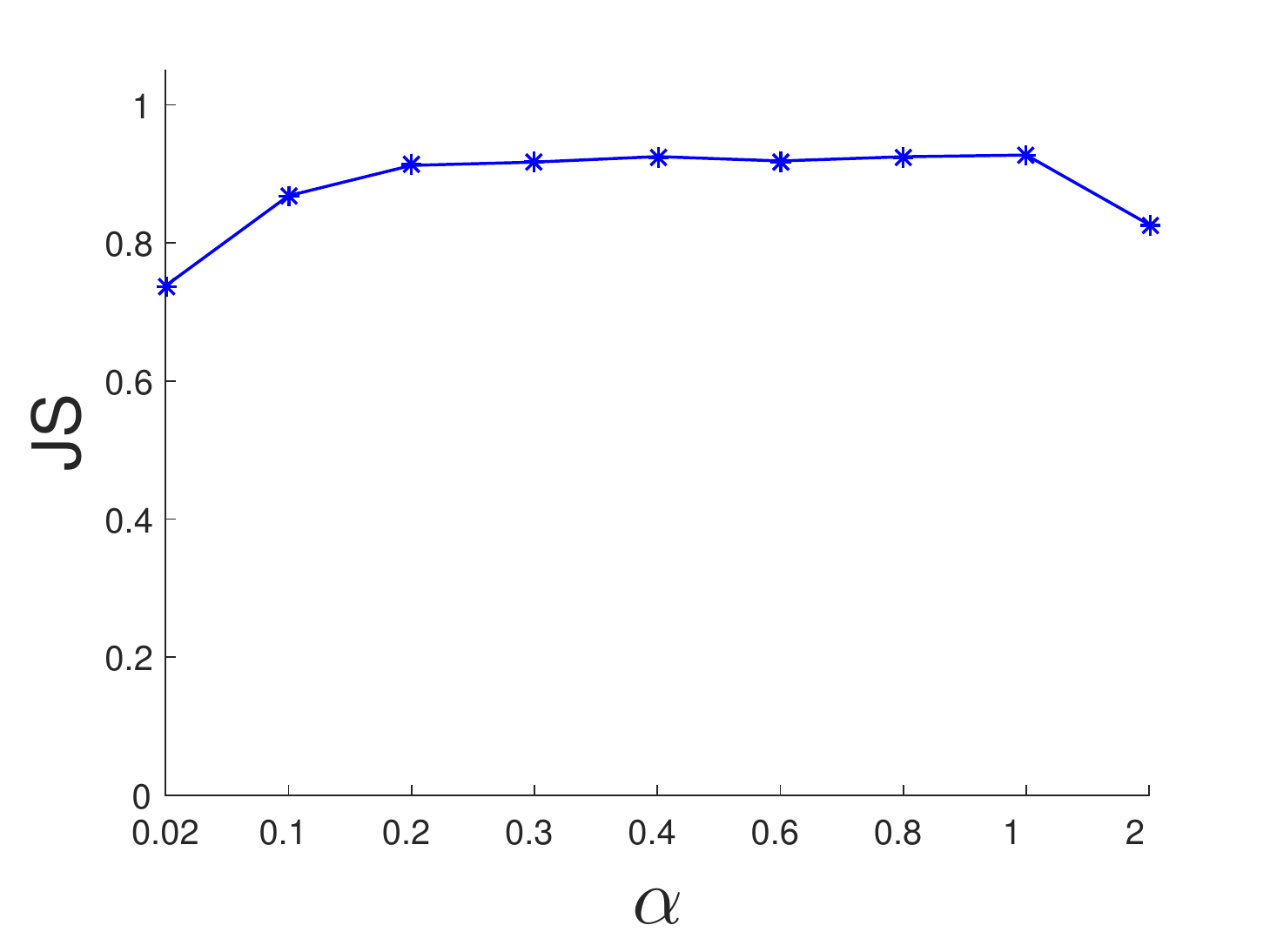} \ &
			\includegraphics[width=1.3in]{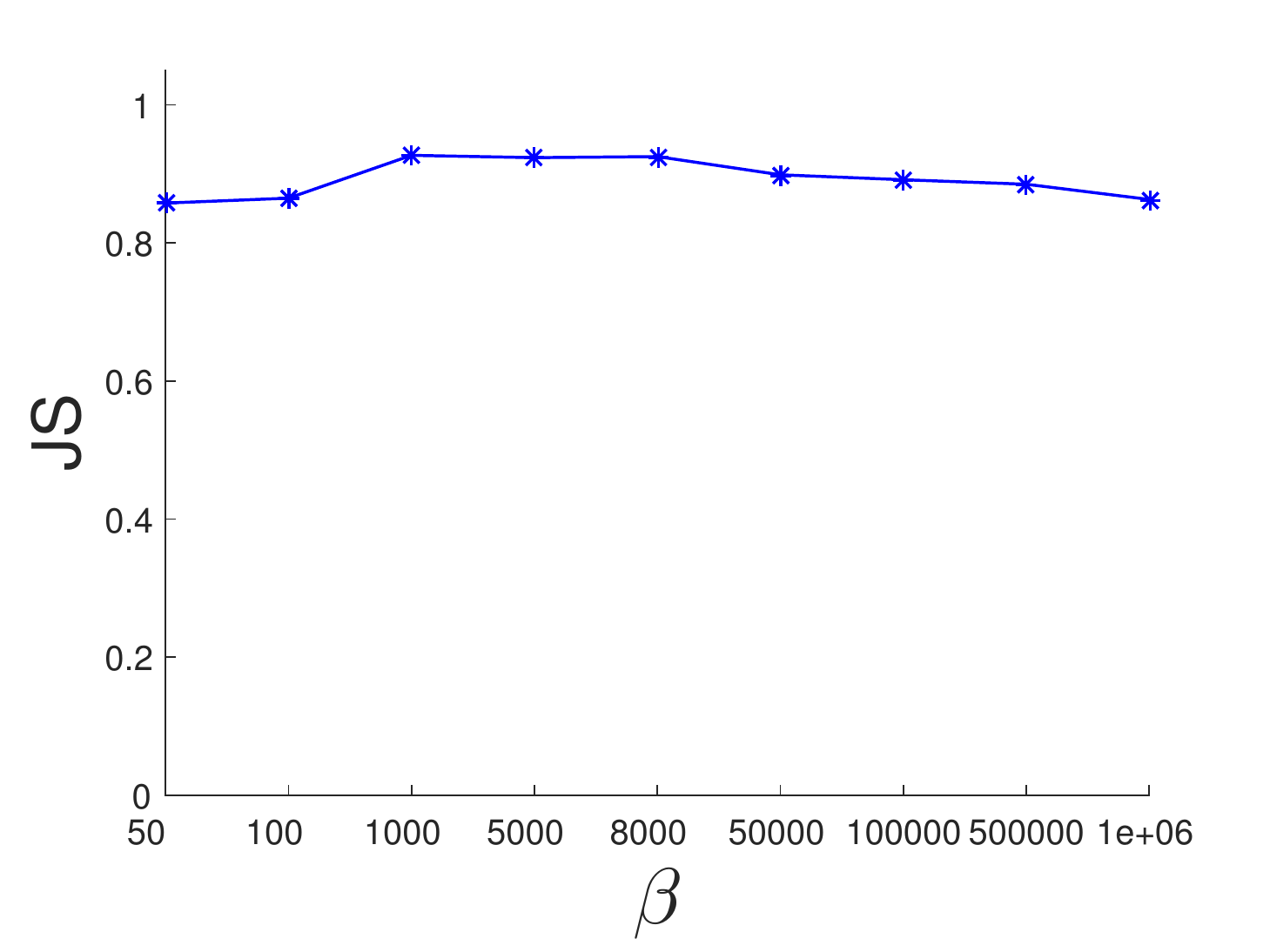} \\
			(a) & (b) & (c) & (d) \\
		\end{tabular}
	}
	\caption{\small\sl Sensitivity test of parameters $p,r,\alpha,\beta$ of our algorithm: CV values on the corrected images and JS values of the segmentation results with different values of the parameters. The first, second, third and fourth columns are the results for parameter $p,r,\alpha,\beta$, respectively. The first two rows: results on the second test image in Figure \ref{fig1_geometry1fn}. The last two rows: results (mean values over the 12 test images) on the dataset in Figure \ref{fig3_testimages}.}
	\label{fig0_sensitive}
\end{figure}

We check the convergence behavior of our algorithm by several tests, i.e., the second test image in Figure \ref{fig1_geometry1fn}, the second test image in Figure \ref{fig1_geometry9fn}, the second test image in Figure \ref{fig2_twophase} and the second test image in Figure \ref{fig2_mri14}. As shown in Figure \ref{fig0_convergence}, the evolution curves for the 4 tests are similar and demonstrated our theoretical analysis. The objective value $F(u^k,v^k)$ is decreasing and converges, which verifies the theoretical results in Lemma \ref{lemma_decrease}. The increments $\Vert (u^{k+1},v^{k+1})-(u^{k},v^{k}) \Vert$ converge to zero, which is consistent with Lemma \ref{lemma_bounded}. The support set sequence $\#\Omega_1(u^k)$ is monotonically decreasing and converges too.

\begin{figure}[H]
	\centerline{
		\begin{tabular}{c@{}c@{}c@{}c}
			\includegraphics[width=1.3in]{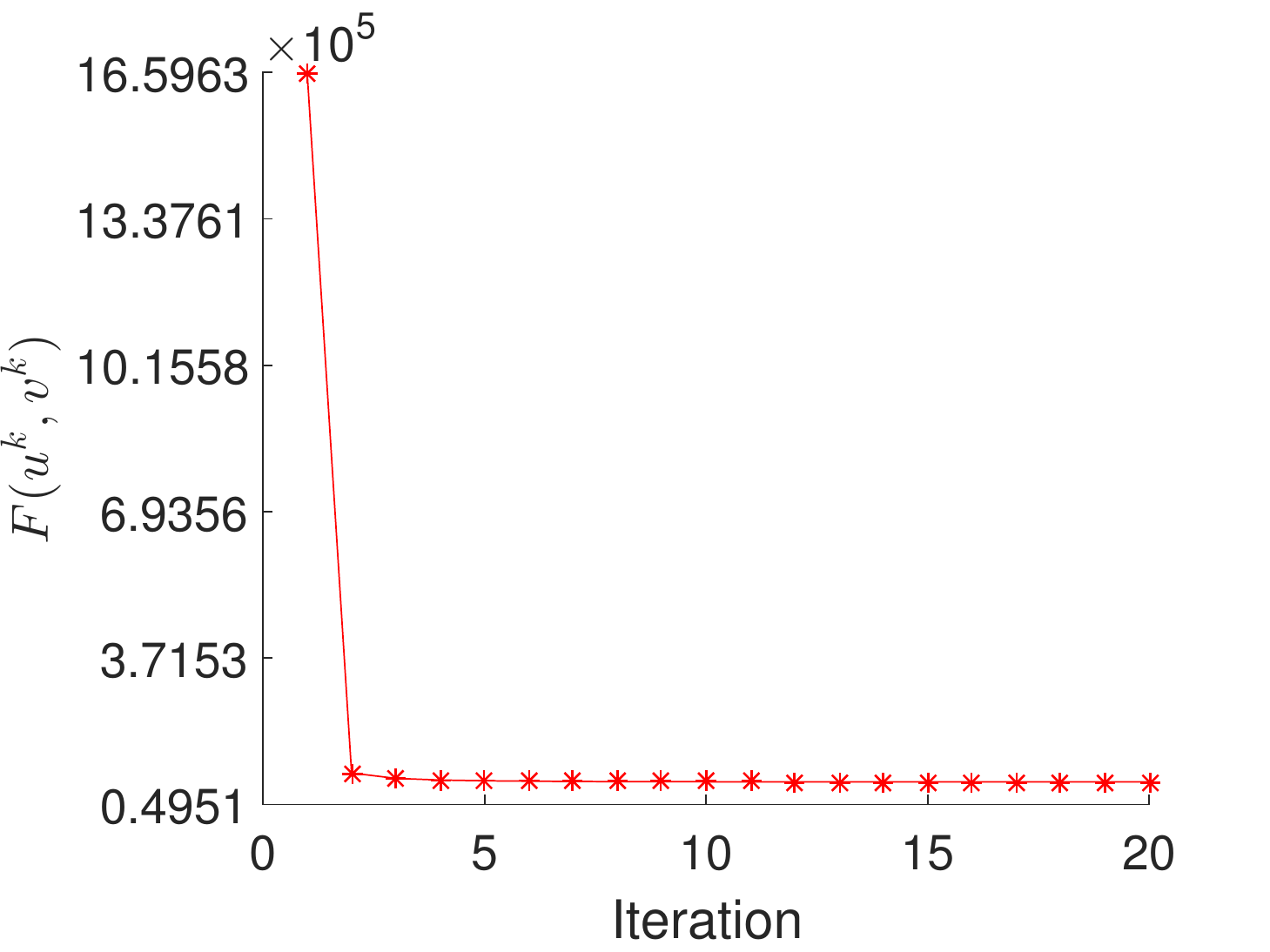} \ &	
			\includegraphics[width=1.3in]{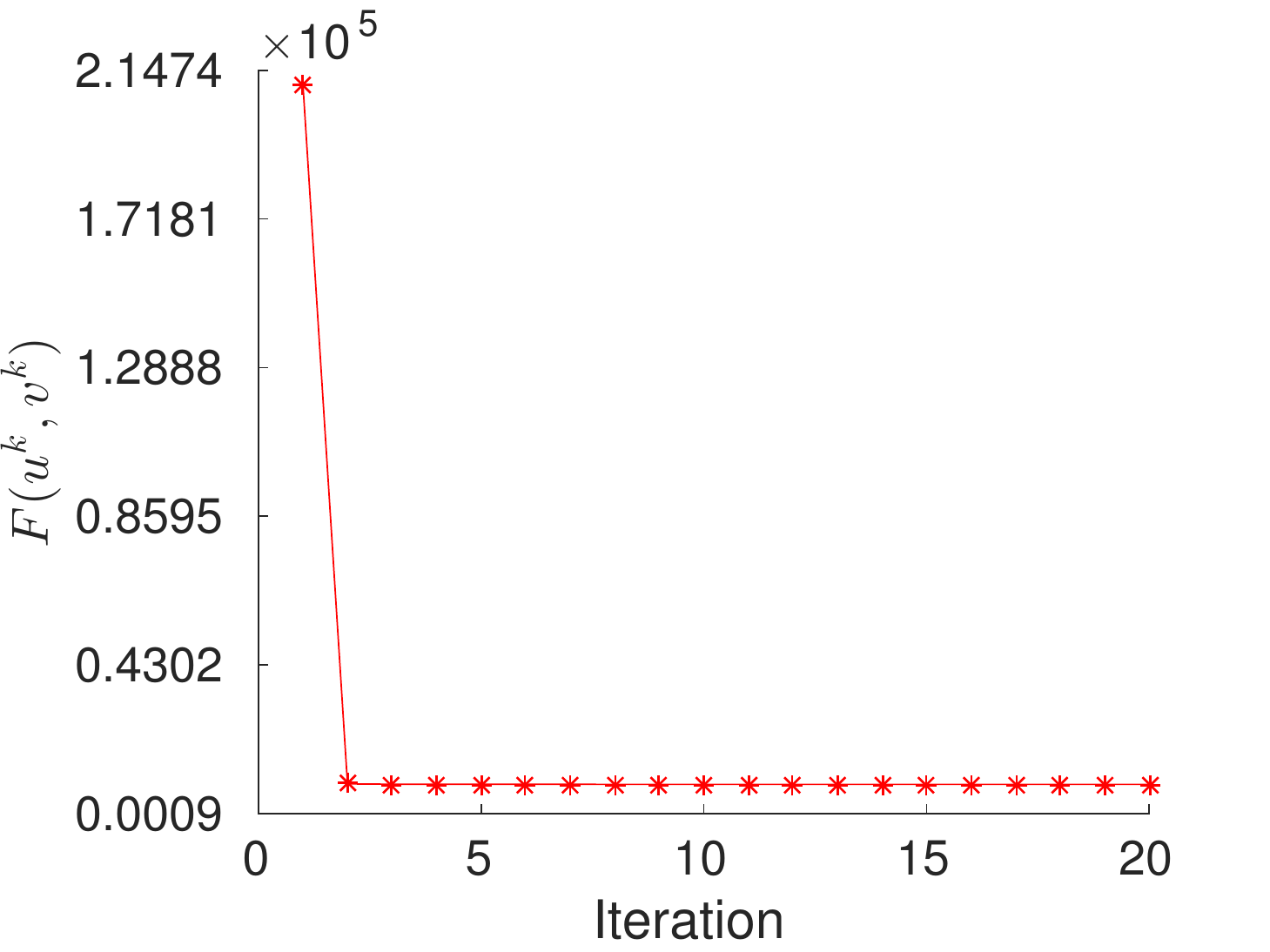} \ &
			\includegraphics[width=1.3in]{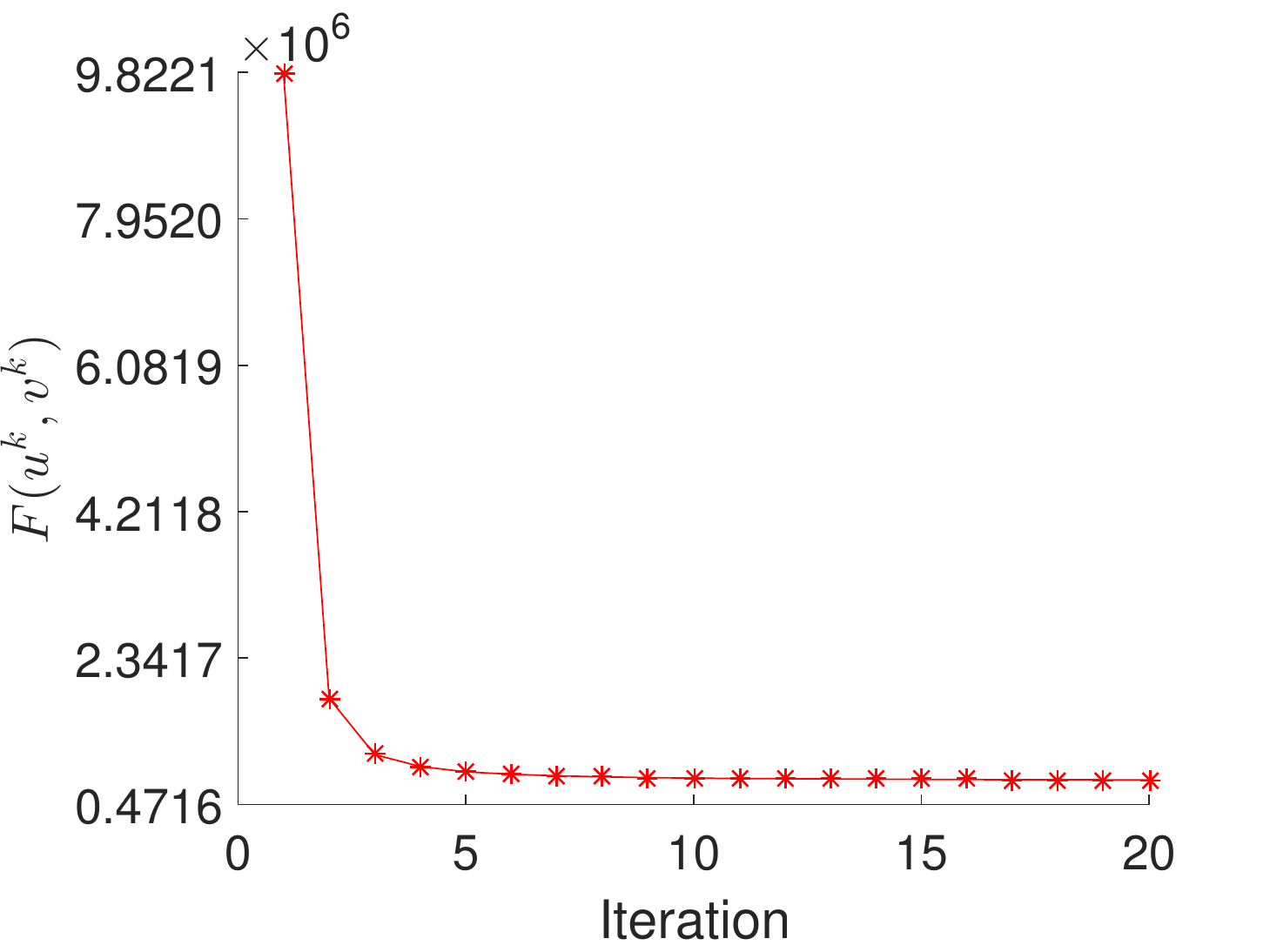} \ &
			\includegraphics[width=1.3in]{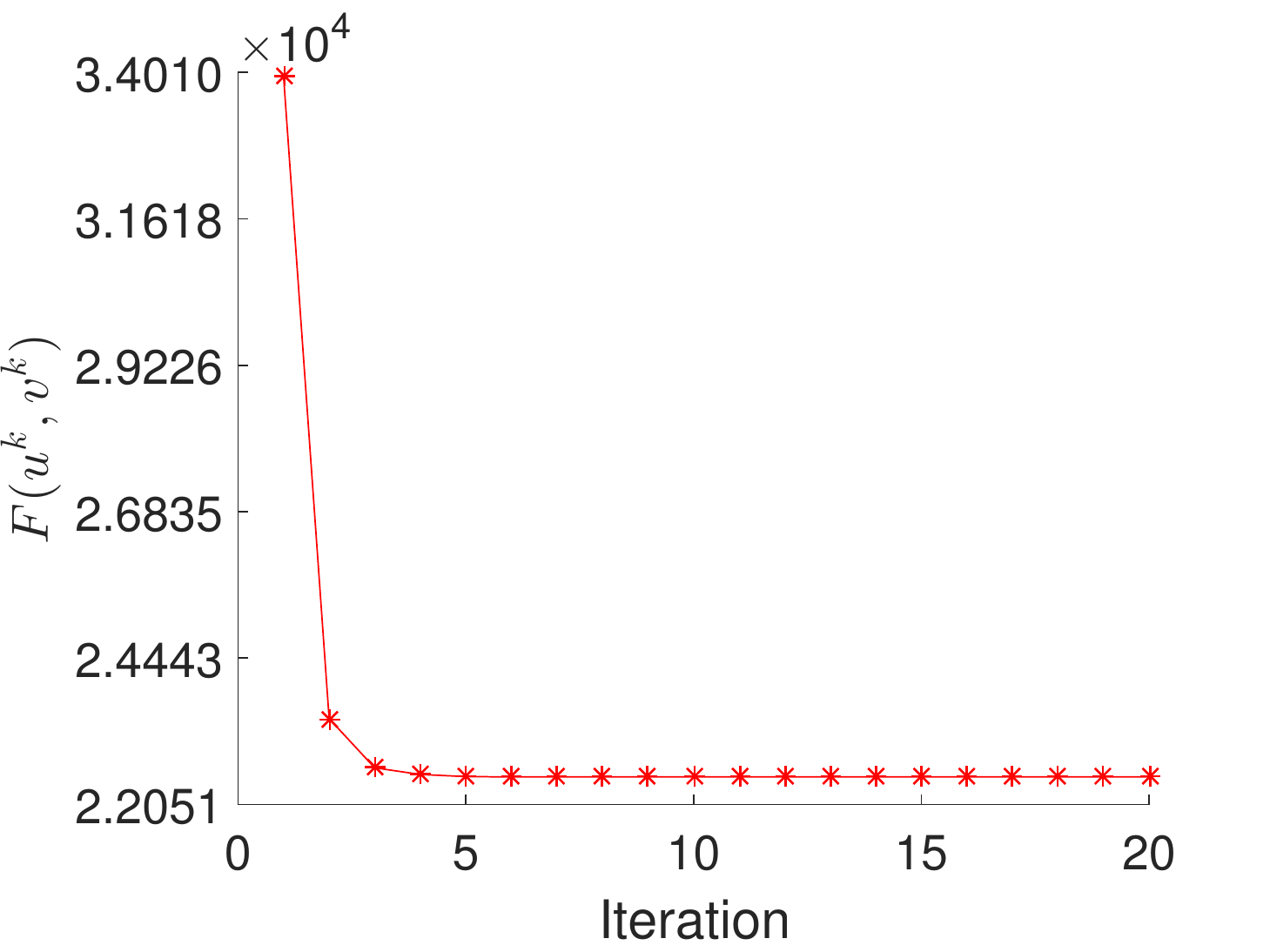} \\
			\includegraphics[width=1.3in]{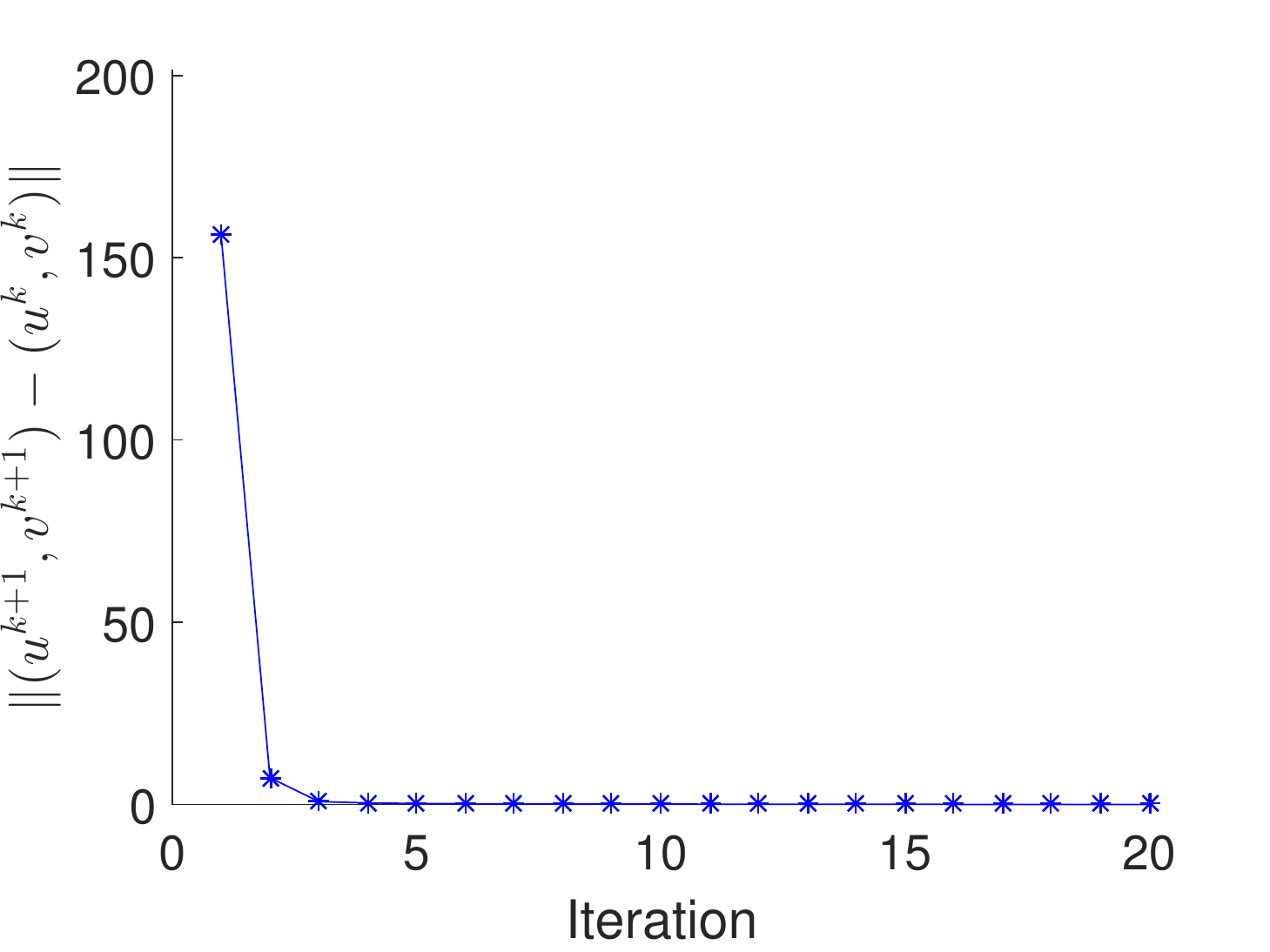} \ &
			\includegraphics[width=1.3in]{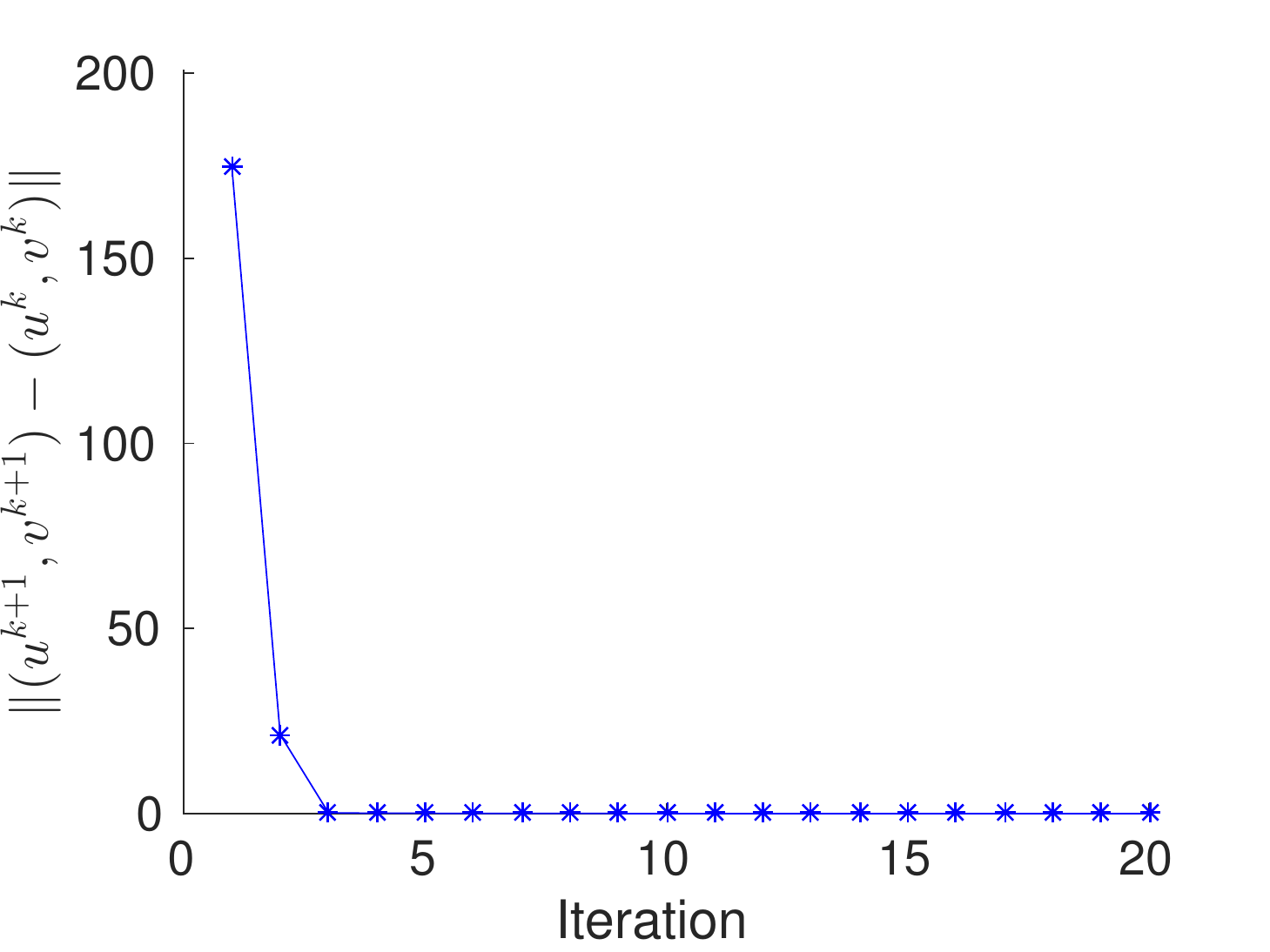} \ &
			\includegraphics[width=1.3in]{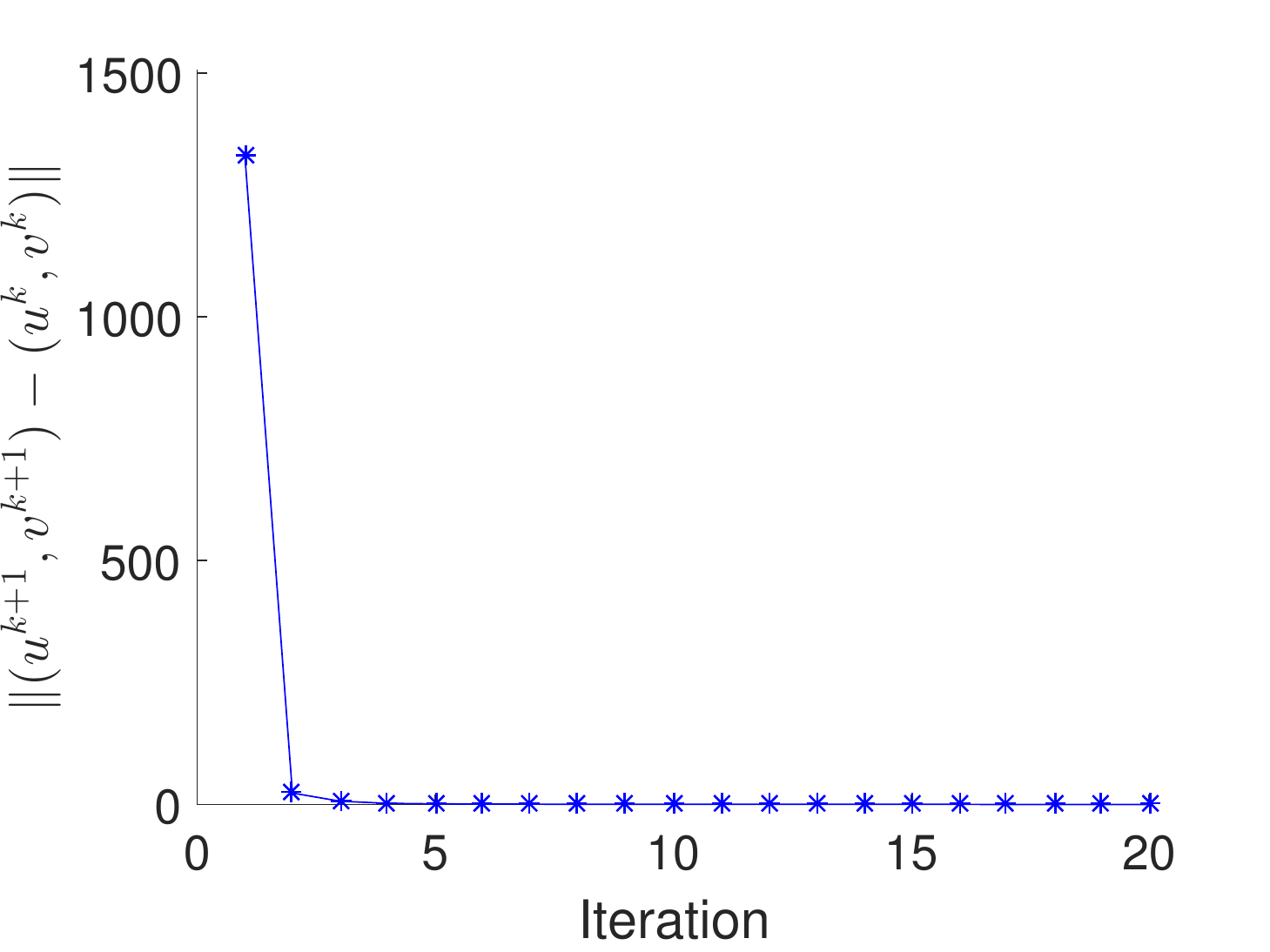} \ &
			\includegraphics[width=1.3in]{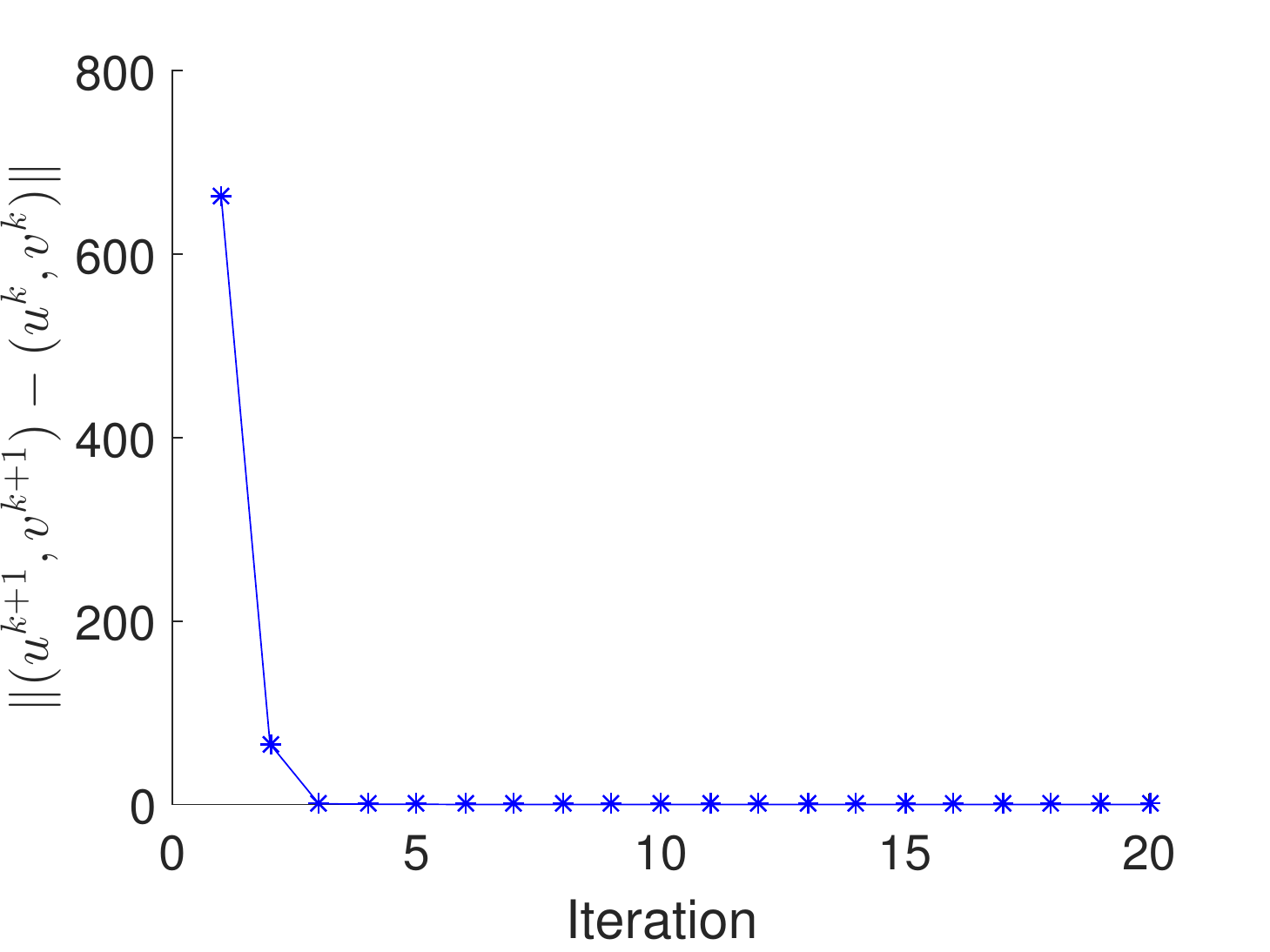} \\
			\includegraphics[width=1.3in]{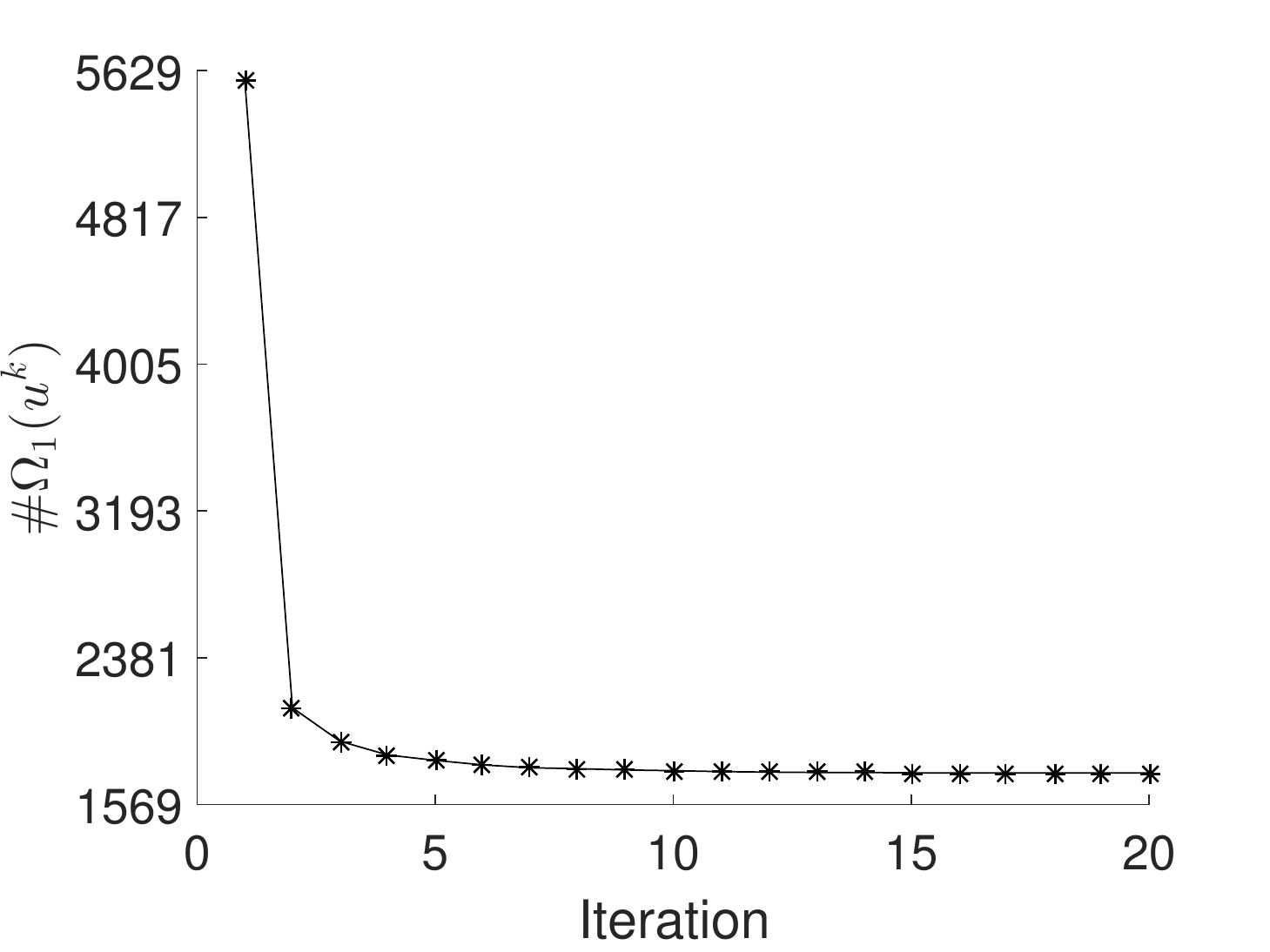} \ &
			\includegraphics[width=1.3in]{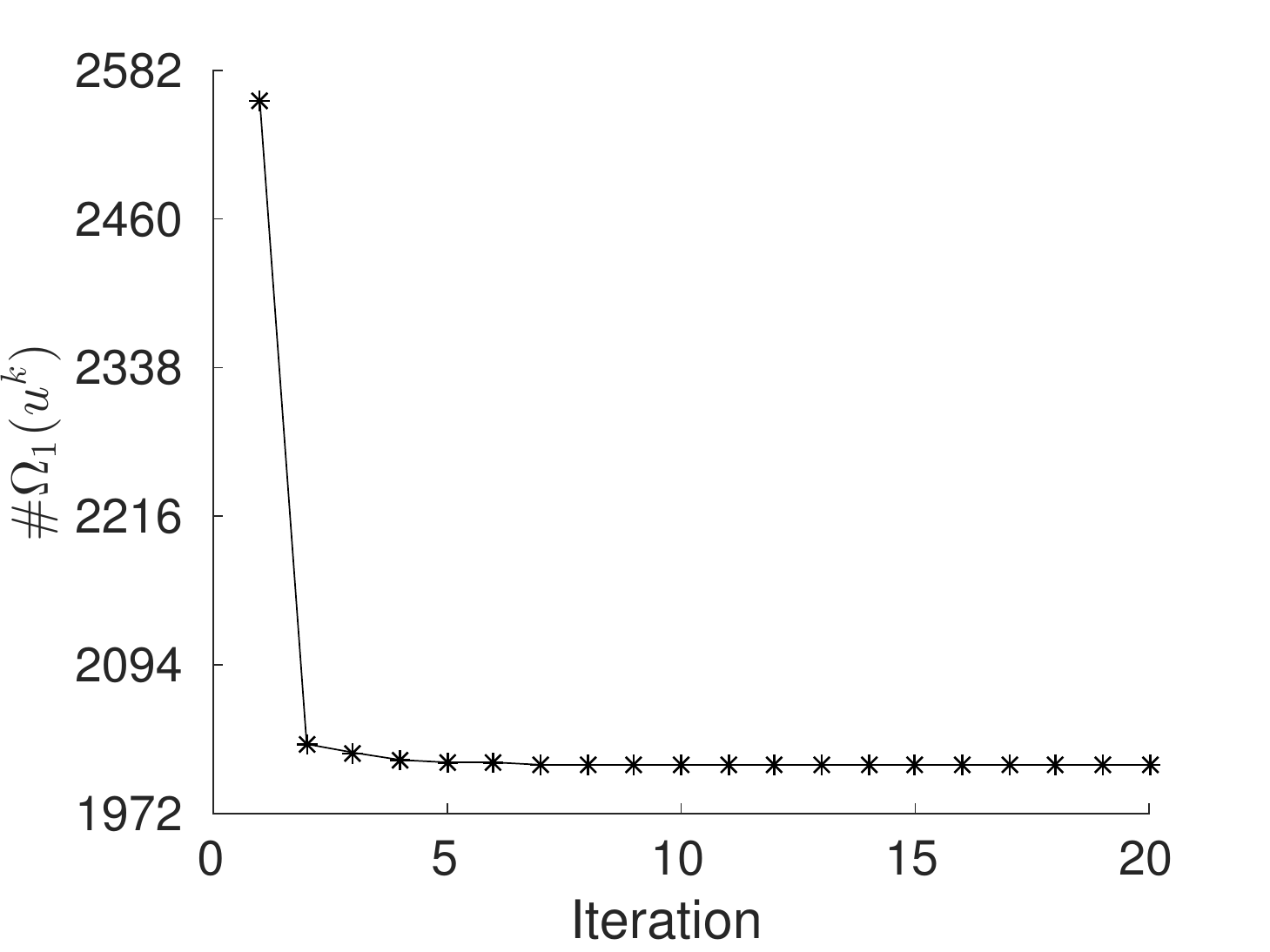} \ &
			\includegraphics[width=1.3in]{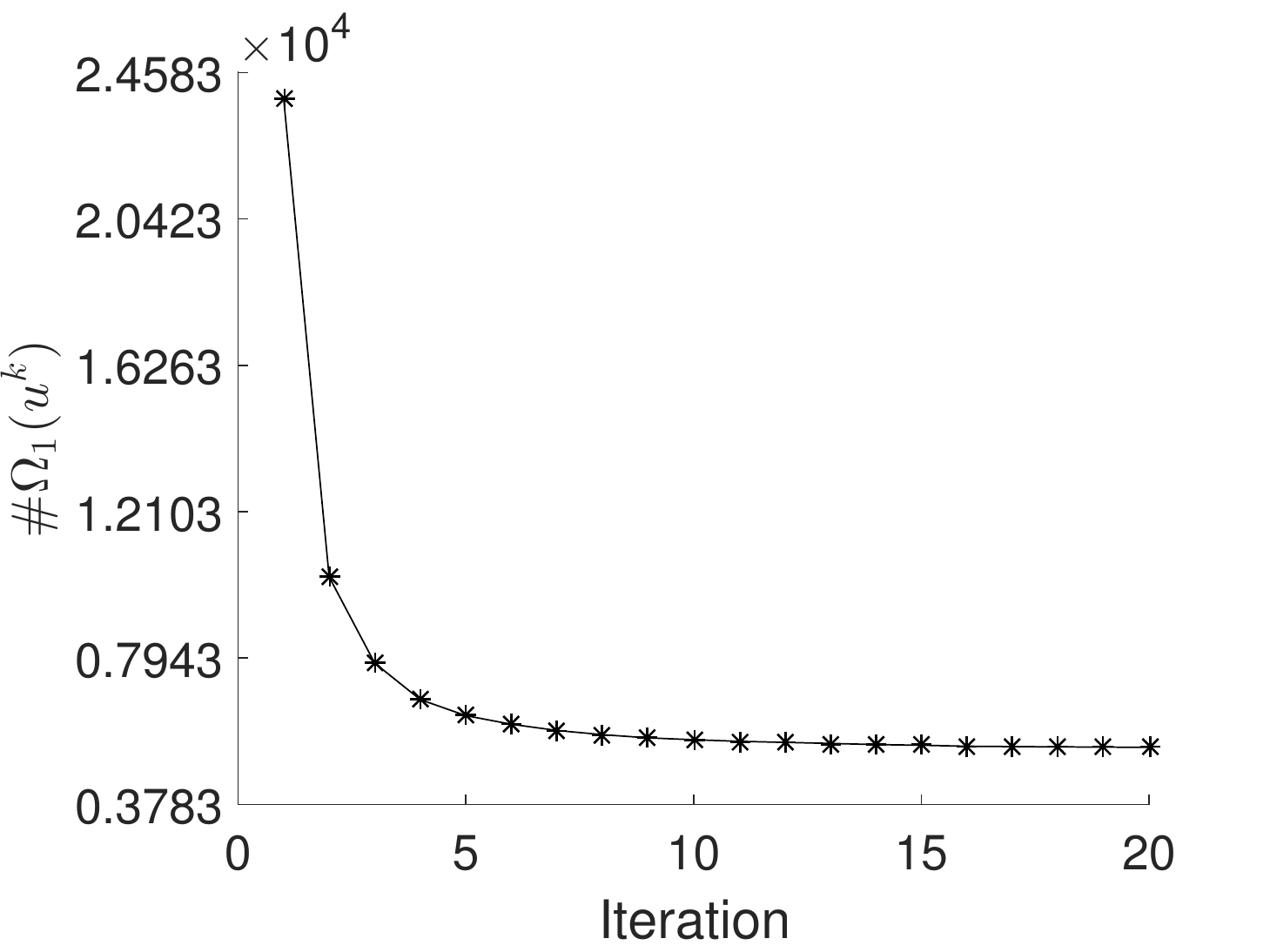} \ &
			\includegraphics[width=1.3in]{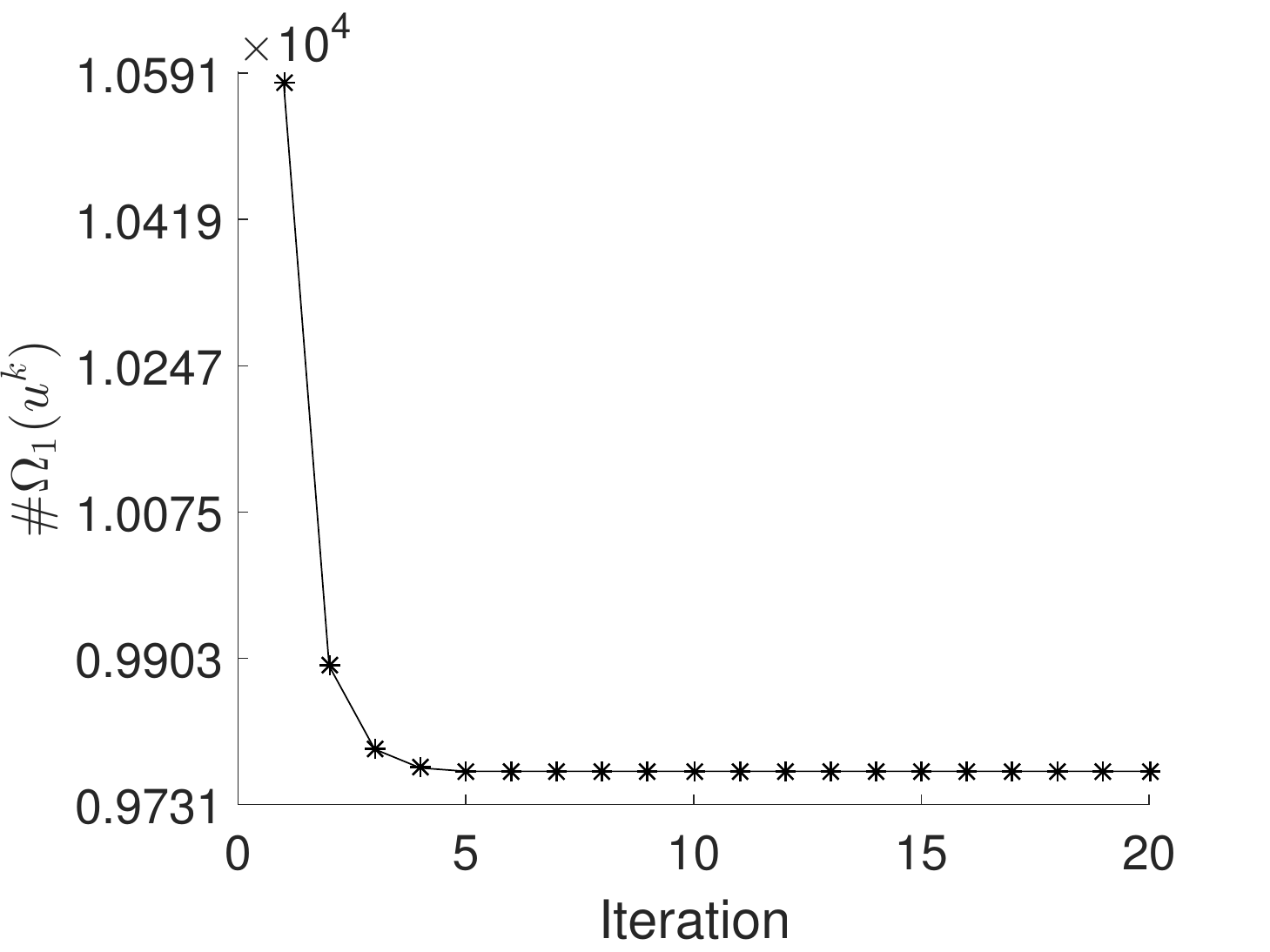} \\
		\end{tabular}
	}
	\caption{\small\sl  Convergence behavior of our algorithm. The first, second and third rows: $F(u^k,v^k)$, $\Vert (u^{k+1},v^{k+1})-(u^{k},v^{k}) \Vert$ and $\#\Omega_1(u^k)$ versus the outer iteration number. The first, second, third and fourth columns: the evolution curves on the second test images in Figure \ref{fig1_geometry1fn}, Figure \ref{fig1_geometry9fn}, Figure \ref{fig2_twophase} and Figure \ref{fig2_mri14}, respectively.}
	\label{fig0_convergence}
\end{figure}

\subsection{Comparisons on synthetic images}\label{experiment1}
In this subsection, we report our experiments on two synthetic images. The test images in Figure \ref{fig1_geometry1fn} and \ref{fig1_geometry9fn} are generated by clean piecewise constant images with additive Gaussian noise and smooth bias field. The results are evaluated visually and quantitatively.

We first look at a two-phase segmentation example on an image corrupted by different levels of inhomogeneity and noise. The test images, the inhomogeneity corrected images and the segmentation results are given in Figure \ref{fig1_geometry1fn}. Therein the first two rows are for a weakly inhomogeneous case, while the other two rows are for a strongly inhomogeneous one. The corresponding CV and JS values quantitatively evaluating the results are given in Table \ref{table_geometry1fn}. We can see that all methods get good segmentation results for the weakly inhomogeneous image. For the strongly inhomogeneous one with more measurement noise, LIC, CCZ and CNCS fail to segment it correctly. The segmentation result of L0MS is correct almost everywhere, but with some isolated speckles, as can be seen by a careful observation. Our method, however, can still segment it very well. These differences can be understood from the corrected images. The corrected image by LIC contains some inhomogeneity, indicating in some sense a possible imperfect segmentation. The corrected images by CCZ and CNCS are also still inhomogeneous, because CCZ and CNCS are restoration models which give a piecewise smooth rather than piecewise constant approximation of the image. The isolated speckles in the segmentation result by L0MS are because of the sparse residual noise in the corresponding corrected image in the third row; see, e.g., the white spots, by zooming in it. This phenomenon can also be observed in the next examples shown in Figure \ref{fig1_geometry9fn} and Figure \ref{fig2_twophase}. The residual speckle noise by L0MS is due to the flatness of the $L_0$ function over $(0,+\infty)$, which tends to generate sparse strong singularities. In contrast, our method gives the best inhomogeneity-corrected image and segmentation result. The quantitative comparisons in Table \ref{table_geometry1fn}, especially the CV values, demonstrate more clearly the overall better performances of our approach. This indeed indicates the robustness of our method to image noise and intensity inhomogeneity.

Now we investigate a five-phase segmentation example on an image with different levels of inhomogeneity and noise. Note that, the code of the LIC method provided by the authors of \cite{li2011level} does not apply to five-phase segmentation, thus we do not include it for the comparison in this example.
In Figure \ref{fig1_geometry9fn}, we give the test images, the inhomogeneity corrected images and the segmentation results. The corresponding CV and JS values are given in Table \ref{table_geometry9fn}. The experimental results and phenomenon are similar to that of the two-phase case in Figure \ref{fig1_geometry1fn} and Table \ref{table_geometry1fn}.

\begin{table}[H]
	\scriptsize
	\renewcommand{\arraystretch}{1.1}
	\caption{\small\sl Quantitative evaluation of the results in Figure \ref{fig1_geometry1fn} in terms of CV and JS values.}
	\centering
	\begin{tabular}{|c|c|c|c|c|c|c|c|c|}
	\hline
	& \multicolumn{4}{|c|}{results of the first test image in Figure \ref{fig1_geometry1fn}} & \multicolumn{4}{|c|}{results of the second test image in Figure \ref{fig1_geometry1fn}} \\
	\hline
	phase & \multicolumn{2}{|c|}{1} & \multicolumn{2}{|c|}{2} & \multicolumn{2}{|c|}{1} & \multicolumn{2}{|c|}{2} \\
	\hline
	& CV & JS & CV & JS & CV & JS & CV & JS \\
	\hline
	LIC & 0.2913 & 1.0000 & 0.0409 & 1.0000 & 0.4091 & 0.8353 & 0.2539 & 0.9122 \\	
	\hline
	CCZ & 0.4745 & 1.0000 & 0.1499 & 1.0000 & 0.4982 & 0.4991 & 0.4061 & 0.6091 \\	
	\hline
	CNCS & 0.3805 & 1.0000 & 0.1420 & 1.0000 & 0.3317 & 0.4880 & 0.3262 & 0.5913 \\	
	\hline
	L0MS & 0.0598 & 1.0000 & 0.0073 & 1.0000 & 0.2250 & 0.9927 & 0.0469 & 0.9967 \\	
	\hline
	Ours & \textbf{0.0185} & 1.0000 & \textbf{0.0011} & 1.0000 & \textbf{0.0293} & \textbf{0.9971} & \textbf{0.0120} & \textbf{0.9987} \\  	
	\hline	
	\end{tabular}
\label{table_geometry1fn}
\end{table}

\begin{table}[H]
	\scriptsize
	\renewcommand{\arraystretch}{1.1}
	\caption{\small\sl Quantitative evaluation of the results in Figure \ref{fig1_geometry9fn} in terms of CV and JS values.}
	\centering
	\begin{tabular}{|c|c|c|c|c|c|c|c|c|c|c|}
		\hline
		\multicolumn{11}{|c|}{results of the first test image in Figure \ref{fig1_geometry9fn}} \\
		\hline
		phase & \multicolumn{2}{|c|}{1} & \multicolumn{2}{|c|}{2} & \multicolumn{2}{|c|}{3} & \multicolumn{2}{|c|}{4} & \multicolumn{2}{|c|}{5} \\
		\hline
		& CV & JS & CV & JS & CV & JS & CV & JS & CV & JS \\
		\hline
		CCZ & 0.5456 & 1.0000 & 0.0241 & 0.9922 & 0.0591 & 0.9959 & 0.0172 & 1.0000 & 0.0179 & 1.0000 \\	
		\hline
		CNCS & 0.4768 & 1.0000 & 0.0234 & 0.9995 & 0.0549 & 0.9997 & 0.0169 & 1.0000 & 0.0162 & 1.0000 \\		
		\hline
        L0MS & 0.0064 & 1.0000 & \textbf{0.0001} & \textbf{1.0000} & 0.0010 & \textbf{1.0000} & 0.0037 & 1.0000 & 0.0006 & 1.0000 \\
		\hline
        Ours & \textbf{0.0012} & 1.0000 & \textbf{0.0001} & \textbf{1.0000} & \textbf{0.0004} & \textbf{1.0000} & \textbf{0.0018} & 1.0000 & \textbf{0.0005} & 1.0000 \\
        \hline
        \hline
        \multicolumn{11}{|c|}{results of the second test image in Figure \ref{fig1_geometry9fn}} \\
        \hline
        phase & \multicolumn{2}{|c|}{1} & \multicolumn{2}{|c|}{2} & \multicolumn{2}{|c|}{3} & \multicolumn{2}{|c|}{4} & \multicolumn{2}{|c|}{5} \\
        \hline
        & CV & JS & CV & JS & CV & JS & CV & JS & CV & JS \\
        \hline
        CCZ & 0.5283 & 0.3630 & 0.0370& 0.2919 & 0.3243 & 0.2423 & 0.0653 &	0.0000 & 0.1008 & 0.4488 \\	
        \hline
        CNCS & 0.4942 & 0.3632 & 0.0402 & 0.2892 & 0.3086 & 0.2349 & 0.0685 & 0.0000 & 0.0982 & 0.4509 \\		
        \hline
        L0MS & 0.0117 & 0.9999 & 0.0052 & 0.9973 & 0.0075 & 0.9985 & 0.0159 & 0.9910 & 0.0078 & 0.9978 \\
        \hline
        Ours & \textbf{0.0007} & \textbf{1.0000} & \textbf{0.0013} & \textbf{0.9997} & \textbf{0.0011} & \textbf{0.9997} & \textbf{0.0025} & \textbf{0.9991} & \textbf{0.0018} & \textbf{0.9998} \\
		\hline	
	\end{tabular}
\label{table_geometry9fn}
\end{table}

\begin{figure}[H]
	\centerline{
		\begin{tabular}{c@{}c@{}c@{}c@{}c@{}c}
			\includegraphics[width=0.8in]{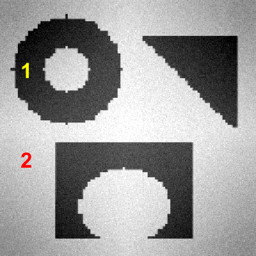} \ &
			\includegraphics[width=0.8in]{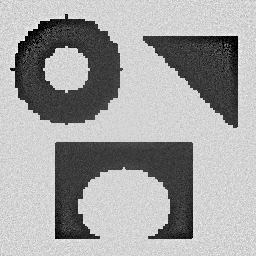} \ &
			\includegraphics[width=0.8in]{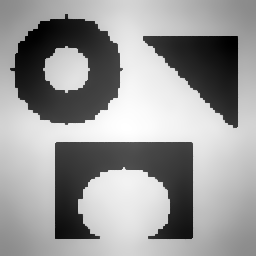} \ &
			\includegraphics[width=0.8in]{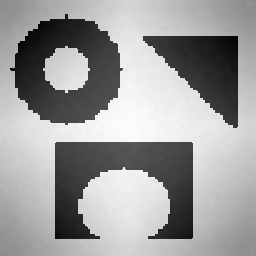} \ &
			\includegraphics[width=0.8in]{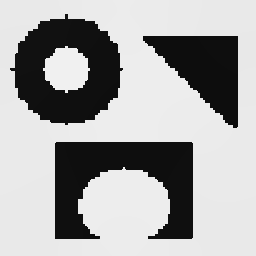} \ &
			\includegraphics[width=0.8in]{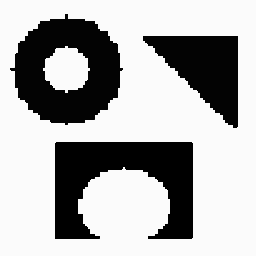}  \\
			&
			\includegraphics[width=0.8in]{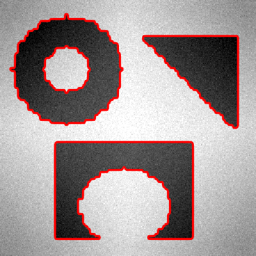} \ &
			\includegraphics[width=0.8in]{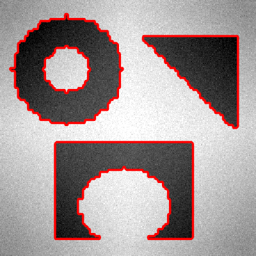} \ &
			\includegraphics[width=0.8in]{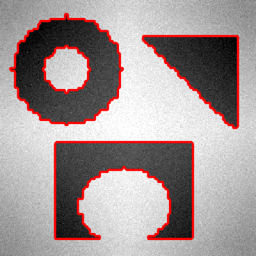} \ &
			\includegraphics[width=0.8in]{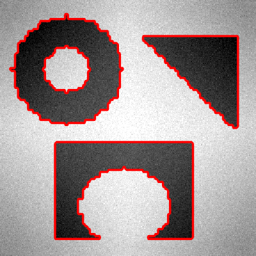} \ &
			\includegraphics[width=0.8in]{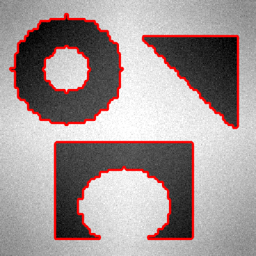}  \\
			\includegraphics[width=0.8in]{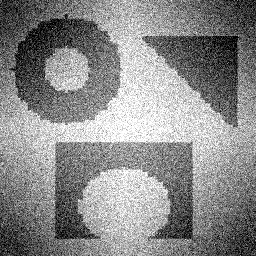} \ &
			\includegraphics[width=0.8in]{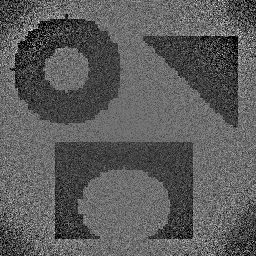} \ &
			\includegraphics[width=0.8in]{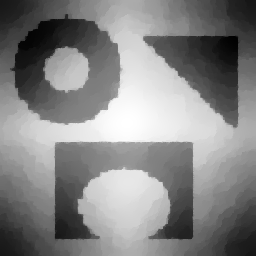} \ &
			\includegraphics[width=0.8in]{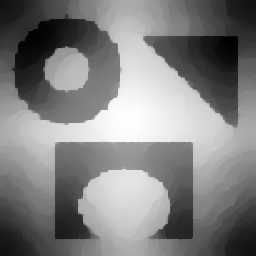} \ &
			\includegraphics[width=0.8in]{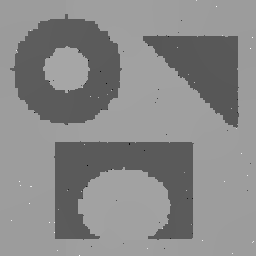} \ &
			\includegraphics[width=0.8in]{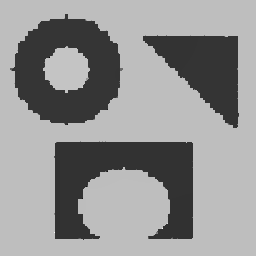}  \\
			&
			\includegraphics[width=0.8in]{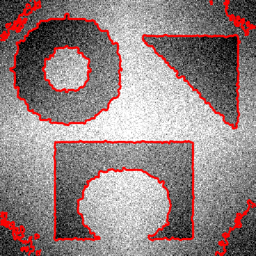} \ &
			\includegraphics[width=0.8in]{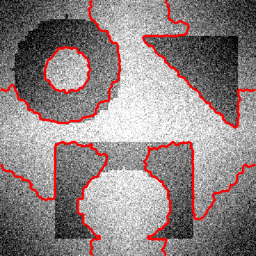} \ &
			\includegraphics[width=0.8in]{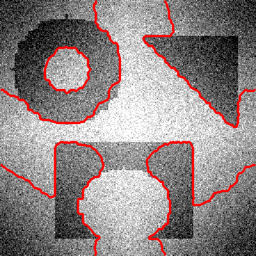} \ &
			\includegraphics[width=0.8in]{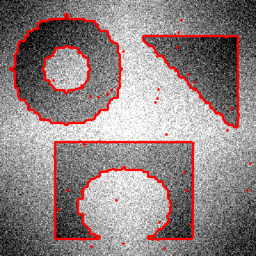} \ &
			\includegraphics[width=0.8in]{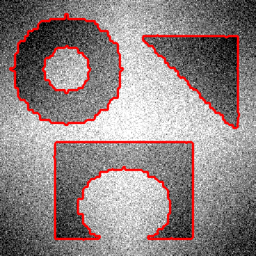}  \\
			(a) Input & (b) LIC & (c) CCZ & (d) CNCS & (e) L0MS & (f) Ours
		\end{tabular}
	}
	\caption{\small\sl Performance comparisons between different methods applied to two-phase segmentation. Row 1: a test noisy image with weak inhomogeneity and its inhomogeneity-corrected versions by different methods; Row 2: segmentation results corresponding to Row 1. Row 3: another test noisy image with strong inhomogeneity and its inhomogeneity-corrected versions; Row 4: segmentation results corresponding to Row 3. The fine-tuned parameters for these two tests are: for LIC, $(\sigma,\Delta t)=(4,0.1),(7,0.2)$; for CCZ, $(\lambda,\mu)=(10,0.1),(6,1)$; for CNCS, $(\lambda,T)=(10,0.01),(8,0.001)$; for L0MS, $(\alpha,k)=(0.02,0.1),(0.02,1)$; and for ours, $(\alpha,\beta)=(0.1,100),(0.1,1000)$. The corresponding CV and JS values are given in Table \ref{table_geometry1fn}.}
	\label{fig1_geometry1fn}
\end{figure}

\begin{figure}[H]
	\centerline{
		\begin{tabular}{c@{}c@{}c@{}c@{}c}
			\includegraphics[width=0.77in]{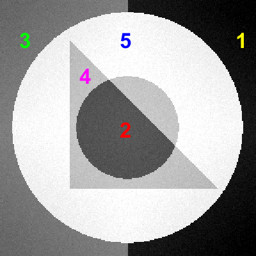} \ &
			\includegraphics[width=0.77in]{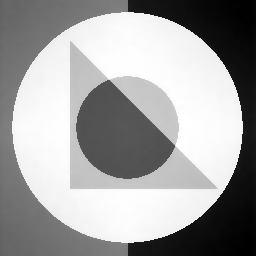} \ &
			\includegraphics[width=0.77in]{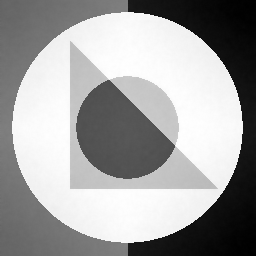} \ &
			\includegraphics[width=0.77in]{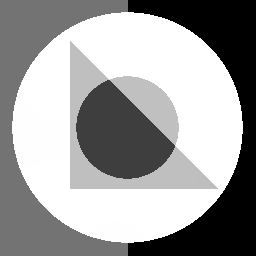} \ &
			\includegraphics[width=0.77in]{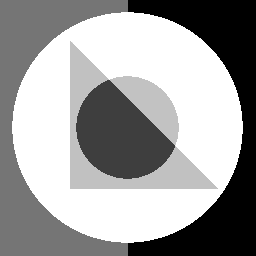}  \\
			&
			\includegraphics[width=0.77in]{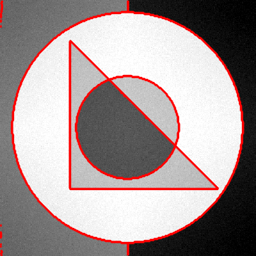} \ &
			\includegraphics[width=0.77in]{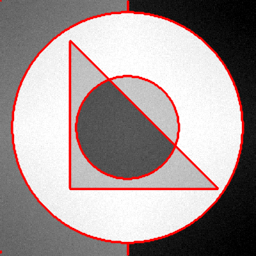} \ &
			\includegraphics[width=0.77in]{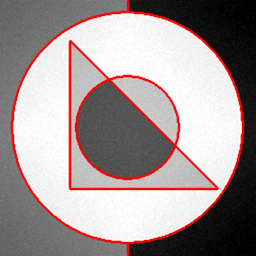} \ &
			\includegraphics[width=0.77in]{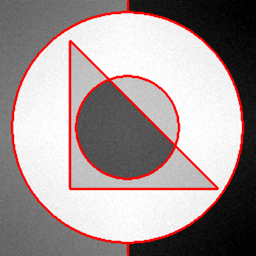}  \\
			\includegraphics[width=0.77in]{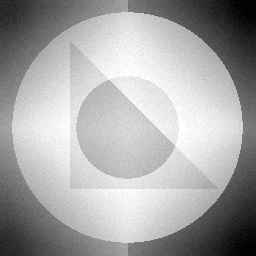} \ &
			\includegraphics[width=0.77in]{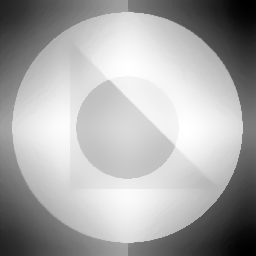} \ &
			\includegraphics[width=0.77in]{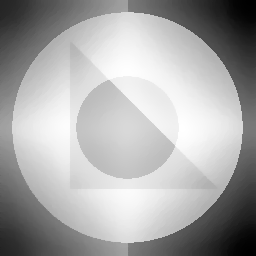} \ &
			\includegraphics[width=0.77in]{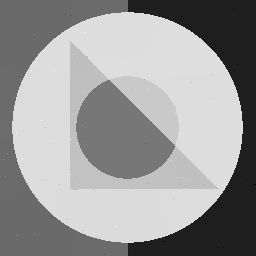} \ &
			\includegraphics[width=0.77in]{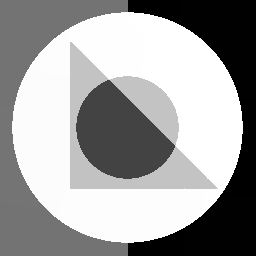}  \\
			&
			\includegraphics[width=0.77in]{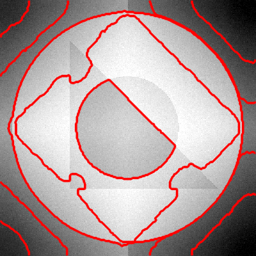} \ &
			\includegraphics[width=0.77in]{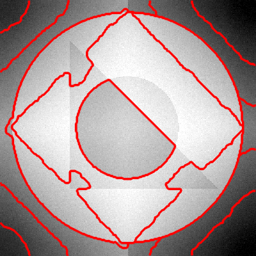} \ &
			\includegraphics[width=0.77in]{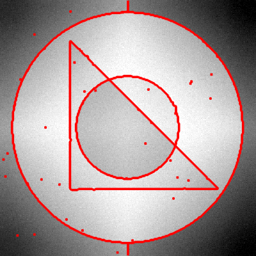} \ &
			\includegraphics[width=0.77in]{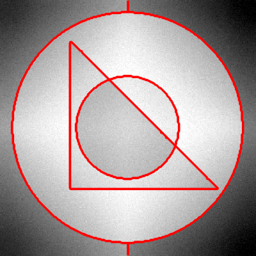}  \\
			(a) Input & (b) CCZ & (c) CNCS & (d) L0MS & (e) Ours
		\end{tabular}
	}
	\caption{\small\sl Performance comparisons between different methods applied to five-phase segmentation. Row 1: a test noisy image with weak inhomogeneity and its inhomogeneity-corrected versions by different methods; Row 2: segmentation results corresponding to Row 1. Row 3: another test noisy image with strong inhomogeneity and its inhomogeneity-corrected versions; Row 4: segmentation results corresponding to Row 3. The fine-tuned parameters for these two tests are: for CCZ, $(\lambda,\mu)=(30,0.01),(10,0.01)$; for CNCS, $(\lambda,T)=(10,0.01),(10,0.001)$; for L0MS, $(\alpha,k)=(0.01,100),(0.007,300)$; and for ours, $(\alpha,\beta)=(0.01,10),(0.01,1000)$. The corresponding CV and JS values are given in Table \ref{table_geometry9fn}.}
	\label{fig1_geometry9fn}
\end{figure}

\subsection{Comparisons on simulated  and real medical images}\label{experiment2}
We give our experiments on four simulated and real medical images in this subsection. The results are compared visually.

We begin with two two-phase segmentation examples on a medical image and a retina vessel image. The test images, the inhomogeneity corrected images and the segmentation results are given in Figure \ref{fig2_twophase}. As shown, the corrected images by CCZ and CNCS are still with inhomogeneity, thus their segmentation results in the second stage are not satisfactory. The segmentation results of LIC, L0MS, and our method are comparable for the first one. A careful observation shows that our segmentation for the second one is a little better, which is with cleaner and smoother boundary curves. Besides, the L0MS segmentation results are with several speckles (see the second and the top boundary of the first corrected image), which is due to the reason stated in the previous subsection. Our method meanwhile provides, although not perfect, but relatively the best inhomogeneity-corrected images, which are almost piecewise constant and with no noise or stair case effect.

Now we present two four-phase segmentation examples on two MRI images, one with noise and inhomogeneity and the other with only inhomogeneity. We mention that, LIC is also tested here, as in \cite{li2011level}, by removing the background whose pixel values are near to zero. The two test images, the inhomogeneity corrected images and the segmentation results are given in Figure \ref{fig2_mri14}. As shown, all methods get relatively satisfactory segmentation results for the first image, which is with little inhomogeneity or noise. For the second one, CCZ and CNCS lose their effectiveness, while LIC, L0MS and our method can segment it well. Little differences can be observed yet; see the blue rectangles in the last row. As for the inhomogeneity correction, both L0MS and our method perform better than others. The first corrected image of LIC contains some weak noise. The corrected images of CCZ and CNCS for the second one are still inhomogeneous.

\begin{figure}[H]
	\centerline{
		\begin{tabular}{c@{}c@{}c@{}c@{}c@{}c}
			\includegraphics[width=1in]{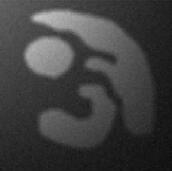} \ &
			\includegraphics[width=1in]{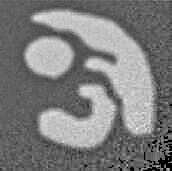} \ &
			\includegraphics[width=1in]{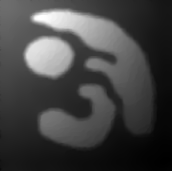} \ &
			\includegraphics[width=1in]{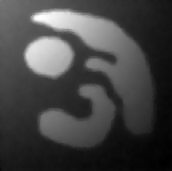} \ &
			\includegraphics[width=1in]{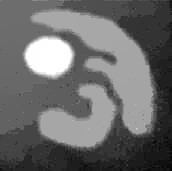} \ &
			\includegraphics[width=1in]{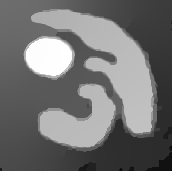}  \\
			&
			\includegraphics[width=1in]{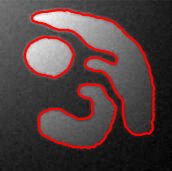} \ &
			\includegraphics[width=1in]{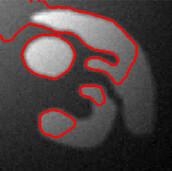} \ &
			\includegraphics[width=1in]{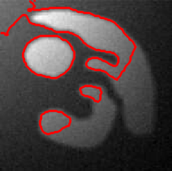} \ &
			\includegraphics[width=1in]{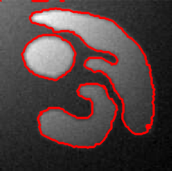} \ &
			\includegraphics[width=1in]{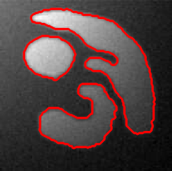}  \\
			\includegraphics[width=1in]{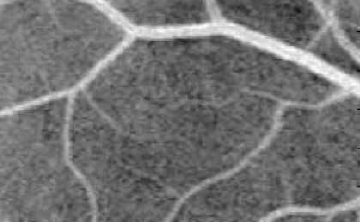} \ &
			\includegraphics[width=1in]{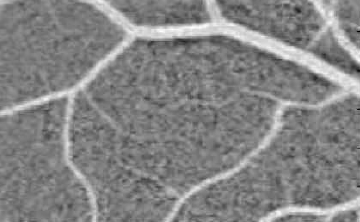} \ &
			\includegraphics[width=1in]{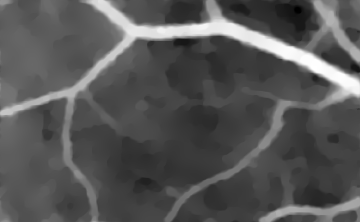} \ &
			\includegraphics[width=1in]{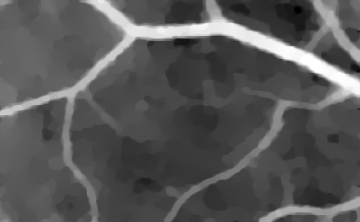} \ &
			\includegraphics[width=1in]{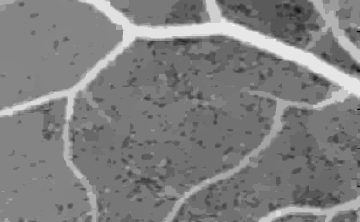} \ &
			\includegraphics[width=1in]{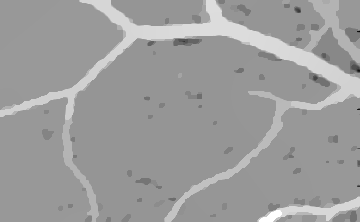}  \\
			&
			\includegraphics[width=1in]{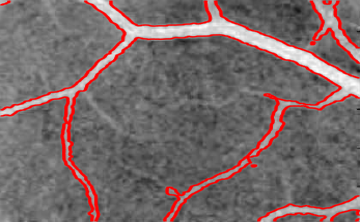} \ &
			\includegraphics[width=1in]{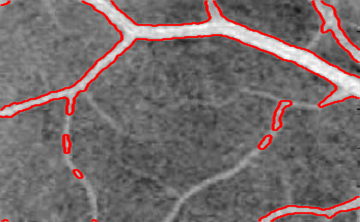} \ &
			\includegraphics[width=1in]{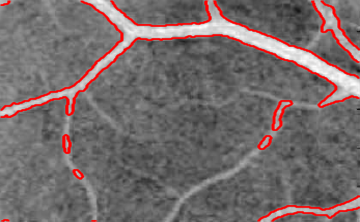} \ &
			\includegraphics[width=1in]{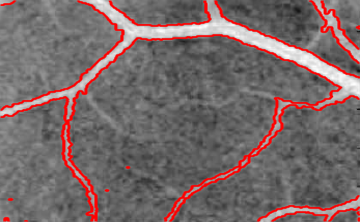} \ &
			\includegraphics[width=1in]{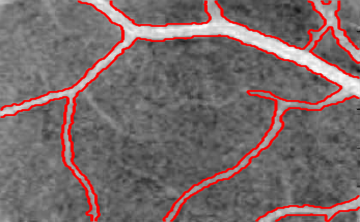}  \\
			(a) Input & (b) LIC & (c) CCZ & (d) CNCS & (e) L0MS & (f) Ours
		\end{tabular}
	}
	\caption{\small\sl Performance comparisons between different methods applied to two-phase segmentation for a medical image and a retina vessel image. Row 1: a medical image with noise and inhomogeneity and its inhomogeneity-corrected versions by different methods; Row 2: segmentation results corresponding to Row 1. Row 3: a retina vessel image with noise and inhomogeneity and its inhomogeneity-corrected versions; Row 4: segmentation results corresponding to Row 3. The fine-tuned parameters for these two tests are: for LIC, $(\sigma,\Delta t)=(6,0.2),(9,0.2)$; for CCZ, $(\lambda,\mu)=(18,1),(8,1)$; for CNCS, $(\lambda,T)=(10,0.006),(9,0.001)$; for L0MS, $(\alpha,k)=(1,310),(0.1,300)$; and for ours, $(\alpha,\beta)=(0.1,1000),(0.1,100)$.}
	\label{fig2_twophase}
\end{figure}

\begin{figure}[H]
	\centerline{
		\begin{tabular}{c@{}c@{}c@{}c@{}c@{}c}
			\includegraphics[width=1in]{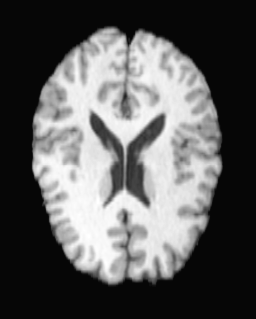} \ &
			\includegraphics[width=1in]{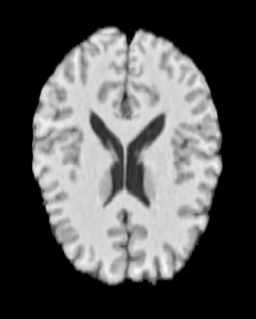} \ &
			\includegraphics[width=1in]{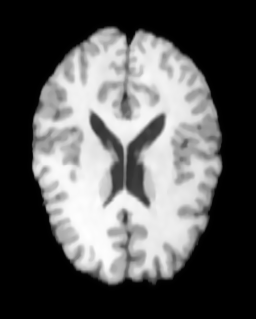} \ &
			\includegraphics[width=1in]{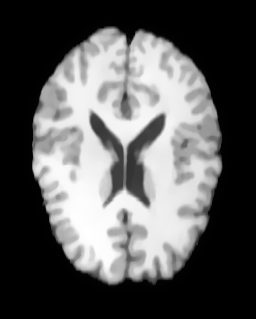} \ &
			\includegraphics[width=1in]{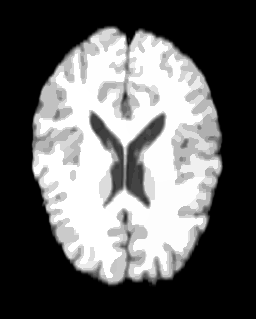} \ &
			\includegraphics[width=1in]{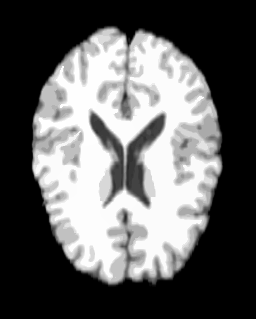} \\
			&
			\includegraphics[width=1in]{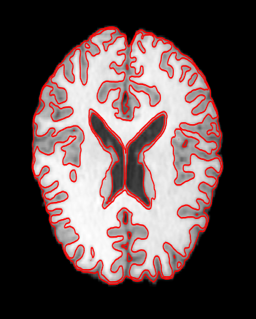} \ &
			\includegraphics[width=1in]{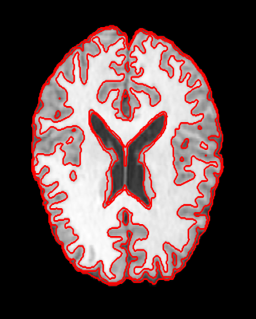} \ &		
			\includegraphics[width=1in]{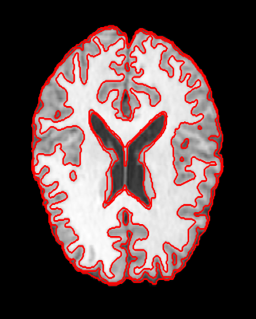} \ &
			\includegraphics[width=1in]{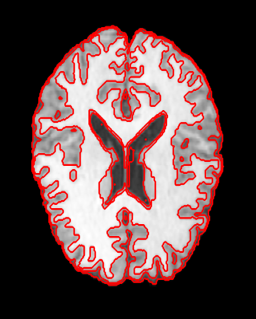} \ &
			\includegraphics[width=1in]{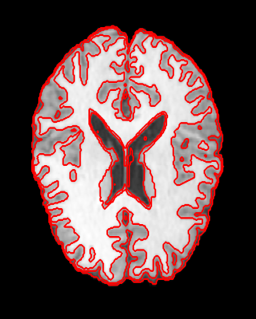} \\
			\includegraphics[width=1in]{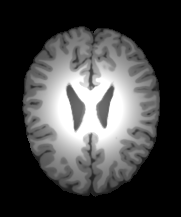} \ &
			\includegraphics[width=1in]{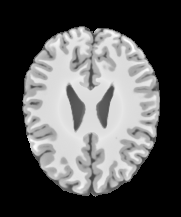} \ &
			\includegraphics[width=1in]{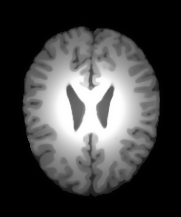} \ &
			\includegraphics[width=1in]{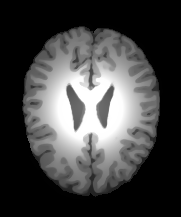} \ &
			\includegraphics[width=1in]{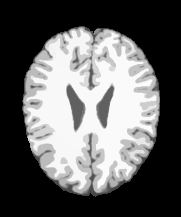} \ &
			\includegraphics[width=1in]{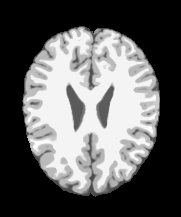} \\
			&
			\includegraphics[width=1in]{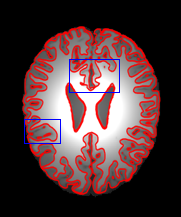} \ &
			\includegraphics[width=1in]{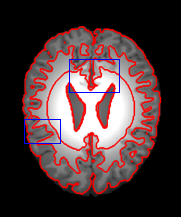} \ &		
			\includegraphics[width=1in]{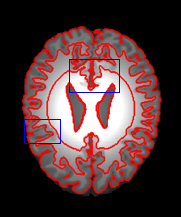} \ &
			\includegraphics[width=1in]{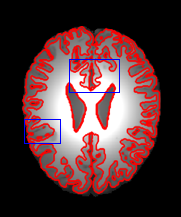} \ &
			\includegraphics[width=1in]{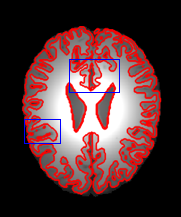} \\
			(a) Input & (b) LIC & (c) CCZ & (d) CNCS & (e) L0MS & (f) Ours
		\end{tabular}
	}	
	\caption{\small\sl Performance comparisons between different methods applied to four-phase segmentation for two MRI images. Row 1: an MRI image with noise and inhomogeneity and its inhomogeneity-corrected versions by different methods; Row 2: segmentation results corresponding to Row 1. Row 3: another MRI image with inhomogeneity and its inhomogeneity-corrected versions; Row 4: segmentation results corresponding to Row 3. The fine-tuned parameters for these two tests are: for LIC, $(\sigma,\Delta t)=(6,0.01),(5,0.01)$; for CCZ, $(\lambda,\mu)=(40,1),(70,1)$; for CNCS, $(\lambda,T)=(10,0.001),(20,0.001)$; for L0MS, $(\alpha,k)=(0.003,100),(2,300)$; and for ours, $(\alpha,\beta)=(0.007,2000),(0.001,100)$.}
	\label{fig2_mri14}
\end{figure}

\subsection{More comparisons on a real brain MRI dataset}\label{experiment3}
In this subsection, we do a brain segmentation test on a real brain MRI dataset shown in Figure \ref{fig3_testimages}. This dataset was also used in \cite{chang2017new} to test their 3D algorithm. It is a 3D dataset containing 12 slices of $256\times256$ images where significant intensity inhomegeneities can be observed. There is public available ground truth for the location of the brain region; see Figure \ref{fig3_groundtruth} for examples of the ground truth location in slice1, slice6, slice12, respectively. The results in this subsection are assessed visually and quantitatively.

\begin{figure}[H]
	\centerline{
		\begin{tabular}{c@{}c@{}c@{}c@{}c@{}c}
			\includegraphics[width=0.8in]{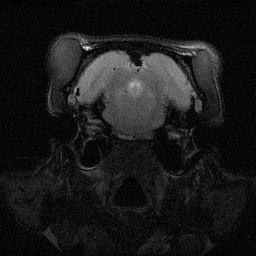} \ &
			\includegraphics[width=0.8in]{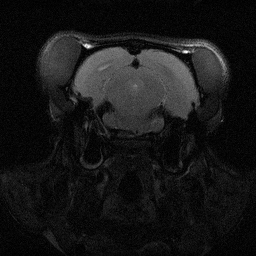}\ &	
			\includegraphics[width=0.8in]{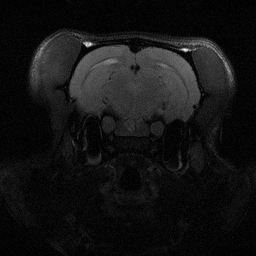} \ &
			\includegraphics[width=0.8in]{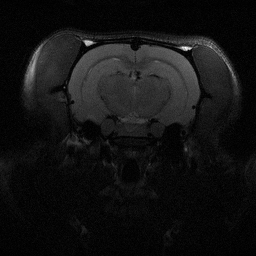} \ &
			\includegraphics[width=0.8in]{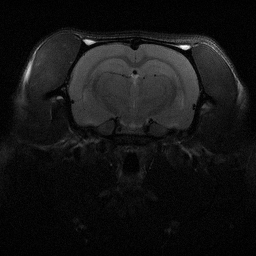} \ &
			\includegraphics[width=0.8in]{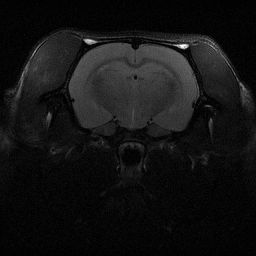} \\
			slice1 & slice2 & slice3 & slice4 & slice5 & slice6 \\
			\includegraphics[width=0.8in]{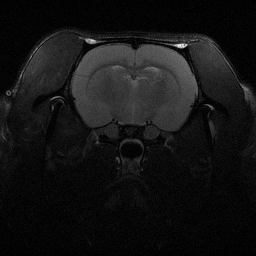} \ &
			\includegraphics[width=0.8in]{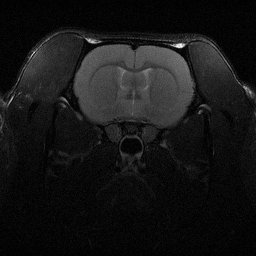} \ &		
			\includegraphics[width=0.8in]{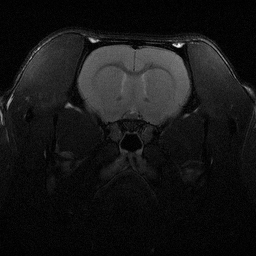} \ &
			\includegraphics[width=0.8in]{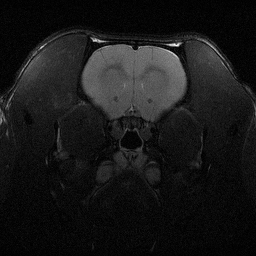} \ &
			\includegraphics[width=0.8in]{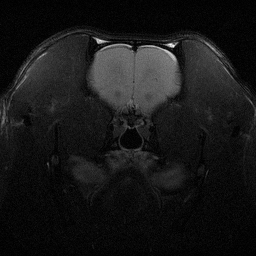} \ &
			\includegraphics[width=0.8in]{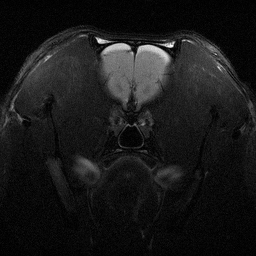} \\
			slice7 & slice8 & slice9 & slice10 & slice11 & slice12 \\
		\end{tabular}
	}
	\caption{\small\sl  A real brain MRI dataset used for a brain segmentation test: 12 slices of $256\times256$ images.}
	\label{fig3_testimages}
\end{figure}

\begin{figure}[H]
	\centerline{
		\begin{tabular}{c@{}c@{}c}		
			\includegraphics[width=0.8in]{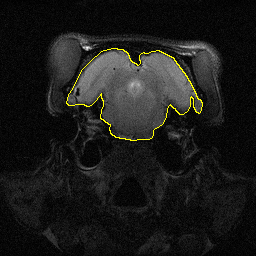} \ &
			\includegraphics[width=0.8in]{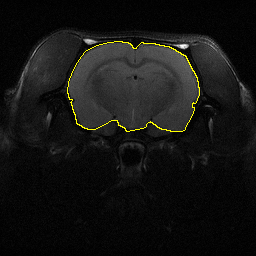} \ &
			\includegraphics[width=0.8in]{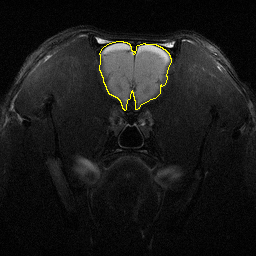} \\
			slice1 & slice6 & slice12 \\
		\end{tabular}
	}
	\caption{\small\sl Ground truth location of the brain region in slice1, slice6 and slice12 of the dataset in Figure \ref{fig3_testimages}.}
	\label{fig3_groundtruth}
\end{figure}

Here we elaborate on how this two-stage segmentation experiment was performed. In the first stage for bias correction, the 2D methods CCZ\cite{cai2013two}, CNCS\cite{Chan2018Convex} and ours process this dataset slice by slice, respectively; while the truly 3D method HoL0MS\cite{chang2017new} acts one-time on the whole dataset. In the second stage, all methods adopt the package in \cite{chang2017new}, i.e., a 3-phase clustering procedure classifying the image domain into the brain region, the surrounding non-brain region, and the background outside the body, followed by certain morphological operations, to get the final brain segmentation results.

Figure \ref{fig3_result1} gives the inhomogeneity corrected images in the first stage, and the CV values of the brain region in these corrected images are given in Table \ref{table_3D}. As shown in Figure \ref{fig3_result1}, both our method and HoL0MS can provide piecewise constant corrected images suitable for clustering in the next step, while those by CCZ and CNCS are piecewise smooth. A careful observation indicates that our corrected images recover a lot of information which are not visible in the original images; see, e.g., the corrected images of slice 1 and slice 12. From Table \ref{table_3D}, the CV values of results by our method are the lowest, quantitatively demonstrating that our method provides the best corrected images.

Figure \ref{fig3_result2} presents the brain segmentation results in the second stage, and the JS values for these brain segmentation results are given in Table \ref{table_3D}. Visually we can see that the results of HoL0MS and ours are better than those of CCZ and CNCS. As shown, the segmentation results of CCZ and CNCS are not satisfactory in most cases, which are due to their piecewise smooth corrected images in the first stage. A careful observation shows that over all our method locates the brain boundaries more accurately than HoL0MS (see, e.g., the results for slice 12). Moreover, the JS values in Table \ref{table_3D}  quantitatively demonstrate the performance advantage of our approach.

\begin{table}[H]
	\scriptsize
	\renewcommand{\arraystretch}{1.1}
	\caption{\small\sl CV values of the brain region in the bias corrected images in Figure \ref{fig3_result1} and JS values of the brain segmentation results in Figure \ref{fig3_result2}.}
	\centering
	\begin{tabular}{|c|c|c|c|c|c|c|c|c|c|c|c|c|}
		\hline
		slice & 1 & 2 & 3 & 4 & 5 & 6 & 7 & 8 & 9 & 10 & 11 & 12 \\
		\hline
		& CV & CV & CV & CV & CV & CV & CV & CV & CV & CV & CV & CV \\
		\hline
		CCZ & 0.3127 & 0.2958 & 0.3115 & 0.2839 & 0.2642 & 0.2268 & 0.2124 & 0.2069 & 0.1995 & 0.2025 & 0.2186 & 0.2958 \\
		\hline
		CNCS & 0.2923 & 0.2780 & 0.2942 & 0.2696	& 0.2499 & 0.2166 & 0.2021 & 0.1967	& 0.1918 & 0.1977 & 0.2166 & 0.2958 \\
		\hline
		HoL0MS & 0.2018	& 0.1839 & 0.1688 & 0.1530 & 0.1498 & 0.1302 & 0.1237 & 0.1317 & 0.1315 & 0.1337 & 0.1492 & 0.2129 \\
		\hline
		Ours & \textbf{0.1773} & \textbf{0.1733} & \textbf{0.1585} & \textbf{0.1364} & \textbf{0.1331} & \textbf{0.1183} & \textbf{0.1018} & \textbf{0.1125} & \textbf{0.1197} & \textbf{0.1249} & \textbf{0.1466} & \textbf{0.2040} \\ 		
		\hline
		& JS & JS & JS & JS & JS & JS & JS & JS & JS & JS & JS & JS \\
		\hline
		CCZ & 0.6385 & 0.6787 & 0.4925 & 0.3108 & 0.2430 & 0.2041 & 0.3029 & 0.6410 & 0.7591 & 0.8353 & 0.8885 & 0.8998 \\
		\hline
		CNCS & 0.6617 & 0.7033 & 0.5056 & 0.3469 & 0.3467 & 0.3431 & 0.5013 & 0.7567 & 0.8164 & 0.8761 & 0.9019 & 0.9207 \\
		\hline
		HoL0MS & 0.7919 & 0.8927 & 0.8855 & 0.8921 & 0.8946 & 0.9278 & 0.9439 & 0.9447 & 0.9444 & 0.9412 & 0.9287 & 0.7967 \\
		\hline
		Ours & \textbf{0.8058} & \textbf{0.9156} & \textbf{0.9149} & \textbf{0.9232} & \textbf{0.9191} & \textbf{0.9455} & \textbf{0.9589} & \textbf{0.9558} & \textbf{0.9527} & \textbf{0.9490} & \textbf{0.9305} & \textbf{0.9290} \\ 		
		\hline
	\end{tabular}
	\label{table_3D}
\end{table}

\begin{figure}[H]
	\centerline{
		\begin{tabular}{c@{}c@{}c@{}c@{}c@{}c@{}c}
			CCZ \ &
			\includegraphics[width=0.8in]{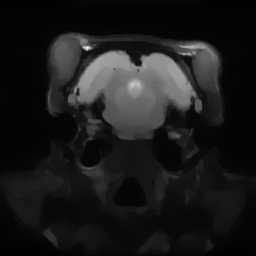} \ &
			\includegraphics[width=0.8in]{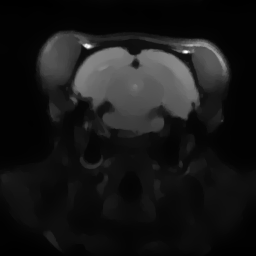} \ &		
			\includegraphics[width=0.8in]{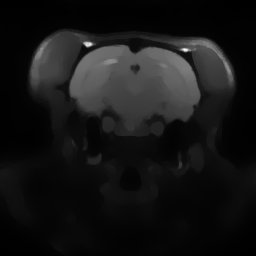} \ &
			\includegraphics[width=0.8in]{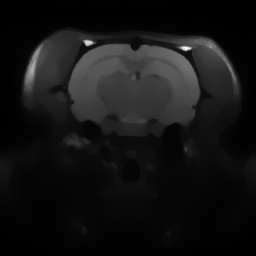} \ &
			\includegraphics[width=0.8in]{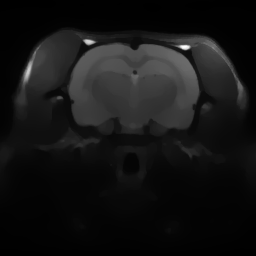} \ &
			\includegraphics[width=0.8in]{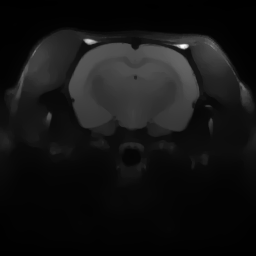} \\
			CNCS \ &
			\includegraphics[width=0.8in]{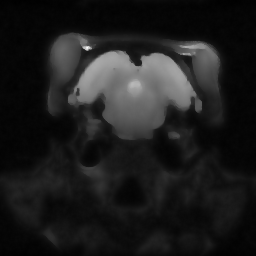} \ &
			\includegraphics[width=0.8in]{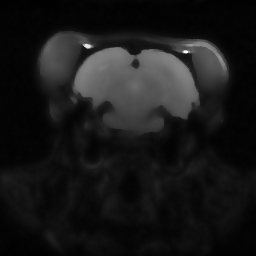} \ &		
			\includegraphics[width=0.8in]{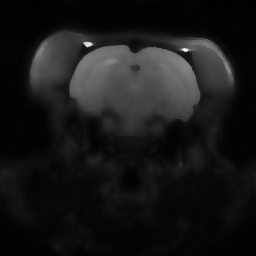} \ &
			\includegraphics[width=0.8in]{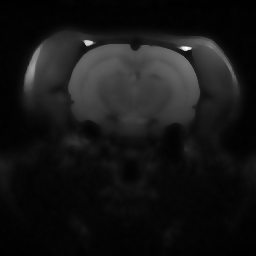} \ &
			\includegraphics[width=0.8in]{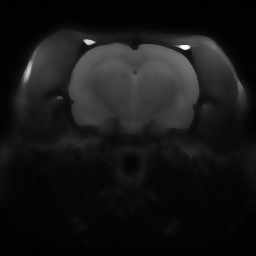} \ &
			\includegraphics[width=0.8in]{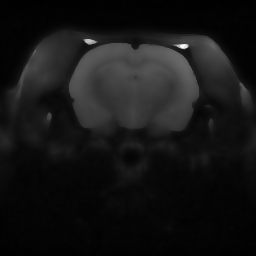} \\
			HoL0MS \ &
			\includegraphics[width=0.8in]{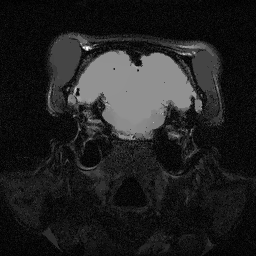} \ &
			\includegraphics[width=0.8in]{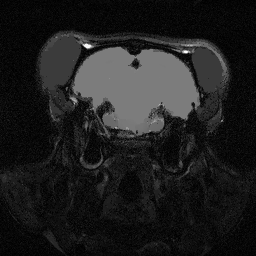} \ &		
			\includegraphics[width=0.8in]{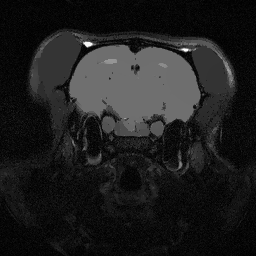} \ &
			\includegraphics[width=0.8in]{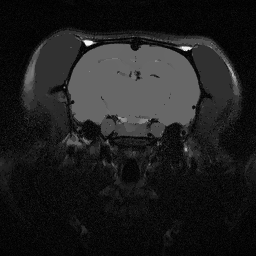} \ &
			\includegraphics[width=0.8in]{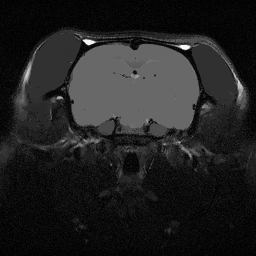} \ &
			\includegraphics[width=0.8in]{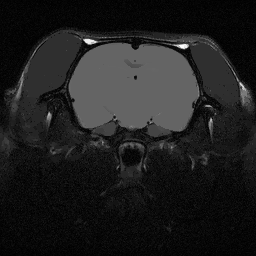} \\
			Ours \ &
			\includegraphics[width=0.8in]{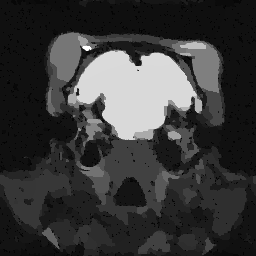} \ &
			\includegraphics[width=0.8in]{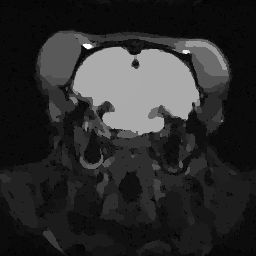} \ &		
			\includegraphics[width=0.8in]{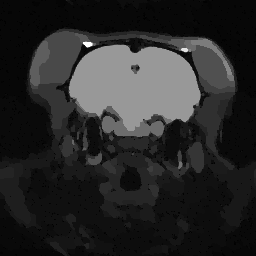} \ &
			\includegraphics[width=0.8in]{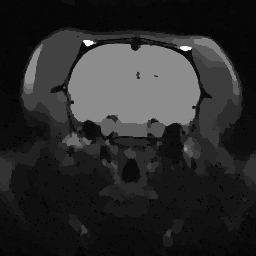} \ &
			\includegraphics[width=0.8in]{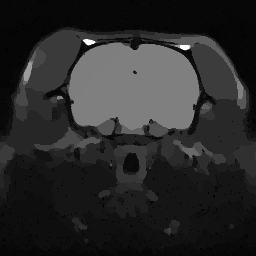} \ &
			\includegraphics[width=0.8in]{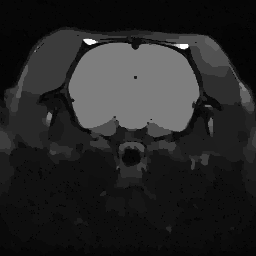} \\
			& slice1 & slice2 & slice3 & slice4 & slice5 & slice6 \\
			CCZ \ &
			\includegraphics[width=0.8in]{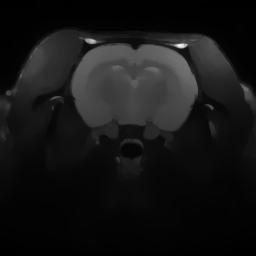} \ &
			\includegraphics[width=0.8in]{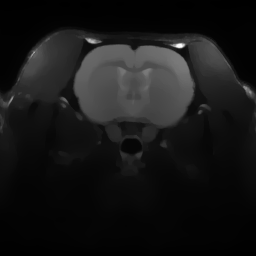} \ &		
			\includegraphics[width=0.8in]{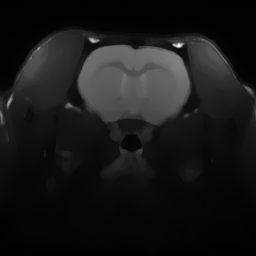} \ &
			\includegraphics[width=0.8in]{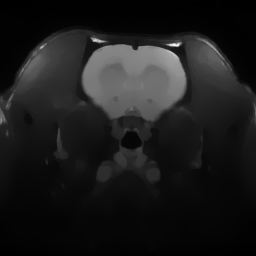} \ &
			\includegraphics[width=0.8in]{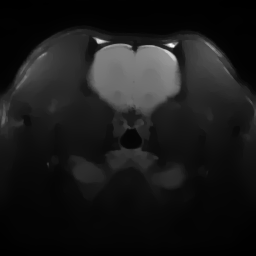} \ &
			\includegraphics[width=0.8in]{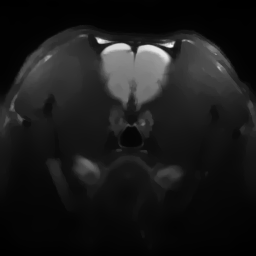} \\
			CNCS \ &
			\includegraphics[width=0.8in]{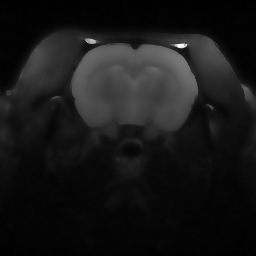} \ &
			\includegraphics[width=0.8in]{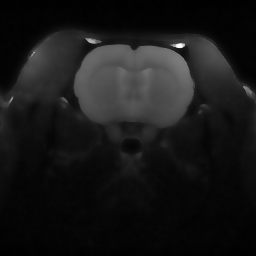} \ &		
			\includegraphics[width=0.8in]{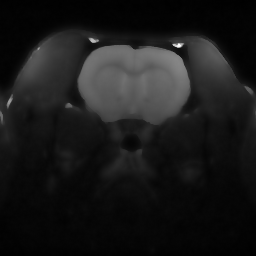} \ &
			\includegraphics[width=0.8in]{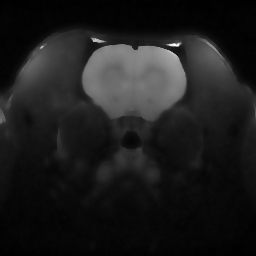} \ &
			\includegraphics[width=0.8in]{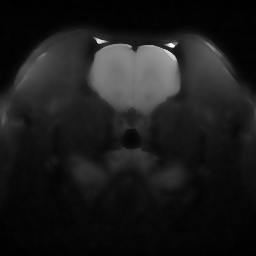} \ &
			\includegraphics[width=0.8in]{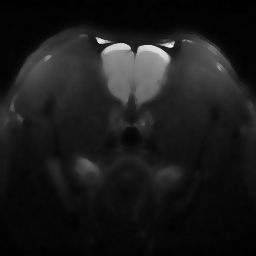} \\
			HoL0MS \ &
			\includegraphics[width=0.8in]{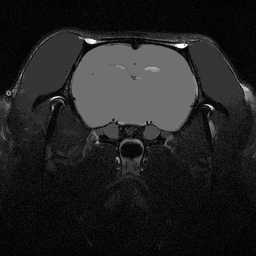} \ &
			\includegraphics[width=0.8in]{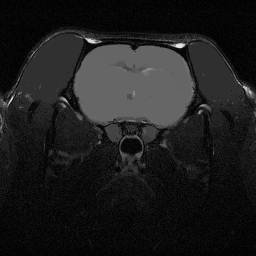} \ &		
			\includegraphics[width=0.8in]{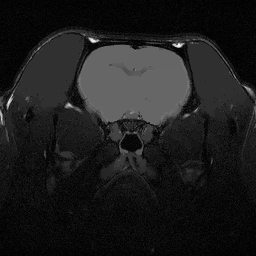} \ &
			\includegraphics[width=0.8in]{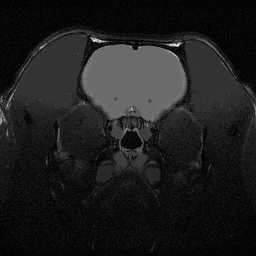} \ &
			\includegraphics[width=0.8in]{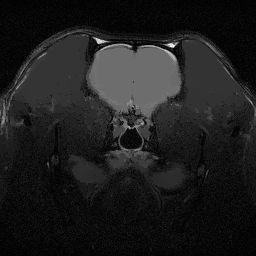} \ &
			\includegraphics[width=0.8in]{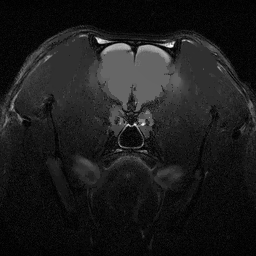} \\
			Ours \ &
			\includegraphics[width=0.8in]{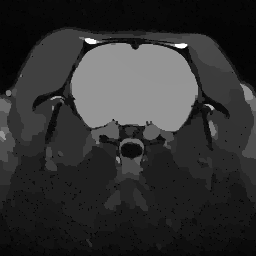} \ &
			\includegraphics[width=0.8in]{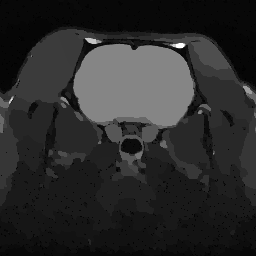} \ &		
			\includegraphics[width=0.8in]{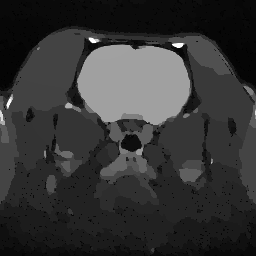} \ &
			\includegraphics[width=0.8in]{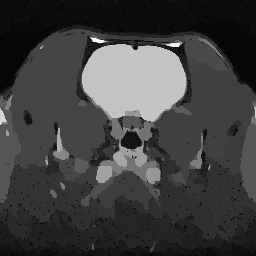} \ &
			\includegraphics[width=0.8in]{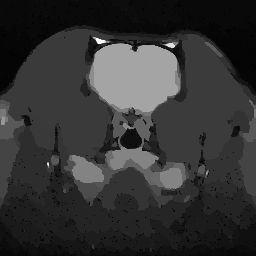} \ &
			\includegraphics[width=0.8in]{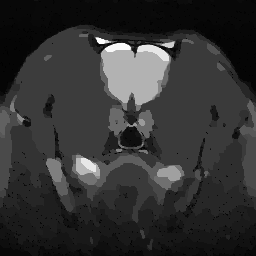} \\
			& slice7 & slice8 & slice9 & slice10 & slice11 & slice12 \\
		\end{tabular}
	}
	\caption{\small\sl Performance comparisons between different methods applied to a brain segmentation test for the brain MRI dataset in Figure \ref{fig3_testimages}: the inhomogeneity corrected images obtained in the first stage. Row 1-4: the inhomogeneity-corrected versions of slice 1-6 by CCZ, CNCS, HoL0MS and our method, respectively; Row 5-8: the inhomogeneity-corrected versions of slice 7-12 by CCZ, CNCS, HoL0MS and our method, respectively. The fine-tuned parameters for this dataset are: for CCZ, $(\lambda,\mu)=(20,5)$; for CNCS, $(\lambda,T)=(9,0.01)$; for HoL0MS, $(\alpha,\mu)=(0.01,0.01)$; and for ours, $(\alpha,\beta)=(0.4,8000)$. The CV values of the brain region in these corrected images are given in Table \ref{table_3D}.}
	\label{fig3_result1}
\end{figure}

\begin{figure}[H]
	\centerline{
		\begin{tabular}{c@{}c@{}c@{}c@{}c@{}c@{}c}
			CCZ \ &
			\includegraphics[width=0.8in]{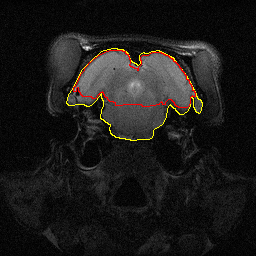} \ &
			\includegraphics[width=0.8in]{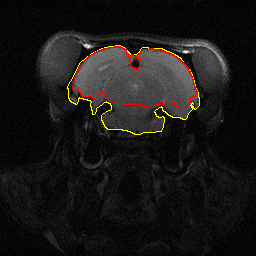} \ &		
			\includegraphics[width=0.8in]{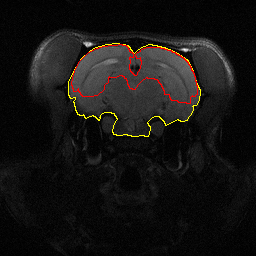} \ &
			\includegraphics[width=0.8in]{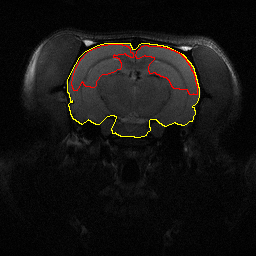} \ &
			\includegraphics[width=0.8in]{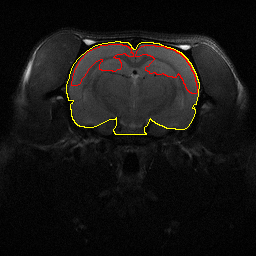} \ &
			\includegraphics[width=0.8in]{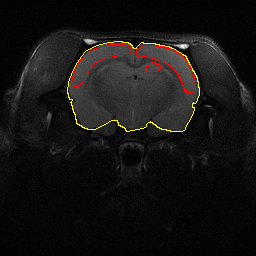} \\
			CNCS \ &
			\includegraphics[width=0.8in]{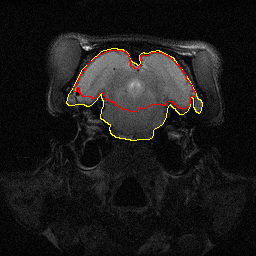} \ &
			\includegraphics[width=0.8in]{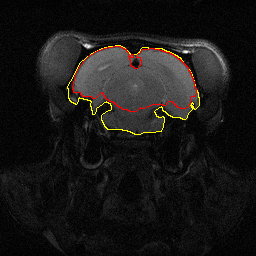} \ &		
			\includegraphics[width=0.8in]{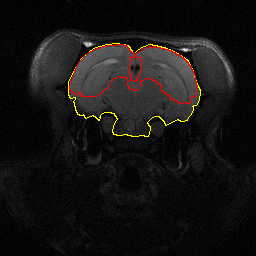} \ &
			\includegraphics[width=0.8in]{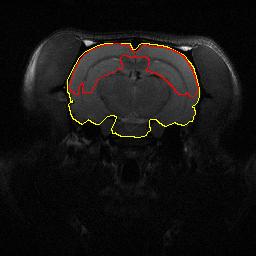} \ &
			\includegraphics[width=0.8in]{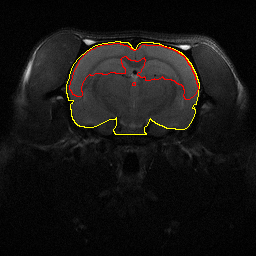} \ &
			\includegraphics[width=0.8in]{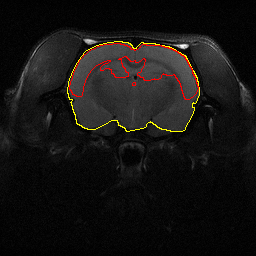} \\
			HoL0MS \ &
			\includegraphics[width=0.8in]{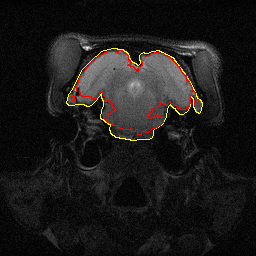} \ &
			\includegraphics[width=0.8in]{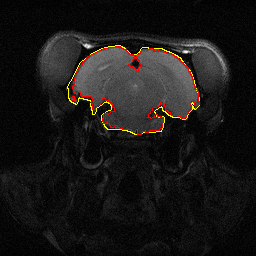} \ &		
			\includegraphics[width=0.8in]{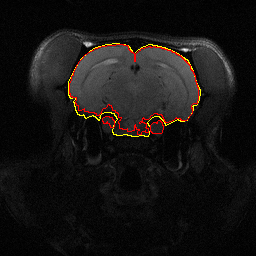} \ &
			\includegraphics[width=0.8in]{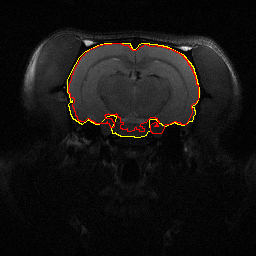} \ &
			\includegraphics[width=0.8in]{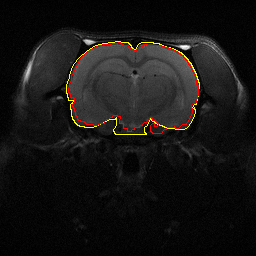} \ &
			\includegraphics[width=0.8in]{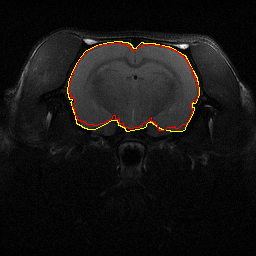} \\
			Ours \ &
			\includegraphics[width=0.8in]{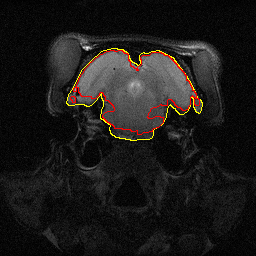} \ &
			\includegraphics[width=0.8in]{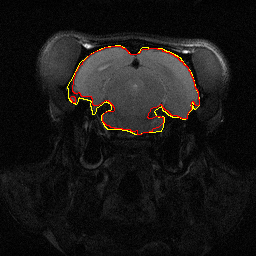} \ &		
			\includegraphics[width=0.8in]{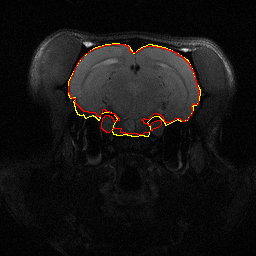} \ &
			\includegraphics[width=0.8in]{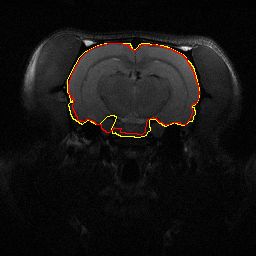} \ &
			\includegraphics[width=0.8in]{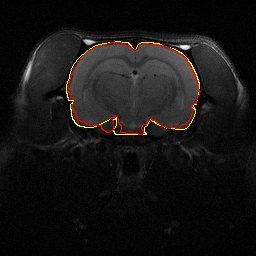} \ &
			\includegraphics[width=0.8in]{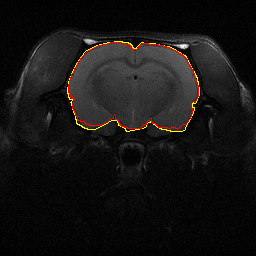} \\
			& slice1 & slice2 & slice3 & slice4 & slice5 & slice6 \\
			CCZ \ &
			\includegraphics[width=0.8in]{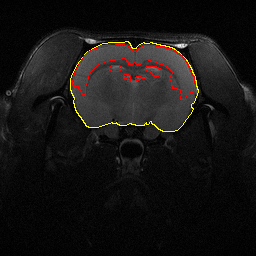} \ &
			\includegraphics[width=0.8in]{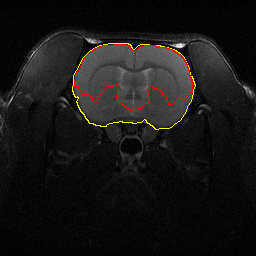} \ &		
			\includegraphics[width=0.8in]{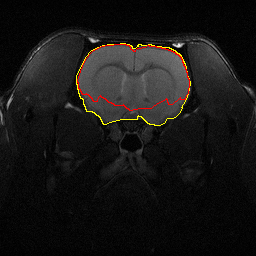} \ &
			\includegraphics[width=0.8in]{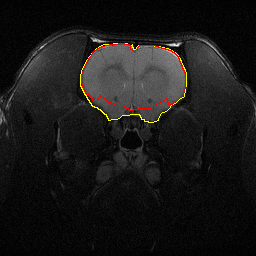} \ &
			\includegraphics[width=0.8in]{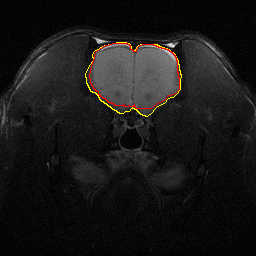} \ &
			\includegraphics[width=0.8in]{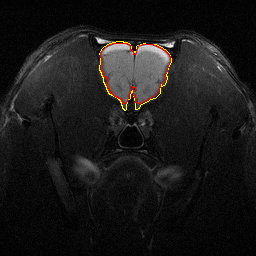} \\
			CNCS \ &
			\includegraphics[width=0.8in]{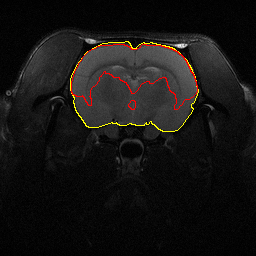} \ &
			\includegraphics[width=0.8in]{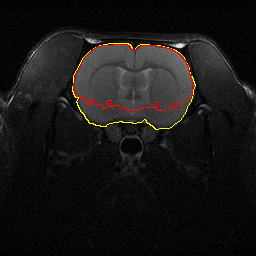} \ &		
			\includegraphics[width=0.8in]{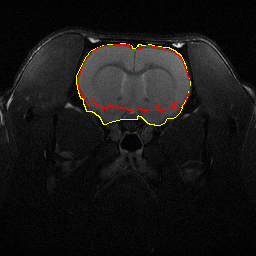} \ &
			\includegraphics[width=0.8in]{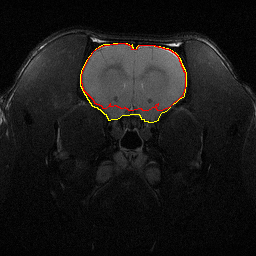} \ &
			\includegraphics[width=0.8in]{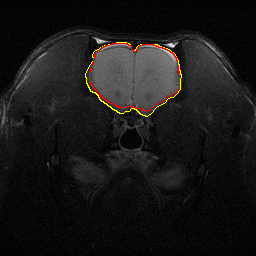} \ &
			\includegraphics[width=0.8in]{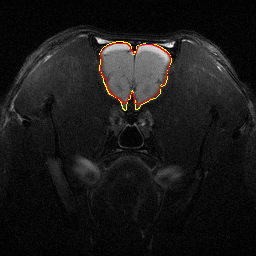} \\
			HoL0MS \ &
			\includegraphics[width=0.8in]{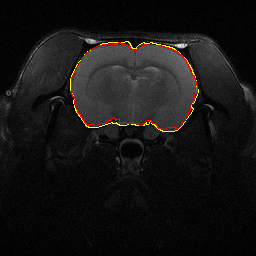} \ &
			\includegraphics[width=0.8in]{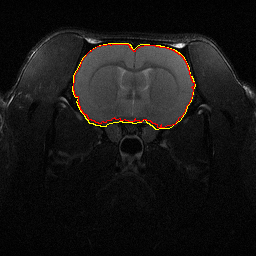} \ &		
			\includegraphics[width=0.8in]{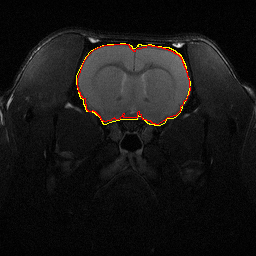} \ &
			\includegraphics[width=0.8in]{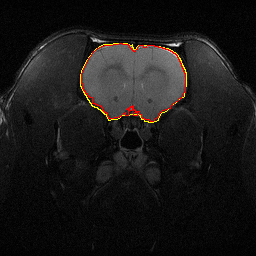} \ &
			\includegraphics[width=0.8in]{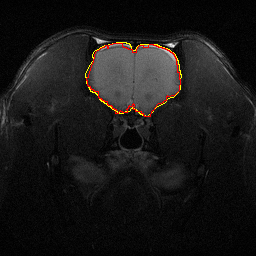} \ &
			\includegraphics[width=0.8in]{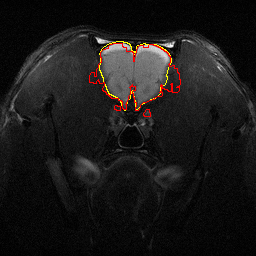} \\
			Ours \ &
			\includegraphics[width=0.8in]{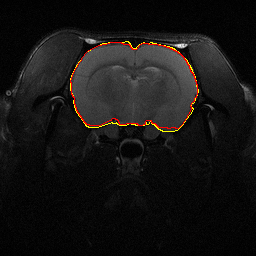} \ &
			\includegraphics[width=0.8in]{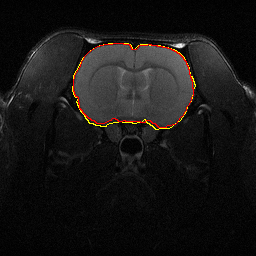} \ &		
			\includegraphics[width=0.8in]{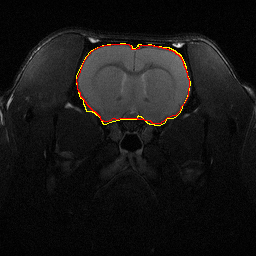} \ &
			\includegraphics[width=0.8in]{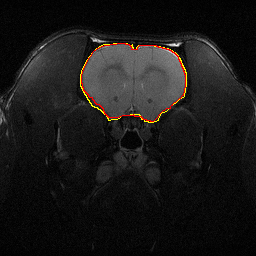} \ &
			\includegraphics[width=0.8in]{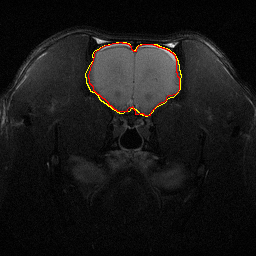} \ &
			\includegraphics[width=0.8in]{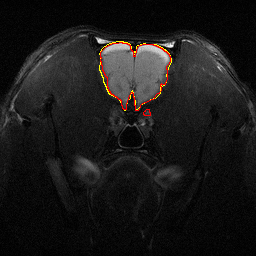} \\
			& slice7 & slice8 & slice9 & slice10 & slice11 & slice12 \\
		\end{tabular}
	}
	\caption{\small\sl Performance comparisons between different methods applied to a brain segmentation test for the brain MRI dataset in Figure \ref{fig3_testimages}: the brain segmentation results in the second stage. Row 1-4: the segmentation results of slice 1-6 by CCZ, CNCS, HoL0MS and our method, respectively; Row 5-8: the segmentation results of slice 7-12 by CCZ, CNCS, HoL0MS and our method, respectively. The yellow curves denote the ground truth location of the brain region, and the red curves denote the brain segmentation results by the compared methods. The corresponding JS values are given in Table \ref{table_3D}.}
	\label{fig3_result2}
\end{figure}

\subsection{Experimental summary}\label{experiment summary}
Let us summarize our experimental observations briefly. For images with weak intensity inhomogeneity, all the compared methods work quite well. For those with strong inhomogeneity, performance differences appear. Both CCZ and CNCS do not model the inhomogeneity explicitly, and thus naturally work poorly for the strongly inhomogeneous images. LIC is hard to be applicable to more than three phases segmentation in general images. L0MS can deal with strong inhomogeneity well, but isolated residual noise exists sometimes in its inhomogeneity-corrected images, which will influence its segmentation in the second stage. HoL0MS is a 3D high order variant of L0MS and is applicable to direct 3D segmentation. Both visual and quantitative comparisons on various datasets (with various levels of inhomogeneities, possible existence of noise, and multiphase segmentation tasks) show that our method always performs quite well. In most cases it gives better both inhomogeneity corrections and segmentations. Besides, our approach has proved convergence guarantee.

\section{Conclusion} \label{sec_conclusions}
We presented a new two-stage image segmentation method, where the key is to compute a piecewise constant approximate image suitable for the following thresholding operation. This is done by using a continuous but non-Lipschitz decomposition model. Motivated by a non-expansive property of the gradient support set for the non-Lipschitz term, we naturally extended previous iterative support shrinking algorithm to solve our decomposition model, with an ADMM inner solver. A lower bound theory for the iteration sequence has been given, showing that our algorithm can generate good approximate image components suitable for thresholding. The iterative sequence was also shown globally convergent to a stationary point of the original objective function in the decomposition model. Our method not only works very well for homogeneous images, but also can deal with multiphase segmentation for images with intensity inhomogeneity and noise. Its effectiveness and good convergence properties have been demonstrated by a series of numerical experiments, as well as visual and quantitative comparisons.

\section{Appendix}\label{Append_KL property}
\begin{definition}(Subdifferentials \cite{rockafellar2009variational})\label{Append_subdifferential}
	Let $\sigma : \mathbb{R}^d \rightarrow (-\infty,+\infty]$ be a proper and lower semicontinuous function. The domain of
	$\sigma$ is defined as $\dom \sigma=\{u \in \mathbb{R}^d: \sigma(u)<+\infty\}$. For a point $u \in \dom \sigma$,
	\begin{enumerate}
		\item the regular subdifferential of $\sigma$ at $u$ is defined as
		$$
		\widehat{\partial}\sigma(u) =\left\{w\in \mathbb{R}^d: \lim_{v \neq u}\inf_{v \rightarrow u}\frac{\sigma(v)-\sigma(u)-\langle w,v-u\rangle}{\|v-u\|}\geq 0 \right\};
		$$
		\item  the subdifferential of $\sigma$ at $u$ is defined as
		$$
		\partial \sigma(u)=\{w\in \mathbb{R}^d: \exists u^k\rightarrow u,\sigma(u^k)\rightarrow \sigma(u) \mbox{ and }
		w^k \in \widehat{\partial}\sigma(u^k)\rightarrow w \mbox{ as } k\rightarrow \infty \}.
		$$
	\end{enumerate}
\end{definition}

\begin{remark}\label{remark_2}
	From Definition \ref{Append_subdifferential}, it is clear that, if $\sigma$ is differentiable at $u$,
		then $\widehat{\partial}\sigma(u) = \partial \sigma(u) = \{\nabla \sigma(u)\}$. We also call a point $u \in \mathbb{R}^d$ a critical point, if $0 \in \partial \sigma(u)$.
\end{remark}

\begin{definition}(Kurdyka-{\L}ojasiewicz (KL) property \cite{attouch2010proximal})
	\begin{enumerate}
		\item The function $\sigma: \mathbb{R}^d \rightarrow (-\infty,+\infty]$ is said to have the Kurdyka-{\L}ojasiewicz property at $\overline{u} \in \dom \partial \sigma:=
		\{u \in \mathbb{R}^d: \partial \sigma(u) \neq \emptyset\}$ if there exist $\eta \in (0,+\infty]$, a neighborhood $U$ of $\overline{u}$, and a continuous concave
		function $\psi :[0,\eta) \rightarrow (0,+\infty]$ such that
		\begin{enumerate}[(i)]
			\item  $\psi(0)=0$;
			\item  $\psi$ is continuously differentiable on $(0,\eta)$;
			\item  for all $s \in (0,\eta)$, $\psi'(s)>0$;
			\item  for all $u \in U \cap \{v \in \mathbb{R}^d: \sigma(\overline{u}) < \sigma(v) < \sigma(\overline{u})+ \eta\}$,
			the Kurdyka-{\L}ojasiewicz (KL) inequality holds:
			$$
			\psi'(\sigma(u)-\sigma(\overline{u})) \dis (0,\partial \sigma(u)) \geq 1,
			$$
			where $\dis (0, \partial \sigma(u)):= \inf\{\|v\|: v \in \partial \sigma(u)\}$.
		\end{enumerate}
	\end{enumerate}
\end{definition}

A function $\sigma$ is called a KL function, if $\sigma$ satisfies the KL property at each point of $\dom \partial \sigma$. A rich class of KL functions of great interests are in a so-called o-minimal structure defined in \cite{van1996geometric}. The following definition is from \cite[Definition 4.1]{attouch2010proximal}.

\begin{definition}(o-minimal structure on $\mathbb{R}$)
	Let $\mathscr{O} = \{\mathscr{O}_n\}_{n \in \mathbb{N}}$ such that each $\mathscr{O}_n$ is a collection of subsets of $\mathbb{R}^n$. The family $\mathscr{O}$ is an
	o-minimal structure on $\mathbb{R}$, if it satisfies the following axioms:
	\begin{enumerate}[(i)]
		\item Each $\mathscr{O}_n$ is a boolean algebra. Namely $\emptyset \in \mathscr{O}_n$ and for each $A,B$ in $\mathscr{O}_n$, $A \cup B$, $A \cap B$, and
		$\mathbb{R}^n \setminus A$ belong to $\mathscr{O}_n$.
		\item For all $A$ in $\mathscr{O}_n$, $A \times \mathbb{R}$ and $\mathbb{R} \times A$ belong to $\mathscr{O}_{n+1}$.
		\item For all $A$ in $\mathscr{O}_{n+1}$, $\Pi (A):= \{(x_1,\ldots,x_n)\in \mathbb{R}^n: (x_1,\ldots,x_n,x_{n+1}) \in A\}$ belongs to $\mathscr{O}_{n}$.
		\item For all $i \neq j$ in $\{1,2,\ldots,n\}$, $\{(x_1,\ldots,x_n) \in \mathbb{R}^n: x_i = x_j\}$ belongs to $\mathscr{O}_n$.
		\item The set $\{(x_1,x_2) \in \mathbb{R}^2: x_1<x_2\}$ belongs to $\mathscr{O}_2$.
		\item The elements of $\mathscr{O}_1$ are exactly finite unions of intervals.
	\end{enumerate}
\end{definition}

Let $\mathscr{O}$ be an o-minimal structure on $\mathbb{R}$. We call a set $A \subseteq \mathbb{R}^n$ definable on $\mathscr{O}$ if $A \in \mathscr{O}_n$,
and a map $f: \mathbb{R}^n \rightarrow \mathbb{R}^m$ definable on $\mathscr{O}$ if its graph $\{(x,y) \in \mathbb{R}^n \times \mathbb{R}^m: y \in f(x)\}$ is definable on $\mathscr{O}$. A definable function is a special definable map. Some elementary properties of definable functions \cite{attouch2010proximal}\cite{Chao2018An} are as follows.
\begin{enumerate}[(i)]
	\item compositions of definable functions are definable;
	\item finite sums of definable functions are definable;
	\item indicator functions of definable sets are definable.	
\end{enumerate}

We have a very useful class of o-minimal structure, i.e., the log-exp structure \cite[Example 2.5]{van1996geometric}. By this, the following functions are all definable:
\begin{enumerate}[(1)]
	\item semi-algebraic functions \cite[Definition 5]{bolte2014proximal}, such as real polynomial functions, and $f: \mathbb{R} \rightarrow \mathbb{R}$ defined by
	$x \mapsto |x|$.
	\item $x^r : \mathbb{R} \rightarrow \mathbb{R}$ defined by
	$$
	a \mapsto \begin{cases}
	a^r, & a >0 \\
	0, & a \leq 0,
	\end{cases}
	$$
	where $r \in \mathbb{R}$.
\end{enumerate}

We know that any proper lower semicontinuous function definable on an o-minimal structure is a KL function; see \cite{bolte2007clarke} and \cite[Theorem 14]{attouch2010proximal}. For $F(u,v)$ in this paper, $\|f-u-v\|^2$, $\|D_i u\|$, $\|Hv\|^2$ and $\|v\|^2$ are all semi-algebraic functions. In addition, from examples (1)(2) and the elementary properties (\romannumeral1)(\romannumeral2) of definable functions, we know that $F(u,v)$ is definable. Thus $F(u,v)$ is a KL function.

\section*{Acknowledgments}
We greatly appreciate the authors of \cite{cai2013two}, \cite{duan2015l_}, \cite{chang2017new}, \cite{Chan2018Convex} and \cite{li2011level} for sharing their source codes. We are also very grateful to the anonymous reviewers for their valuable comments and suggestions. This work is supported in part by the Key Laboratory for Medical Data Analysis and Statistical Research of Tianjin~(C.~Wu,Y.~Xue), NSFTJ-17JCYBJC15800~(Y.~Xue), NSFC~11871035~(C.~Wu), NSFC~11531013~(C.~Wu) and Recruitment Program of Global Young Experts~(C.~Wu).

\bibliographystyle{unsrt}

\end{document}